\documentclass{amsart}
\usepackage{fullpage}
\usepackage{pictex}
\usepackage{amsmath}
\usepackage{amssymb}
\usepackage{amsfonts}
\usepackage{amsopn}
\usepackage{epsfig}
\usepackage{amsfonts}
\usepackage{latexsym}
\usepackage{bm}
\usepackage{stmaryrd}
\usepackage{amsthm}
\usepackage{mathrsfs}
\usepackage{stdclsdv}

\newtheorem{theorem}{Theorem}[section]
\newtheorem{lemma}[theorem]{Lemma}
\newtheorem{proposition}[theorem]{Proposition}

\newtheorem{remark}{Remark}
\newtheorem{example}{Example}

\newcommand{\R}{\mathbb R} \newcommand{\Z}{\mathbb Z}
\newcommand{\N}{\mathbb{N}} \newcommand{\Q}{\mathbb{Q}}

\usepackage{tikz}

\title{Sharpening Borel's result in Diophantine approximation}

\author{Savita S.\ Adhin, Ayreena Bakhtawar \and Cor Kraaikamp}
\newcommand{\Addresses}{{% additional braces for segregating \footnotesize
  \bigskip
  \footnotesize

Savita S.\ Adhin, \textsc{
Delft University of Technology, EWI (DIAM), Mekelweg 4, 2628 CD Delft, the Netherlands} 
\par\nopagebreak
\textit{E-mail address}: \texttt{sadhin@tudelft.nl}

A.~Bakhtawar, \textsc{
Institute of Mathematics, Polish Academy of Sciences, ul.  Sniadeckich 8, 00-656
Warszawa, Poland}\par\nopagebreak
  \textit{E-mail address}: \texttt{abakhtawar@impan.pl; ayreena.bakhtawar@gmail.com}

  \medskip

C.~Kraaikamp\textsc{
Delft University of Technology, EWI (DIAM), Mekelweg 4, 2628 CD Delft, the Netherlands} 
\par\nopagebreak
\textit{E-mail address}: \texttt{c.kraaikamp@tudelft.nl}
}
}

\begin{document}
%\maketitle

\begin{abstract}  %\noindent 
In this paper, we refine Borel's 1903 result in Diophantine approximation by providing sharper bounds for the minimum of three consecutive approximation coefficients $\Theta_n(x)$, defined for any real number $x$ with regular continued fraction (RCF) expansion $x=[0;a_1,a_2,\dots]$ as $\Theta_n = q_n^2\left| x-\frac{p_n}{q_n}\right|$. Here $\frac{p_n}{q_n}$ is the $n$th RCF convergent of $x$. Borel's result states that for all (irrational) $x$ and all $n\in\N$,
$$
\min \left\{ \Theta_{n-1}(x),\Theta_n(x),\Theta_{n+1}(x)\right\} \leq \frac{1}{\sqrt{5}}.
$$
We focus on the situation where $a_{n+1}=1$, since otherwise a result by F.~Bagemihl and J.R.~McLaughlin from 1966 implies that the Borel-bound $1/\sqrt{5}$ can already be improver to $1/\sqrt{8}$. 
\end{abstract}

\maketitle

\section{Introduction}\label{section1}

Two classic results in Diophantine approximation are Vahlen's well-known theorem of 1895, \cite{[V]}, and Borel's 1903 theorem, \cite{[Bor1],[Bor2]}. Let\footnote{Without loss of generality we will assume in this paper that $x\in [0,1)$.} $x\in\R$ have \emph{regular continued fraction} (RCF) expansion 
\begin{equation}\label{RCFexpansion}
x = a_0 + \frac{1}{a_1+ \frac{\displaystyle 1}{\displaystyle a_2+\ddots +\frac{1}{a_n+\ddots}}},
\end{equation}
which is denoted by $x=[a_0;a_1,a_2,\dots]$, where $a_0\in\Z$, such that $x-a_0\in [0,1)$, and\footnote{Recall that the RCF-expansion of $x\in\R$ is finite if and only if $x\in\Q$. In that case $x$ has two RCF-expansions. In all other cases the RCF-expansion of $x\in\R$ is unique.}  $a_i\in\N$, for $n\geq 1$. Furthermore, by finite truncation in~\eqref{RCFexpansion}, we get the sequence of \emph{regular continued fraction convergents} $(p_n/q_n)$ of $x$, given by
\begin{equation}\label{RCFconvergents}
\frac{p_n}{q_n} = a_0 + \frac{1}{a_1+ \frac{\displaystyle 1}{\displaystyle a_2+\ddots +\frac{1}{a_n}}} = [a_0;a_1,\dots,a_n],
\end{equation}
where $p_n,q_n\in\Z$, $q_n>0$, and $\text{gcd}\{ p_n,q_n\}=1$, for $n\geq 1$. Then~\eqref{RCFexpansion} expresses that
$$
\lim_{n\to\infty} \frac{p_n}{q_n} = x.
$$
For a proof and other information on continued fractions used in this paper, see, e.g.~\cite{[BvdPSZ], [DK], [Hen], [IK], [RS]}, but also the classic but still highly relevant books~\cite{[K],[P]}.

Define the \emph{approximation coefficients} $\Theta_n=\Theta_n(x)$ of $x\in\R$ by
$$
\Theta_n(x) = q_n^2\left| x-\frac{p_n}{q_n}\right| ,\quad \text{for $n\geq 1$},
$$
then Vahlen's theorem states that for $x\in\R$ and $n\geq 1$, we have
\begin{equation}\label{Vahlen}
\min \left\{ \Theta_{n-1}(x) ,\,\, \Theta_n(x) \right\} < \tfrac{1}{2}.
\end{equation}
In 1903, \'Emile Borel (\cite{[Bor1], [Bor2]}) showed, that if $x\in\R$, with three consecutive convergents $\tfrac{p_{n-1}}{q_{n-1}}$, $\tfrac{p_n}{q_n}$
and $\tfrac{p_{n+1}}{q_{n+1}}$, one has
\begin{equation}\label{Borel}
\min \left\{ \Theta_{n-1}(x),\Theta_n(x),\Theta_{n+1}(x)\right\} < \frac{1}{\sqrt{5}}.
\end{equation}
As $0\leq \Theta_n(x)<1$, for $n\geq 1$, and since the sequence of denominators $(q_n)_{n\geq}$ is growing monotonically and exponentially,~\eqref{Vahlen} and~\eqref{Borel}  show that RCF-algorithm yields very good rational approximations to (irrational, but also rational) numbers $x$. This is further underlined by a result of Legendre from 1798 (\cite{[L],[BJ]}). Let $A,B\in\Z$ with $B>0$, and $\text{gcd}\{ A,B\}=1$ be such that 
\begin{equation}\label{legendre}
B^2\left| x-\frac{A}{B}\right| < \frac{1}{2}.
\end{equation}
The rational number $A/B$ is an RCF convergent of $x$; there exists an $n\in\N$, such that $A=p_n$ and $B=q_n$. So \emph{if} we want to approximate an irrational number $x$ well by a rational number $A/B$ (such that~\eqref{legendre} holds), Legendre's result states that this rational number is an RCF-convergent of $x$.\smallskip\

Recently, Vahlen's result was sharpened by Jaroslav Han\u{c}l in~\cite{[H]}, by Han\u{c}l and Silvie Bahnerova in~\cite{[HB]}, and by Dinesh Sharma Bhattarai in 2023 (\cite{[B]}). Taking an approach very different from~\cite{[B], [H], [HB]}, Vahlen was extensively sharpened by the second and third authors in~\cite{[BK]}.\smallskip

More recently, Borel's result was sharpened by Han\u{c}l and Radhakrishnan Nair in~\cite{[HN]} who showed that if we replace the $<$ sign in~\eqref{Borel} by $\leq$,  the constant $\sqrt{5}$ can be replaced by $\sqrt{5}+\frac{4-5\sqrt{5}+\sqrt{61}}{2q^2}$. In this paper, we also will sharpen Borel's result. Apart from when $x$ equals the (small) golden mean $g$, where\footnote{Note that $gG=1$, $g^2=1-g$, $G^2=1+G$, and $1/g=G$.}
\begin{equation}\label{goldenmean}
g=\frac{\sqrt{5}-1}{2},\quad \text{and}\quad G=\frac{\sqrt{5}+1}{2},
\end{equation}
the approach will be different from that of Han\u{c}l and Nair, and more along the lines of~\cite{[BK]}. Not only is the approach we follow very different from that of Han\u{c}l and Nair, also the results are different. Han\u{c}l and Nair give a bound which depends only on the denominator $q$ of one of the three convergents $\tfrac{p_{n-1}}{q_{n-1}}$, $\tfrac{p_n}{q_n}$, $\tfrac{p_{n+1}}{q_{n+1}}$ involved, and can be seen in some way as universal. However, when the partial quotient $a_{n+1} > 1$, their result is quite conservative, certainly for large $n$. In case $a_{n+1}=1$, we consider other partial quotients in order to improve upon Borel's result.\smallskip\

Before stating our results in Section~\ref{section3} and and proving our results in Section~\ref{section:combining}, in Sections~\ref{section2} and~\ref{section3} we introduce some tools already used in~\cite{[BK]}, but now these tools will be used in a more refined way. We will also review older refinements of Borel by M.~Fujiwara (\cite{[F]}), N.~Obrechkoff (\cite{[O]}), A.L.~Schmidt (\cite{[S]}), and F.~Bagemihl \& J.R.~McLaughlin (\cite{[BMc]}), and the ``converse property'' by Jingcheng Tong (\cite{[T]}).

\section{Using the Nakada natural extension of the RCF}\label{section2}
In 1981, Hitoshi Nakada (in~\cite{[N]}) obtained the \emph{natural extension} of the dynamical systems underlying his $\alpha$-expansions for all $\alpha\in [\tfrac{1}{2},1]$ (which also includes the RCF, which is $\alpha =1$), derived the invariant measures, and showed that these dynamical systems are ergodic. Nakada's natural extensions for the RCF played a major role in the development of the metrical theory of continued fractions since the early 1980ies. See Chapter~4 in~\cite{[IK]}.\smallskip\

\subsection{Nakada's natural extension for the RCF}\label{subsectionNakada} Let $T:[0,1)\to [0,1)$ be the so-called \emph{Gauss map}, defined by $T(0)=0$, and
$$
T(x) = \frac{1}{x}-\left\lfloor \frac{1}{x}\right\rfloor ,\quad \text{for $x\in (0,1)$}.
$$
Then one of the possible versions of Nakada's natural extension map $\mathcal{T}$ is the map $\mathcal{T}:[0,1)\times [0,1]\to [0,1]^2$, defined by
\begin{equation}\label{naturalextensionmap}
\mathcal{T}(x,y) = \left( T(x),\frac{1}{\left\lfloor \tfrac{1}{x}\right\rfloor + y}\right),\quad \text{if $(x,y)\in (0,1)\times [0,1]$},
\end{equation}
and $\mathcal{T}(0,y) = (0,y)$, for $y\in [0,1]$. Nakada showed in 1981 in~\cite{[N]} that $(\Omega,\mathcal{B},\bar{\mu},\mathcal{T})$ forms an ergodic system, where $\mathcal{B}$ is the collection of Borel sets of $\Omega$, and $\bar{\mu}$ is a $\mathcal{T}$-invariant probability measure, with density $d(x,y)$, where
$$
d(x,y) = \frac{1}{\log 2} \frac{1}{(1+xy)^2},\quad \text{for $(x,y)\in\Omega$},
$$
and $d(x,y)=0$ elsewhere. In fact, in~\cite{[N]} Nakada obtained stronger mixing properties than ergodicity of the dynamical system $(\Omega,\mathcal{B},\bar{\mu},\mathcal{T})$. However, for our purposes, ergodicity suffices.\medskip\

For $x\in (0,1)$ and $n\geq 1$, with RCF-convergents $(\tfrac{p_n}{q_n})$,  we define 
\begin{equation}\label{future}
t_n=t_n(x) := T^n(x),
\end{equation}
as the \emph{future of $x$ at time} $n$, and 
\begin{equation}\label{past}
v_n=v_n(x) := \frac{q_{n-1}}{q_n},
\end{equation}
as the \emph{past of $x$ at time} $n$. Note that if $x=[0;a_1,a_2,\dots]$, we have $t_n=[0;a_{n+1},a_{n+2},\dots]$, and from the well-known recurrence relations for the $q_n$ we furthermore have $v_n=[0;a_n,a_{n-1},\dots,a_1]$. See pages 7--10 in~\cite{[Sch]}. It follows from the definition of $\mathcal{T}$ in~\eqref{naturalextensionmap} that
$$
\mathcal{T}^n(x,0) = (t_n,v_n),\quad \text{for $n\geq 0$}.
$$
\begin{remark}\label{RemarkOnJager}{\rm
For the optimality of the improved Borel-bounds $B_{\ell,h}$ we will introduce in Section~\ref{section3} and study in Section~\ref{section:combining}, it is important to remark that Henk Jager showed in ~\cite{[J]} that for Lebesgue almost all $x$, the sequence $(t_n,v_n)_{n\geq 0}$ behaves asymptotically like a generic sequence. I.e., for Lebesgue a.e.\ $x$, the sequence $(t_n,v_n)_{n\geq 0}$ will visit any Borel-set $A\subset\Omega$ with asymptotic frequency $\bar{\mu}(A)$. See also~\cite{[BJW]}, where this result, although not explicitly stated as such, formed one of the main ingredients of the proof of the so-called \emph{Doeblin-Lenstra conjecture}. See also~\cite{[DK],[IK]} for more details.}\hfill$\triangle$
\end{remark}

A classical result is now, that
\begin{equation}\label{thetan}
\Theta_n(x) = \frac{t_n}{1+t_nv_n},\quad \text{for $n\geq 1$}.
\end{equation}
For a proof of~\eqref{thetan}, see~\cite{[DK], [IK], [K]}, but also (2) in~\cite{[B]} and (3.2) in~\cite{[H]}. In fact, from~\eqref{thetan} and the definition of the Gauss map $T$, it is easily found that\footnote{From now on we will usually suppress the dependence of $\Theta_n(x)$ on $x$.}
\begin{equation}\label{thetan-1}
\Theta_{n-1}(x) = \frac{v_n}{1+t_nv_n},\quad \text{for $n\geq 1$}.
\end{equation}
In view of~\eqref{thetan} and~\eqref{thetan-1}, Henk Jager and the third author introduced and studied in~\cite{[JK]} the map $\Psi: \Omega := [0,1]\times [0,1]\to \R^2$, defined by
\begin{equation}\label{PsiMap}
\Psi (t,v) = \left( \frac{v}{1+tv},\frac{t}{1+tv}\right),\quad \text{for $(t,v)\in\Omega$}.
\end{equation}
In~\cite{[JK]}, it is shown that $\Psi (\Omega)$ is the  triangle $\Delta$ in $\R^2$ with vertices $(0,0)$, $(0,1)$, and $(1,0)$. A trivial but handy observation from~\eqref{PsiMap} is that if $(t,v)\in\Omega$ is above/on/under the diagonal $v=t$, the image $\Psi (t,v)$ of $(t,v)$ under $\Psi$ is under/on/above the diagonal $\beta=\alpha$ in $\Delta$. Since $(\Theta_{n-1},\Theta_n) = \Psi (t_n,v_n)\in\Delta$, for $n\geq 1$, we find that $\Theta_{n-1} + \Theta_n < 1$, which implies Vahlen's result 
$$
\min \{ \Theta_{n-1},\Theta_n\} < \tfrac{1}{2}.
$$
%\smallskip\

Furthermore, off a set of Lebesgue measure zero, the map $\Psi :\Omega\to\Delta$ is bijective. If we define the map $F:\Delta\to\Delta$ by 
\begin{equation}\label{Fmap}
F=\Psi\circ \mathcal{T}\circ \Psi^{-1},
\end{equation}
we find for two consecutive $\Theta_{n-1}$ and $\Theta_n$ that
$$
F(\Theta_{n-1},\Theta_n) = \left( \Theta_n,\Theta_{n+1}\right) ,\quad \text{for $n\geq 1$}.
$$
As a result, one can derive the formula of  W.B.~Jurkat and A.~Peyerimhoff (\cite{[JP]})
\begin{equation}\label{thetasA}
\Theta_{n+1} = \Theta_{n-1} +a_{n+1}\sqrt{1-4\Theta_{n-1}\Theta_n} - a_{n+1}^2\Theta_n.
\end{equation}
See also~\cite{[JK]}, where~\eqref{thetasA} was used to obtain, in a simple way, a generalization of Borel's celebrated result~(\ref{Borel}). This generalization of Borel's result was previously obtained among others by M.~Fujiwara (\cite{[F]}), N.~Obrechkoff (\cite{[O]}), A.L.~Schmidt (\cite{[S]}), and by F.~Bagemihl and J.R.\ McLaughlin (\cite{[BMc]})
\begin{equation}\label{improvementBorel}
\min \{ \Theta_{n-1},\Theta_n,\Theta_{n+1}\} \leq \frac{1}{\sqrt{a_{n+1}^2+4}},\quad \text{for $n\geq 1$}.
\end{equation}
As $\frac{1}{\sqrt{a_{n+1}^2+4}}\leq \frac{1}{\sqrt{5}}$, \eqref{improvementBorel} immediately implies Borel's result~\eqref{Borel}. In 1983, Jingcheng Tong obtained in~\cite{[T]} the ``conjugate property'' of~(\ref{improvementBorel})
\begin{equation}\label{Tong}
\max \{ \Theta_{n-1},\Theta_n,\Theta_{n+1}\} \geq \frac{1}{\sqrt{a_{n+1}^2+4}},\quad \text{for $n\geq 1$}.
\end{equation}

\subsection{Easy proofs of~\eqref{improvementBorel} and~\eqref{Tong}}\label{subsec:valensconstants} In this section, we briefly recall the proofs of~\eqref{improvementBorel} and~\eqref{Tong} from~\cite{[JK]}, as the approach from~\cite{[JK]} is extensively used in this paper. In view of~\eqref{thetan} and~\eqref{thetan-1}, we define for $a\in\N$ subsets $V_a$ and $H_a$ of $\Omega$ as follows
$$
V_a = \left\{ (x,y)\in\Omega\,\, \Big{|}\,\, \frac{1}{a+1} < x\leq \frac{1}{a}\right\} ,\quad \text{and}\quad 
H_a = \left\{ (x,y)\in\Omega\,\, \Big{|}\,\, \frac{1}{a+1} < y\leq \frac{1}{a}\right\}.
$$
One can easily show that $\mathcal{T}(V_a)=H_a$ for all $a\in\N$. Note that the following conditions hold
\begin{eqnarray*}
\mathcal{T}^n(t,v)\in V_a &\Leftrightarrow & a_{n+1}=a,\,\, n\geq 0
\end{eqnarray*}
and
\begin{eqnarray*}
\mathcal{T}^n(t,v)\in H_a &\Leftrightarrow & a_n=a,\,\, n\geq 1.
\end{eqnarray*}
See also Figure~\ref{figure1}, where the sets $V_a$ and their image under $\Psi$ are given for $a=1,2,3$.

\begin{remark}\label{remarkonhorizontalandverticallines}{\rm
Note that if $c\in [0,1]$ is a constant, it is easily seen that $\Psi$ maps the vertical line segment $t=c$, $v\in [0,1]$, to the intersection of the line $\beta = -c^2\alpha +c$ with $\Delta$. Also it is easily seen that $\Psi$ maps the horizontal line segment $t\in [0,1]$, $v=d\in (0,1]$ fixed, to the intersection of the line $\beta = -\tfrac{1}{d^2}\alpha +\tfrac{1}{d}$ with the triangle $\Delta$.}\hfill$\triangle$
\end{remark}

Due to Remark~\ref{remarkonhorizontalandverticallines}, for $a\geq 2$ we find that $\Psi (V_a)$ is a quadrilateral in $\Delta$, with vertices 
$$
\Psi \left(\tfrac{1}{a},0\right) = \left( 0,\tfrac{1}{a}\right),\quad \Psi \left( \tfrac{1}{a+1},0\right) = \left( 0,\tfrac{1}{a+1}\right),\quad \Psi \left( \tfrac{1}{a+1},1\right) = \left( \tfrac{a+1}{a+2},\tfrac{1}{a+2}\right),
$$
and ${\displaystyle \Psi \left( \tfrac{1}{a},1\right) = \left( \tfrac{a}{a+1},\tfrac{1}{a+1}\right)}$. For $a=1$ we have that $\Psi (V_a)$ is a triangle in $\Delta$, with vertices 
$$
\left( 0,\tfrac{1}{2}\right),\quad \left( \tfrac{2}{3},\tfrac{1}{3}\right),\quad \text{and $(0,1)$}.
$$
\begin{figure}[h]
$$
\beginpicture
    \setcoordinatesystem units <0.4cm,0.4cm>
    \setplotarea x from -15 to 15, y from 0 to 12
    \putrule from 3 0 to 3 12
    \putrule from 3 0 to 15 0
    \put {$0$} at 2.7 -0.5
    \put {$0$} at -15.3 -0.5
    \put {$1$} at 15.3 -0.5
    \put {$1$} at -2.7 -0.5
    \put {$1$} at 2.5 12.3
    \put {$1$} at -15.3 12.3
    \putrule from -15 0 to -3 0
    \putrule from -15 0 to -15 12
    \putrule from -3 0 to -3 12
    \putrule from -15 12 to -3 12
    \putrule from -9 0 to -9 12
\putrule from -11 0 to -11 12
\putrule from -12 0 to -12 12

    \put {$\tfrac{1}{2}$} at 2.5 6
    \put {$\tfrac{1}{2}$} at -9 -0.7
    \put {$\tfrac{1}{3}$} at 2.5 4
    \put {$\tfrac{1}{3}$} at -11 -0.7
    \put {$\tfrac{1}{4}$} at 2.5 2.9
    \put {$\tfrac{1}{4}$} at -12 -0.7
    \put {$\Psi(V_1)$} at 6 7
    \put {$\Psi(V_2)$} at 6 4.4
    \put {$\Psi(V_3)$} at 6 3.2
    \put {$V_1$} at -6 6
    \put {$V_2$} at -10 6
    \put {$V_3$} at -11.5 6 

\setlinear \plot
3 12 15 0
/
\setlinear \plot
3 6 11 4
/
\setlinear \plot
3 4 12 3
/
\setlinear \plot
3 3 12.6 2.4
/

\setdots
\putrule from 6 0 to 6 3
\putrule from -15 6 to -12 6

\endpicture
$$ \caption[triangle]{The natural extension $\Omega$ of the RCF (left), and its image $\Delta$ under $\Psi$ (right)}\label{figure1}
\end{figure}
Thus $\Psi (V_a)$ is bounded by the lines: $\alpha = 0$, $\alpha+\beta =1$, $\beta = -\frac{\alpha}{a^2}+\frac{1}{a}$ (this is the `top-line' of $\Psi (V_a)$), and $\beta = -\frac{\alpha}{(a+1)^2}+\frac{1}{a+1}$ (this is the `bottom-line' of $\Psi (V_a)$, and the `top-line' of $\Psi (V_{a+1})$); see Figure~\ref{figure1}. Also note, that $\Psi (H_a)$ is the reflection of $\Psi (V_a)$ in the line $\beta = \alpha$. So $\Psi (H_a)$ is bounded by the lines $\beta =0$, $\alpha +\beta =1$, the ``right-hand line'' $\beta = -a^2\alpha + a$, and ``left-hand line'' $\beta = -(a+1)^2\alpha+(a+1)$.\medskip\

The essence of the proof of~\eqref{improvementBorel} and~\eqref{Tong} in~\cite{[JK]} is that for each $a\in\N$ there is a unique quadratic irrational number $\xi_a\in \big( \tfrac{1}{a+1},\tfrac{1}{a}\big]$, which is a fixed point\footnote{The bar indicates the period, which is for $\xi_a$ of length 1.} of the Gauss-map $T$: 
$$
\xi_a = [0; \overline{a}] = \frac{-a+\sqrt{a^2+4}}{2}.
$$
Consequently, by definition of $\mathcal{T}$ we see that $(\xi_a,\xi_a)$ is a fixed point of $\mathcal{T}$, and since
$$
\Psi \left( \xi_a,\xi_a \right) = \left( \frac{1}{\sqrt{a^2+4}}, \frac{1}{\sqrt{a^2+4} } \right),\quad \text{for all $a\in\N$},
$$
setting
\begin{equation}\label{eta_a}
\eta_a= \frac{1}{\sqrt{a^2+4}},\quad \text{for all $a\in\N$},
\end{equation}
we see that by definition~\eqref{Fmap} of $F$ the point $(\eta_a,\eta_a)$ is a fixed point of $F$.\smallskip\

In view of~\eqref{thetasA}, we now consider on each $V_a^*=\Psi (V_a)$, for $a\in\N$, the map $f:V_a^*\to [0,1]$, defined by
\begin{equation}\label{fmap}
f(\alpha,\beta) = \alpha + a\sqrt{1-4\alpha\beta} - a^2\beta,\quad \text{for $(\alpha,\beta)\in V_a*$}.
\end{equation}
One can show that
\begin{equation}\label{derivatesarenegative}
\frac{\partial}{\partial \alpha} f(\alpha,\beta) < 0,\quad \text{and that}\quad \frac{\partial}{\partial \beta} f(\alpha,\beta) < 0
\end{equation}
(the last one is trivial, the first one is some work). From this and the fact that
$$
f(\eta_a,\eta_a) = f\left( \frac{1}{\sqrt{a^2+4}}, \frac{1}{\sqrt{a^2+4}}\right) = \frac{1}{\sqrt{a^2+4}},
$$
we immediately find~\eqref{improvementBorel} and~\eqref{Tong}. For more details, see~\cite{[JK]}. 

\section{Generalizing Borel's result}\label{section3}
It follows from~\eqref{improvementBorel} that whenever $a_{n+1}\geq 2$, one already has a considerable improvement over Borel's result~\eqref{Borel}, and this was also noted in~\cite{[HN]}. In that case the constant $1/\sqrt{5}=0.447213595\dots$ is replaced by a constant at most $1/\sqrt{8}=0.35355339\cdots$. Therefore, in order to improve upon Borel's result, we will assume in the rest of this paper that $a_{n+1}=1$. We will also assume that $a_n=1$. If $a_{n+1}=1$, and $a_n\geq 2$, then $(t_n,v_n)\in [\tfrac{1}{2},1]\times [0,\tfrac{1}{2}]$, which is mapped by $\Psi$ to the quadrilateral in $\Delta$ with vertices $(0,\tfrac{1}{2})$, $(0,1)$, $(\tfrac{1}{3},\tfrac{2}{3})$, and $(\tfrac{2}{5},\tfrac{2}{5})$. As the last point is the bottom right vertex of $\Psi([\tfrac{1}{2},1]\times [0,\tfrac{1}{2}])$, it is seen that for points $(\alpha,\beta)$ in this quadrilateral we have $\alpha\leq \tfrac{2}{5}$, and $\beta\geq \tfrac{2}{5}$. So for any $(\alpha,\beta)\in \Psi([\tfrac{1}{2},1]\times [0,\tfrac{1}{2}])$, whatever the value of $f(\alpha,\beta)$, we have $\min \left\{ \alpha,\beta,f(\alpha,\beta)\right\} \leq \tfrac{2}{5}$, which is a considerable improvement of Borel's constant $\tfrac{1}{\sqrt{5}}$. In fact, $f(\tfrac{2}{5},\tfrac{2}{5}) = \tfrac{3}{5}$, so $\tfrac{2}{5}$ cannot be replaced in $\Psi([\tfrac{1}{2},1]\times [0,\tfrac{1}{2}])$ by a smaller constant on the quadrilateral under consideration.\medskip

\subsection{Main result}\label{subsec:MainResult}
In the next section, we first assume that $t_n\neq g$. Later, in Section~\ref{section5}, the case $t_n=g$ will be treated as a special case. Gathering all the various results together, we will obtain the following theorem, which is the main result of this paper.
\begin{theorem}\label{thm:MainResult}
Let $x\in [0,1)$, $x=[0;a_1,a_2,\dots]$, and for $n\in\N$, let $t_n=T^n(x)=[0;a_{n+1},a_{n+2},\dots]$, and $v_n=\frac{q_{n-1}}{q_n}=[0;a_n,a_{n-1},\dots,a_1]$; see~\eqref{future} resp.~\eqref{past}. Furthermore, assuming that $a_{n+1}=1=a_n$, we have the following cases, where $\ell\in\N$ and $1\leq h\leq n$, 
\begin{enumerate}
\item[($i$)]
Let $t_n\neq g$, $t_n=[0;1^{\ell},a_{n+\ell+1}\neq 1,\dots]$ and $v_n=[0;1^h, a_{n-h},\dots,a_1]$ if $1\leq h<n$, and $v_n=[0;1^h]$ if $n=h$. Then
$$
\min \{ \Theta_{n-1}(x), \Theta_n(x), \Theta_{n+1}(x)\} \leq B_{\ell,h} < \frac{1}{\sqrt{5}},
$$
where the value of the Borel-constant $B_{\ell,h}$ is given in Propositions~\ref{ell=h odd},~\ref{ell>hgeq1odd},~\ref{prop:ellhoddh>ell},~\ref{prop:ellhoddh=ell+1},~\ref{prop:ellhoddheven>ell+3},~\ref{prop:ellhoddheven<ell-1},~\ref{prop:ellevenhodd}, \ref{prop:ellhbothevenh=ell}, and~\ref{prop:ellhbothevenh>ell}, depending on the four cases, whether $\ell$ and $h$ are odd or even. The Borel-constants $B_{\ell,h}$ are always best possible, i.e., they cannot be replaced by a smaller constant.
\item[]
\item[($ii$)]
Let $t_n=g$, for some $n\in\N\cup\{ 0\}$, then we have the following cases.\smallskip\
\begin{enumerate}
\item[($iia$)]
Let $n\geq 1$, $a_n\geq 2$, then
$$
\min \{ \Theta_{n-1}, \Theta_n, \Theta_{n+1}\} = \Theta_{n-1} < g^2 < \Theta_{n+1} < \tfrac{1}{\sqrt{5}} < \Theta_n.
$$
\item[]
\item[($iib$)]
Let $n\geq 1$, $1\leq \ell < n$, such that $v_n= [0;1^{\ell}, a_{n-\ell}\neq 1,\dots,a_1]$, and set\footnote{We abbreviate $[0; \underbrace{1,1,\dots,1}_{\ell+2 -\text{times}}]$ by $[0;1^{\ell+2}]$.} $c_{\ell}=[0;1^{\ell+2}]$. Then for $n\in\N$,
$$
\min \{ \Theta_{n-1}, \Theta_n, \Theta_{n+1}\} = \begin{cases}    
\Theta_n\leq \frac{g}{1+g\cdot c_{\ell}} < \frac{1}{\sqrt{5}}<\Theta_{n+1}<\Theta_{n-1}, & \text{if $n\in\N$ is odd}\\
\Theta_{n-1}\leq \frac{c_{\ell}}{1+g\cdot c_{\ell}} < \Theta_{n+1} < \tfrac{1}{\sqrt{5}} < \Theta_n &  \text{if $n\in\N$ is even.}
\end{cases}
$$

\item[]
\item[($iic$)]
Let $n=0$, so $x=g=t_0=[0;\bar{1}]$ and $v_0=0$, and, moreover, we have $t_m=g$ and $v_m=[0;1^m]$, for all $m\in\N$. Then for $n\in\N$,
$$
\min \{ \Theta_{n-1}, \Theta_n, \Theta_{n+1}\} = \begin{cases}
\Theta_n = \frac{g}{1+g\cdot c_{n-2}} < \frac{1}{\sqrt{5}} < \Theta_{n+1}<\Theta_{n-1}, & \text{if $n$ is \emph{odd}}\\
\Theta_{n-1}  = \frac{g}{1+g\cdot c_{n-3}} < \Theta_{n+1} < \frac{1}{\sqrt{5}} < \Theta_n, & \text{if $n$ is \emph{even}}. 
\end{cases}
$$
\end{enumerate}
\end{enumerate}
\end{theorem}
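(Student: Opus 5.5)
The plan is to work entirely in the $(t,v)$-coordinates of Nakada's natural extension and transport everything to $\Delta$ via $\Psi$. We have $(\Theta_{n-1},\Theta_n)=\Psi(t_n,v_n)$ and $\Theta_{n+1}=f(\Theta_{n-1},\Theta_n)$ with $a=a_{n+1}=1$, by \eqref{thetasA}. Then
$$
m(t,v):=\min\Big\{\frac{v}{1+tv},\ \frac{t}{1+tv},\ f\big(\Psi(t,v)\big)\Big\}
$$
is an explicit function on $V_1\cap H_1=[\tfrac12,1]^2$. Using $T$, one checks that $\Theta_{n+1}=\frac{t_{n+1}}{1+t_{n+1}v_{n+1}}$, where $t_{n+1}=1/t_n-1$ and $v_{n+1}=1/(1+v_n)$. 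The hypotheses in ($i$) pin $(t_n,v_n)$ to a rectangle $R_{\ell,h}=I_\ell\times J_h$. Here $I_\ell$ is the cylinder of $x$ beginning with exactly $\ell$ ones followed by a digit $\ne1$; its endpoints are $[0;1^\ell,1]$-type and $[0;1^{\ell},\infty]=[0;1^{\ell-1}]$-type values, with the order of the endpoints flipping with the parity of $\ell$. The interval $J_h$ is defined analogously from the past, together with the finite endpoint $[0;1^h]$ when $h=n$. The problem therefore becomes computing $B_{\ell,h}=\sup_{R_{\ell,h}} m$. Since the rectangle is closed, the supremum is attained in the limit, and Jager's genericity (Remark~\ref{RemarkOnJager}) together with density of admissible digit strings gives optimality.

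The key steps are as follows.
\begin{enumerate}
\item Use the monotonicity \eqref{derivatesarenegative} together with the monotonicity of $\Psi$ in each coordinate. The first coordinate $\frac{v}{1+tv}$ increases in $v$ and decreases in $t$, the second behaves symmetrically, and $f\circ\Psi$ is monotone along coordinate lines. From this, determine on $R_{\ell,h}$ which of the three terms is the minimum, i.e.\ where the curves $\Theta_{n-1}=\Theta_n$ (the diagonal $t=v$), $\Theta_{n-1}=\Theta_{n+1}$ and $\Theta_n=\Theta_{n+1}$ cut the rectangle.
\item Show that the maximum of a minimum of monotone functions on a rectangle is attained either at a corner or on one of these equality curves, where one coordinate is fixed at an endpoint of $I_\ell$ or $J_h$.
\item Evaluate the resulting candidates. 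These are finite continued fractions in ones, i.e.\ ratios of Fibonacci numbers, with $g$ entering as the limit.
\item Split into the cases $\ell,h$ odd or even and compare $h$ with $\ell$ ($h=\ell$, $h=\ell+1$, $h>\ell+3$, $h<\ell-1$, and so on). This produces the formulas of the cited Propositions, and one checks each value against $1/\sqrt5=\Psi(g,g)$. The strict inequality $B_{\ell,h}<1/\sqrt5$ comes from $R_{\ell,h}$ avoiding the fixed point $(g,g)$, combined with the facts that $(\eta_1,\eta_1)$ is the unique point where all three terms equal $1/\sqrt5$ and that $f$ is strictly decreasing.
\end{enumerate}

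For ($ii$) the future is frozen at $t_n=g$, so everything is a one-variable computation in $v=v_n$. We have $\Theta_n=\frac{g}{1+gv}$ and $\Theta_{n-1}=\frac{v}{1+gv}$. Moreover $\Theta_{n+1}=\frac{g}{1+g\,v_{n+1}}$ with $v_{n+1}=1/(1+v)$, which gives $\Theta_{n+1}=\frac{g(1+v)}{1+v+g}$.
\begin{itemize}
\item In ($iia$) we have $v<\tfrac12$, so $\Theta_{n-1}<\tfrac{1/2}{1+g/2}$. Comparing this and the other terms with $g^2$ and $1/\sqrt5$ reduces to elementary inequalities.
\item In ($iib$) we have $v$ between $[0;1^{\ell}]$-type values determined by the parity of $\ell$. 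The monotonicity in $v$ shows which extreme gives the bound: $v\to c_\ell$, since $a_{n-\ell}\ge2$ makes $v$ lie on one side of $c_\ell$. The side on which $v$ lies relative to $g$, and hence whether $\Theta_n$ or $\Theta_{n-1}$ is smallest, depends on the parity of $n$, or rather of $\ell$, which must be reconciled with the statement.
\item Part ($iic$) is the exact evaluation with $v_n=[0;1^n]$, and the ordering again follows from the alternating convergence of $[0;1^n]\to g$.
\end{itemize}

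The main obstacle will be step 2 and the case bookkeeping in step 4. Determining exactly where the curve $\{\Theta_{n+1}=\Theta_n\}$ or $\{\Theta_{n+1}=\Theta_{n-1}\}$ intersects $R_{\ell,h}$ depends delicately on the relative sizes of $\ell$ and $h$. For that reason I would first solve $f(\Psi(t,v))=\frac{t}{1+tv}$ explicitly in $(t,v)$-coordinates; I expect it to become a Möbius-type relation such as $v=\phi(t)$. I would then compare $\phi$ at the Fibonacci-ratio endpoints.
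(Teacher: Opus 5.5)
This is essentially the paper's proof pulled back from $\Delta$ to $(t,v)$-coordinates: the same rectangles $W_\ell\times W_h$ and parity split, the same coordinatewise monotonicity (with $\Theta_{n+1}=\frac{(1-t)(1+v)}{1+tv}$), and the same crossing points, since the paper's $\alpha^{*}$, $\alpha^{\sharp}$, $\alpha^{\flat}$ are exactly where your curves $t=\frac{1+v}{2+v}$ (i.e.\ $\Theta_n=\Theta_{n+1}$) and $t=\frac{1}{1+v}$ (i.e.\ $\Theta_{n-1}=\Theta_{n+1}$), which send $c_k$ to $c_{k+2}$ resp.\ $c_{k+1}$, meet the edges $t=c_\ell$, $t=c_{\ell-2}$; your compactness argument for $B_{\ell,h}<1/\sqrt5$ (the maximum of $m$ on the closed rectangle is $<1/\sqrt5$ because $(g,g)$ is the only point with $m=1/\sqrt5$) is a tidy shortcut over the paper's case-by-case checks. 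Fix two slips: $W_\ell$ has endpoints $[0;1^{\ell},2]=[0;1^{\ell+2}]=c_\ell$ and $[0;1^{\ell}]=c_{\ell-2}$, not $[0;1^{\ell},1]$ and $[0;1^{\ell-1}]$; and in ($iia$) you only have $v_n\le\tfrac12$, with equality for $x=[0;2,\overline{1}]$, $n=1$, where $\Theta_0=g^2$, so there only $\Theta_{n-1}\le g^2$ holds, not the strict inequality in the statement.
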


\begin{remark}\label{RemarkOnBestValue}{\rm
In part ($i$) of our Main Theorem, Theorem~\ref{thm:MainResult}, we state that the various Borel-bounds $B_{\ell,h}$ are always best possible, by which we mean that they cannot be replaced by a smaller constant. We do not mention this explicitly in the proofs of the various propositions, but we show this in every proof by showing that on most of the quadrilaterals (which is occasionally a triangle) under investigation, for one of the variables (i.e., $\alpha$, $\beta$, $f(\alpha,\beta)$, or equivalently: $\Theta_{n-1}$, $\Theta_n$, $\Theta_{n+1}$) the maximum of this variable in the domain under consideration is the improved Borel-bound $B_{\ell,h}$, while for the other variables some values are larger than this bound in the domain under consideration, and we show that on some of the quadrilaterals this is shared by two variables. In fact, we already saw an example of the first kind at the beginning of this section: for points $(\alpha,\beta)$ in the quadrilateral with vertices $(0,\tfrac{1}{2})$, $(0,1)$, $(\tfrac{1}{3}$,$\tfrac{2}{3})$, and $(\frac{2}{5},\tfrac{2}{5})$, we have $\alpha <\tfrac{2}{5}$, and this is already enough to improve the Borel-bound $\tfrac{1}{\sqrt{5}}$. However, since $\beta\geq \tfrac{2}{5}$ on this quadrilateral, and $f(\frac{2}{5},\tfrac{2}{5})=\tfrac{3}{5}>\tfrac{2}{5}$, the bound $\tfrac{2}{5}$ cannot be improved on this quadrilateral under investigation and is therefore our improved Borel-bound. 

It is also important to mention that due to the afore mentioned result of Jager (see Remark~\ref{RemarkOnJager}), for Lebesgue almost all $x$ and any Borel-set $A\subset\Omega$ of positive Lebesgue-measure (or two-dimensional Gauss-measure $\bar{\mu}(A)$) the sequence $(t_n,v_n)_{n\geq 0}$ will visit the set $A$ infinitely often (in fact, due to the Ergodic Theorem, asymptotically $\bar{\mu}(A)$ of the time), and by the map $\Psi$ this carries over to $\Delta$. So we cannot hope to ``deflate'' our quadrilaterals in order to find an even smaller improved Borel-bound $B_{\ell,h}$.
}\hfill $\triangle$
\end{remark}

\subsection{Some more tools}\label{sec:tools}
In~\cite{[BK]}, Vahlen's result~\eqref{Vahlen} was improved by ``staying away'' from the point $\left( \tfrac{1}{2},\tfrac{1}{2}\right)\in\Psi (\Omega)$. The second and third author did this by introducing cylinders $\Delta (k)$, and $\Delta (1,k)$, defined for $k\in\N$ by
$$
\Delta (k) = \left\{ x\in [0,1)\, \Big{|} \, a_1(x)=k\right\},\quad \text{and}\quad
\Delta (1,k) = \left\{ x\in [0,1)\, \Big{|} \, a_1(x)=1, a_2(x)=k\right\}.
$$
An easy calculation yields $\Delta (1)=(\tfrac{1}{2},1)$, $\Delta (1,1)=(\tfrac{1}{2},\tfrac{2}{3})$, $\Delta (k)= \Big(\tfrac{1}{k+1},\tfrac{1}{k}\Big]$, and $\Delta (1,k)= \Big[\tfrac{k}{k+1},\tfrac{k+1}{k+2}\Big)$, for $k\in\N$, $k\geq 2$. Consider for $k\in\N$ the sets $V(1,k)$ and $H(1,k)$, defined by
$$
V(1,k)=\Delta (1,k)\times [0,1]\quad \text{and}\quad H(1,k)=[0,1]\times \Delta (1,k).
$$
Now for every $k,k'\in\N$ one has $\left( \tfrac{1}{2},\tfrac{1}{2}\right)\not\in \Psi(V(1,k))\cup \Psi(H(1,k'))$, and therefore $\left( \tfrac{1}{2},\tfrac{1}{2}\right)\not\in \Psi(V(1,k))\cap \Psi(H(1,k'))$, and from this improvements to Vahlen's result~\eqref{Vahlen} immediately follow; see~\cite{[BK]}. To improve Borel's result~\eqref{Borel} this is \textbf{not} enough. When $x=g=\xi_1$, we have $T(g)=g$, $\Psi (g,g)=(\eta_1,\eta_1)$ (recall that $\eta_1$ is Borel's constant $1/\sqrt{5}$; see~\eqref{Borel} and~\eqref{eta_a}), 
$$
\Psi \left(\mathcal{T}^n(g,0)\right) = \Psi \left(\left( g,\frac{F_{n-1}}{F_n}\right)\right) = (\Theta_{n-1}(g),\Theta(g))\in \Psi (V(1,1)),\quad \text{for $n\geq 0$},
$$
and
$$
\lim_{n\to\infty} \mathcal{T}^n(g,0) = \lim_{n\to\infty} \left( g,\frac{F_{n-1}}{F_n}\right) = (g,g)\in V(1,1)\cap H(1,1).
$$
Here $(F_n)_{n\geq -1}$ is the sequence of Fibonacci\footnote{In the literature there are several ways \emph{Fibonacci numbers} are defined. Here we defined them in such a way, that they are the denominators of the convergents of the \emph{golden mean} $g$; see also~\eqref{goldenmean}.} numbers, given by
\begin{equation}\label{fibonacconumbers}
F_{-1}=0, F_0=1, F_n=F_{n-1}+F_{n-2},\,\, \text{for $n\in\N$}.
\end{equation}
For $x=g$, we have that the sequence $(\Theta_n)_{n\geq 0}$ converges in an alternating manner to
$$
\frac{g}{1+g^2} = \frac{1}{\sqrt{5}}.
$$
Therefore, it seems difficult to improve Borel's result~\eqref{Borel}; the approach from~\cite{[BK]} seems not applicable here. In order to improve the Borel-bound $\tfrac{1}{\sqrt{5}}$ we ``have to stay away'' from $(\tfrac{1}{\sqrt{5}},\tfrac{1}{\sqrt{5}})\in\Delta$, which is hard as this point is a fixed point of $F$. We will see that the ideas from~\cite{[BK]} can be adopted, except from some special cases. We will see that $x=g$ (or more generally, $t_n=g$ for some $n\in\N\cup\{ 0\}$) is a special case that we will treat separately in~Subsection~\ref{section5}. We will find a Han\u{c}l \& Nair-type result for this special case.\medskip\

In this section, as $t_n\neq g$, we assume that there are $\ell\in\N$ and $k\in\N$, $k\geq 2$, such that
$$
t_n = [0; \underbrace{1,1,\dots,1}_{\ell -\text{times}},k,\dots] =: [0;1^{\ell},k,\dots]
$$
(note that $\ell\geq 1$, as $a_{n+1}=1$). We need to further refine the cylinders $\Delta (1)$ and $\Delta (1,k)$ as follows. Let $\ell,k\in\N$, then
$$
\Delta (1^{\ell},k) = \{ x\in [0,1)\, \big{|}\, a_{n+1}=1,\dots,a_{n+\ell}=1, a_{n+\ell+1}=k\}.
$$
For $\ell\geq 2$ we obviously have that
$$
T(\Delta (1^{\ell},k)) = \Delta (1^{\ell -1},k),
$$
and $T(\Delta (1,k))=\Delta (k) = (\frac{1}{k+1},\frac{1}{k}]$ if $k\geq 2$, and $T(\Delta (1,1)) = \Delta (1) = (\frac{1}{2},1)$.
We furthermore define
$$
V(1^{\ell},k)=\Delta (1^{\ell},k)\times [0,1]\quad \text{and}\quad H(1^{\ell},k)=[0,1]\times \Delta (1^{\ell},k),
$$
and also
$$
V^*(1^{\ell},k) = \Psi(V(1^{\ell},k))\quad \text{and}\quad H^*(1^{\ell},k) = \Psi(H(1^{\ell},k)).
$$
Finally, define the sets $W_{\ell}$, $V_{\ell}'$ and $H_{\ell}'$ by
$$
W_{\ell}=\bigcup_{k\geq 2} \Delta (1^{\ell},k),\quad V_{\ell}'= \Psi(W_{\ell}\times [0,1]),\quad\text{and}\quad H_{\ell}'= \Psi([0,1]\times W_{\ell}).
$$
Note that $V_{\ell}'$ is the reflection of $H_{\ell}'$ in the diagonal $\beta=\alpha$, and vice versa.

\begin{remark}\label{remarkonWell}{\rm
Setting 
\begin{equation}\label{cell}
c_{\ell}=\frac{F_{\ell+1}}{F_{\ell+2}},\quad \text{for $\ell\geq -1$}
\end{equation}
where $(F_n)_{n\geq -1}$ are the Fibonacci numbers of~\eqref{fibonacconumbers}, one has
$$
\begin{array}{lclclclcl}
W_1 &=& \bigcup_{k\geq 2} \Delta (1,k) &=& [\tfrac{2}{3},1) &=& [\tfrac{F_2}{F_3},\tfrac{F_0}{F_1}) &=& [c_1,c_{-1})\\
W_2 &=& \bigcup_{k\geq 2} \Delta (1,1,k) &=& (\tfrac{1}{2},\tfrac{3}{5}] &=& (\tfrac{F_1}{F_2},\tfrac{F_3}{F_4}] &=&  (c_0,c_2]\\
W_3 &=& \bigcup_{k\geq 2} \Delta (1^3,k) &=& [\tfrac{5}{8},\tfrac{2}{3}) &=& [\tfrac{F_4}{F_5},\tfrac{F_2}{F_3}) &=& [c_3,c_1)\\
W_4 &=& \bigcup_{k\geq 2} \Delta (1^4,k) &=& (\tfrac{3}{5},\tfrac{8}{13}] &=& (\tfrac{F_3}{F_4},\tfrac{F_5}{F_6}] &=& (c_2,c_4]\\
\,\,\,\vdots && \qquad\,\, \vdots && \quad\vdots && \,\,\quad\vdots && \,\,\quad\vdots 
\end{array}
$$
From definition~\eqref{fibonacconumbers} of the Fibonacci numbers, from the fact that $T(W_{\ell+1})=W_{\ell}$, for $\ell\geq 1$, by induction and by~\eqref{cell}, we have if $\ell\in\N$ is \emph{odd}
$$
W_{\ell} = \bigcup_{k\geq 2} \Delta (1^{\ell},k) = \Big[ \frac{F_{\ell+1}}{F_{\ell+2}}, \frac{F_{\ell-1}}{F_{\ell}} \Big) = [c_{\ell},c_{\ell-2}),
$$
and if $\ell\in\N$ is \emph{even}
$$
W_{\ell} = \bigcup_{k\geq 2} \Delta (1^{\ell},k) = \Big( \frac{F_{\ell-1}}{F_{\ell}}, \frac{F_{\ell+1}}{F_{\ell+2}}\Big] = (c_{\ell-2},c_{\ell}].
$$
}\hfill $\triangle$
\end{remark}

\begin{figure}[h]
$$
\beginpicture
    \setcoordinatesystem units <0.8cm,0.8cm>
    \setplotarea x from 0 to 12, y from 0 to 9 %for y the scale is from 0 to 12 - 3, so 1->9, 1/2->3, etc.
    \putrule from 0 1 to 0 9
    \put {$1$} at -0.5 9.3
    \put {$0$} at 0 -0.5
    \put {$\tfrac{1}{2}$} at 6 -0.5
    \put {$\tfrac{2}{3}$} at 8 -0.5
    \put {$\tfrac{1}{2}$} at -0.5 3
    \put {$\tfrac{2}{3}$} at -0.5 5
    %\put {$\tfrac{3}{4}$} at -0.5 6
    \put {$\tfrac{1}{3}$} at -0.5 1
    \put {$g$} at -0.15 4.39
    \put {$\tfrac{3}{5}$} at -0.5 4.2
    \put {$V_1'$} at 0.3 6.5
    \put {$V_2'$} at 0.3 3.5
    \put {$V_3'$} at 0.3 4.59
  
\setlinear \plot
0 9 9 0
/
\setlinear \plot
0 3 8 1
/
\setlinear \plot
0 5 7.2 1.8
/
\setlinear \plot
0 4.2 7.5 1.5
/
\setlinear \plot
0 4.5 7.38 1.615
/
\setlinear {\color{red}\plot
0 4.39 7.4 1.58
/}

\setdots
\putrule from 0 1 to 8 1
\putrule from 0 3 to 6 3
\putrule from 6 0 to 6 3
\putrule from 0 0 to 0 1
\putrule from 8 0 to 8 1 
\setlinear \plot
3 0 6 3
/

\endpicture
$$ \caption[triangle]{$V_{\ell}'$ stacked ``on top of each other'' for $\ell =1,3$, and $\ell=2$. Note that the ``bottom line'' of $V_3'$ and the top-line of $V_2'$ are already very close to the line $\beta = -g^2\alpha+g$ (in {\color{red}red}).} \label{figurestackedVs}
\end{figure}

Note that for $\ell$ \emph{odd} the sets $V_{\ell}'$ are seamlessly ``stacked'' underneath each other (so $V_1'$ on top, followed by $V_3'$, et cetera), and ``converge'' to the line-segment $\beta =-g^2\alpha +g$; which is the image under $\Psi$ of the vertical line-segment $t=g$ in $\Omega$. For $\ell\geq 1$ \emph{even} the sets $V_{\ell}'$ are seamlessly ``stacked'' on top of each other, and ``converge'' to the line-segment $\beta =-g^2\alpha +g$. See also Remark~\ref{remarkonhorizontalandverticallines}. In Figure~\ref{figurestackedVs} we represent $V_1'$, $V_3'$, and $V_2'$. Obviously, it is not possible to depict the other $V_{\ell}'$, due to the ``fast'' convergence of the numbers $c_{\ell}$ to $g$, when $\ell\to\infty$.

For the sets $H_{\ell}'\cap \Psi (V_1)$ we see that these are stacked seamlessly next to each other; if $\ell$ is \emph{even}, these are stacked in increasing $\ell$, ``converging'' to the line (segment) $\beta = -G^2\alpha + G$, while for $\ell$ \emph{odd} these are stacked from right to left, again ``converging'' to $\beta = -G^2\alpha + G$ (this is also clear when one reflects the sets $V_{\ell}'$ in the diagonal $\beta = \alpha$).\smallskip\

As $\min\left\{ \Theta_{n-1},\Theta_n, \Theta_{n+1}\right\}$ in the sets $V_{\ell}'\cap H_h'$ ``behaves'' differently whether $\ell$ is odd or whether $\ell$ is even, and also whether $h$ is odd or even, when combining future $t_n$ and past $v_n$ at time $n$, this will lead to four cases which will be treated separately in the next section, Section~\ref{section:combining}.\medskip\

The point of intersection of the line $\beta = -c_{\ell}^2\alpha +c_{\ell}$ with $\alpha +\beta = 1$ is
$$
\left( \frac{1}{1+c_{\ell}}, \frac{c_{\ell}}{1+c_{\ell}}\right),
$$
and 
$$
\frac{1}{1+c_{\ell}}\uparrow g\quad \text{and}\quad \frac{c_{\ell}}{1+c_{\ell}}\downarrow g^2,\,\, \text{when $\ell$ \emph{odd} and $\ell\to\infty$},
$$
while the arrows are ``flipped'' when $\ell$ is \emph{even},
$$
\frac{1}{1+c_{\ell}}\downarrow g\quad \text{and}\quad \frac{c_{\ell}}{1+c_{\ell}}\uparrow g^2,\,\, \text{when $\ell$ \emph{even} and $\ell\to\infty$},
$$
Furthermore,
$$
f\left( \frac{1}{1+c_{\ell}}, \frac{c_{\ell}}{1+c_{\ell}}\right) = \frac{2(1-c_{\ell})}{1+c_{\ell}}\uparrow\downarrow 2g^3 = 0.4721\dots>\frac{1}{\sqrt{5}},\,\, \text{when $\ell$ \emph{odd/even} and $\ell\to\infty$}.
$$
The value of $f$ is easily found in the line segment $\alpha +\beta = 1$, by substituting $\beta = 1-\alpha$ in $f(\alpha,\beta)$,
\begin{eqnarray*}
f(\alpha ,\beta ) &=& \alpha +\sqrt{(1-4\alpha (1-\alpha))}-(1-\alpha)\\
&=& 2\alpha  - 1 +\sqrt{(2\alpha -1)^2}\\
&=& 2\alpha  - 1 + \left|2\alpha -1\right|\\
&=& \begin{cases}
2\alpha - 1 + 1 - 2\alpha = 0, & \text{if $0\leq\alpha\leq \tfrac{1}{2}$};\\
2\alpha -1 + 2\alpha -1 = 4\alpha -2,  & \text{if $\tfrac{1}{2}\leq\alpha\leq 1$}.
\end{cases}
\end{eqnarray*}

\noindent
On the line $\beta = -c_{\ell}^2\alpha +c_{\ell}$, with $\ell\geq 0$ (i.e., with $\tfrac{1}{2}\leq c_{\ell}<1$), we have
\begin{eqnarray*}
f(\alpha ,\beta ) &=& \alpha +\sqrt{(1-4\alpha (-c_{\ell}^2\alpha +c_{\ell}))}-(-c_{\ell}^2\alpha +c_{\ell})\\
&=& (1+c_{\ell}^2)\alpha  - c_{\ell} +\sqrt{(2c_{\ell}\alpha -1)^2}\\
&=& (1+c_{\ell}^2)\alpha  - c_{\ell} + (1-2c_{\ell}\alpha )\\
&=& (1-c_{\ell})^2\alpha  + 1 - c_{\ell}.
\end{eqnarray*}
So with $\alpha\in \left[0, \tfrac{1}{1+c_{\ell}}\right]$ and $(\alpha ,\beta )$ on the line $\beta = -c_{\ell}^2\alpha +c_{\ell}$ it follows that $f(\alpha ,\beta )$ is monotonically increasing
when $\ell\geq 0$, as $f(\alpha,\beta)$ for $(\alpha,\beta)$ on the line $\beta=-c_{\ell}^2\alpha +c_{\ell}$ is a linear function of $\alpha$ with positive derivative.\smallskip\

In Section~\ref{section:combining} the following will be very instrumental. There is a $\beta$-value of the point $(\alpha ,\beta )$ in the line $\beta = -c_{\ell}^2\alpha +c_{\ell}$ equal to $f(\alpha ,\beta )$. In order to find this point the following equation needs to be solved:
$$
-c_{\ell}^2\alpha  + c_{\ell} = (c_{\ell}-1)^2\alpha  +1 - c_{\ell},
$$
yielding
\begin{equation}\label{alpha*beta*}
\alpha^*=\frac{2c_{\ell}-1}{2c_{\ell}^2-2c_{\ell}+1}\downarrow \tfrac{1}{\sqrt{5}},\quad \text{and}\quad \beta^*=\frac{c_{\ell}(1-c_{\ell})}{2c_{\ell}^2-2c_{\ell}+1}\uparrow \frac{1}{\sqrt{5}},\quad \text{as $\ell$ is odd, $\ell\to\infty$},
\end{equation}
with the arrows ``flipped'' when $\ell$ is even. So for $\ell$ odd, with Lemma~\ref{valuesfon''horizontallines''} and Lemma~\ref{valuesfon''verticallines''} in mind (see below), on
$$
V_{\ell}'\setminus \left\{ (\alpha ,\beta )\in V_{\ell}'\, \big{|}\, \beta < \tfrac{c_{\ell}(1-c_{\ell})}{2c_{\ell}^2-2c_{\ell}+1} \right\}
$$
the function $f$ attains its maximum in $(\alpha^*,\beta^*)$. In Section~\ref{section:combining} similar points with first coordinate $\alpha^{\sharp}$ resp.\ $\alpha^{\flat}$ will also be introduced and put to work.\smallskip 

The following two lemmas will be used extensively in the next sections. We skip their proofs, as these proofs are essential the same as the above derivations of the behavior of $f$ on the lines $\beta = -c_{\ell}^2\alpha+c_{\ell}$, for $\ell\geq -1$.

\begin{lemma}\label{valuesfon''horizontallines''}
Let $c\in [\tfrac{1}{2},1)$ and $(\alpha ,\beta )\in\Psi (V_1)$, such that $\beta = -c^2\alpha+c$. Then for $\alpha\in \left[ 0,\tfrac{1}{1+c}\right]$ one has
\begin{equation}\label{fon''horizontal''line}
f(\alpha,\beta) = (c-1)^2\alpha + 1-c.
\end{equation}
\end{lemma}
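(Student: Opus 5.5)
The plan is to substitute $\beta=-c^2\alpha+c$ directly into the definition~\eqref{fmap} of $f$ with $a=1$. This is legitimate because $(\alpha,\beta)\in\Psi(V_1)$, so $f(\alpha,\beta)=\alpha+\sqrt{1-4\alpha\beta}-\beta$. We then simplify the square root. The computation is the one already carried out above for $c=c_\ell$; the only point requiring care is the sign of the expression under the absolute value.

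First we compute
$$
1-4\alpha\beta = 1-4\alpha(-c^2\alpha+c) = 4c^2\alpha^2-4c\alpha+1=(2c\alpha-1)^2,
$$
so $\sqrt{1-4\alpha\beta}=|1-2c\alpha|$.

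Next we check that $1-2c\alpha\ge 0$ on the relevant range. For $\alpha\in[0,\tfrac{1}{1+c}]$ we have $2c\alpha\le \tfrac{2c}{1+c}$. Since $c<1$, this gives $2c<1+c$, hence $2c\alpha<1$. Therefore $\sqrt{1-4\alpha\beta}=1-2c\alpha$. This sign check is the only potential obstacle. It is exactly where the restriction $\alpha\le\tfrac{1}{1+c}$ is used; this value is the $\alpha$-coordinate of the intersection of the line with $\alpha+\beta=1$, and $c<1$ is what keeps us off the other branch.

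Finally, we collect terms:
$$
f(\alpha,\beta)=\alpha+(1-2c\alpha)-(-c^2\alpha+c)=(1-2c+c^2)\alpha+1-c=(c-1)^2\alpha+1-c,
$$
which is~\eqref{fon''horizontal''line}.

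It remains to confirm that the points considered are genuine points of the domain. For $c\in[\tfrac12,1)$, the line $\beta=-c^2\alpha+c$ with $\alpha\in[0,\tfrac{1}{1+c}]$ lies in $\Delta$. It is the image under $\Psi$ of the vertical segment $t=c$ (Remark~\ref{remarkonhorizontalandverticallines}), and that segment lies in $V_1$ because $c\in(\tfrac12,1]$; the endpoint $c=\tfrac12$ lies on the common boundary with $V_2$, where the formula still holds by continuity. So the hypothesis $(\alpha,\beta)\in\Psi(V_1)$ is consistent, and the formula for $f$ with $a=1$ applies.
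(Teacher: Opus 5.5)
Your proof is correct and follows the paper's approach. The paper omits this proof and points to its earlier derivation on the lines $\beta=-c_\ell^2\alpha+c_\ell$, which is the same substitution: write $1-4\alpha\beta=(2c\alpha-1)^2$ and take $\sqrt{(2c\alpha-1)^2}=1-2c\alpha$. Your check that $2c\alpha\le \tfrac{2c}{1+c}<1$ for $c<1$ justifies that choice of sign, which the paper leaves implicit.
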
 \smallskip\
So, on the line $\beta = -c^2\alpha +c$ the function $f$ is increasing monotonically. A similar result holds for line segments $\beta =-\tfrac{1}{d^2}\alpha +\tfrac{1}{d}$ in $\Psi (V_1)$, for $d\in (0,1)$.
\begin{lemma}\label{valuesfon''verticallines''}
Let $d\in (0,1]$ and $(\alpha ,\beta )\in\Psi (V_1)$, such that $\beta = -\tfrac{1}{d^2}\alpha +\tfrac{1}{d}$. Then for $\alpha\in \left[\tfrac{d}{d+1},\tfrac{2d}{2+d}\right]$ one has
\begin{equation}\label{fon''vertical''line}
f(\alpha ,\beta ) = \left( \frac{1}{d}+1\right)^2\alpha  -  \left( \frac{1}{d}+1\right).
\end{equation}
\end{lemma}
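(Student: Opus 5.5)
The plan is to repeat, for the image of a horizontal segment, the computation that gave Lemma~\ref{valuesfon''horizontallines''}. The idea is to substitute the line equation into $f$ with $a=1$ and observe that the radicand becomes a perfect square. The only real issue is choosing the correct sign when taking the square root, and the stated $\alpha$-interval is exactly what settles it.

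\textbf{Step 1: identify the segment.} By Remark~\ref{remarkonhorizontalandverticallines}, the line $\beta=-\tfrac{1}{d^2}\alpha+\tfrac{1}{d}$ is the image under $\Psi$ of the horizontal segment $v=d$. From~\eqref{PsiMap},
$$
\Psi(t,d)=\left(\tfrac{d}{1+td},\tfrac{t}{1+td}\right).
$$
Requiring $(t,d)\in V_1$ means $t\in(\tfrac12,1]$. Since $t\mapsto \tfrac{d}{1+td}$ is decreasing, $\alpha$ then ranges over $\big[\tfrac{d}{1+d},\tfrac{2d}{2+d}\big]$, up to the endpoint $t=\tfrac12$. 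This matches the interval in the statement, so on this interval $f$ is given by~\eqref{fmap} with $a=1$.

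\textbf{Step 2: complete the square.} Substituting $\beta=-\tfrac{\alpha}{d^2}+\tfrac1d$ gives
$$
1-4\alpha\beta = 1-\tfrac{4\alpha}{d}+\tfrac{4\alpha^2}{d^2}=\Big(\tfrac{2\alpha}{d}-1\Big)^2 ,
$$
so $\sqrt{1-4\alpha\beta}=\big|\tfrac{2\alpha}{d}-1\big|$.

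\textbf{Step 3: fix the sign.} Since $0<d\le1$, we have
$$
\alpha\ \ge\ \tfrac{d}{1+d}\ \ge\ \tfrac d2 .
$$
Equivalently, $1+td\le 2$, which always holds. Hence $\tfrac{2\alpha}{d}-1\ge0$ and the absolute value may be dropped. This is the step that plays the role of the condition $\alpha\le\tfrac{1}{1+c}$ in Lemma~\ref{valuesfon''horizontallines''}, where the opposite sign occurred.

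\textbf{Step 4: simplify.} With the sign fixed,
$$
f(\alpha,\beta)=\alpha+\tfrac{2\alpha}{d}-1+\tfrac{\alpha}{d^2}-\tfrac1d=\Big(1+\tfrac1d\Big)^2\alpha-\Big(1+\tfrac1d\Big),
$$
which is~\eqref{fon''vertical''line}. As a consequence, $f$ is increasing in $\alpha$ along these segments, in analogy with the remark after Lemma~\ref{valuesfon''horizontallines''}.
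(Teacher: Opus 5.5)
Your proof is correct and is essentially the argument the paper intends. The paper omits the proof, saying it is the same as the derivation of $f$ on the lines $\beta=-c_{\ell}^2\alpha+c_{\ell}$: substitute the line, recognize the radicand as a square (here $(\tfrac{2\alpha}{d}-1)^2$), and fix the sign of the root; your explicit check that $\alpha\ge\tfrac{d}{1+d}\ge\tfrac{d}{2}$ on the stated interval (which already suffices on its own, so Step 1 is only motivation) supplies the sign justification the paper leaves implicit.
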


\begin{remark}\label{remark2}{\rm
In $\Delta$, the line-segment $\beta = -\tfrac{1}{d^2}\alpha +\tfrac{1}{d}$ has endpoints $(d,0)$ and $\left( \tfrac{d}{d+1},\tfrac{1}{d+1}\right)$. Since $a_{n+1}=1$, only the part of $y=-\tfrac{1}{d^2}x+\tfrac{1}{d}$ in $\Psi (V_1)$, which has endpoints $\left( \tfrac{d}{d+1},\tfrac{1}{d+1}\right)$ and $\left(\tfrac{2d}{2+d},\tfrac{1}{2+d}\right)$, the last point being the point of intersection of $\beta =-\tfrac{1}{d^2}\alpha +\tfrac{1}{d}$ and $\beta = -\tfrac{1}{4}\alpha +\tfrac{1}{2}$ is being considered (the lower boundary line of $\Psi (V_1)$). Note that if $d=1$ the line segment $\alpha+\beta=1$, with $\tfrac{1}{2}\leq \alpha\leq \tfrac{2}{3}$. For these $\alpha$ and $\beta = 1-\alpha$ it holds that $f(\alpha,\beta)=4\alpha -2$.\smallskip\
\newline
For $\alpha = \tfrac{d}{d+1}$ and $\beta = \tfrac{1}{d+1}$,~\eqref{fon''vertical''line} gives
$$
f(\alpha ,\beta ) = \left( \frac{1}{d}+1\right)^2 \cdot \frac{d}{d+1} -  \left( \frac{1}{d}+1\right) = 0,
$$
as expected, as $f(\alpha ,\beta )=0$ for $\alpha +\beta =1$ and $0\leq \alpha \leq \tfrac{1}{2}$. The maximum value of $f$ on the line segment $\beta =-\tfrac{1}{d^2}\alpha +\tfrac{1}{d}$ in $\Psi (V_1)$ is
$$
\left( \frac{1}{d}+1\right)^2 \cdot \frac{2d}{d+2} -  \left( \frac{1}{d}+1\right) = \frac{d+1}{d+2}.
$$
}\hfill $\triangle$
\end{remark}

\section{Combining past and future}\label{section:combining}
As we mentioned earlier, throughout Sections~\ref{OddOdd}--\ref{EvenEven} we assume $a_{n+1}=1=a_n$, as otherwise Borel's result~\eqref{Borel} is already much improved. In these sections, we  assume that $t_n\neq g$, so there exist an $\ell\geq 1$ and a $k\geq 2$ such that $t_n=[0;1^{\ell},k,\dots]$, and $v_n=[0;1^{h},a_{n-h}\neq 1,\dots, a_1]$, for some $h\in\{1,\dots,n-1\}$, or $v_n=[0;1^n]$. 
\noindent
Now in order to improve Borel's result for all $\ell\geq 1$ and $h\geq 1$, a constant $B_{\ell,h}<\frac{1}{\sqrt{5}}$ must be determined such that for all $(\alpha,\beta)\in V_{\ell}'\cap H_h' = \Psi (W_{\ell}\times W_h)$,
$$
\min \left\{\alpha, \beta, f(\alpha,\beta)\right\} \leq B_{\ell,h}.
$$
Obviously, we have four cases to consider: $(\ell,h)$ is either (\emph{odd}, \emph{odd}), (\emph{odd}, \emph{even}), (\emph{even}, \emph{odd}), or (\emph{even}, \emph{even}). We could consider a figure based on Figure~\ref{figurestackedVs}, where we intersect the $V_{\ell}'$ with their reflections on the diagonal $\beta = \alpha$. However, since the sets $V_{\ell}'$ are hardly visible in Figure~\ref{figurestackedVs} for $\ell \geq 3$, we refrain from doing so. Note that each time $\ell = h$, we have $\Psi (W_{\ell}\times W_h)$ is symmetric on the diagonal $\beta = \alpha$, and each time $\ell\neq h$, the reflection of $\Psi (W_{\ell}\times W_h)$ on the diagonal $\beta = \alpha$ yields $\Psi (W_h\times W_{\ell})$, and vice versa.\smallskip\

\noindent
Finally, in Section~\ref{section5} Borel's result~\eqref{Borel} will be sharpened for the special case $t_n=g$, where $n\in\N\cup\{ 0\}$.

\subsection{$(\ell,h)$ is \rm{(\emph{odd}, \emph{odd})}}\label{OddOdd} In this case we have, see also Remark~\ref{remarkonWell}, that 
$$
W_{\ell}\times W_h = [c_{\ell},c_{\ell-2}) \times [c_h,c_{h-2}).
$$
We first discuss the case $\ell=h$ \emph{odd}, then the two cases that $\ell\neq h$ \emph{odd}.\medskip\

($i$) If $\ell = h=1$ \emph{odd}, we see that $\Psi (W_{\ell}\times W_h)$ is a triangle, symmetric on the diagonal $\beta = \alpha$, with vertices $(\tfrac{6}{13},\tfrac{6}{13})$, $(\tfrac{2}{5},\tfrac{3}{5})$, and $(\tfrac{3}{5},\tfrac{2}{5})$. If $\ell = h\geq 3$ \emph{odd}, $\Psi (W_{\ell}\times W_h)$ is a quadrilateral, symmetric on the diagonal $\beta = \alpha$, with vertices
\begin{equation}\label{ell=hgeq3odd}
\left( \frac{c_{\ell}}{1+c_{\ell}^2},\frac{c_{\ell}}{1+c_{\ell}^2}  \right),\,\, \left( \frac{c_{\ell}}{1+c_{\ell -2}c_{\ell}},\frac{c_{\ell -2}}{1+c_{\ell -2}c_{\ell}}\right),\,\, \left(\frac{c_{\ell -2}}{1+c_{\ell -2}^2}, \frac{c_{\ell -2}}{1+c_{\ell -2}^2}\right),\,\, \left( \frac{c_{\ell -2}}{1+c_{\ell -2}c_{\ell}}, \frac{c_{\ell}}{1+c_{\ell -2}c_{\ell}}\right)
\end{equation}
which are the images of the vertices $A=(c_{\ell},c_{\ell})$, $B=(c_{\ell-2},c_{\ell})$, $C=(c_{\ell-2},c_{\ell-2})$, and $D=(c_{\ell},c_{\ell-2})$ resp.\ of the rectangle $W_{\ell}\times W_h$. See also Figure~\ref{ch4.1figure} below.\medskip\

\begin{figure}[h]
$$
\beginpicture
    \setcoordinatesystem units <15cm,15cm>
    \setplotarea x from 4.4 to 4.75, y from 4.4 to 4.75 
    \put {$\Psi (A)$} at 4.453 4.445
    \put {$\Psi (B)$} at 4.415 4.715
    \put {$\Psi (C)$} at 4.63 4.625
    \put {$\Psi (D)$} at 4.71 4.4
  
\setlinear \plot
4.463 4.463 4.4117 4.705 
/
\setlinear \plot
4.463 4.463 4.705 4.4117
/
\setlinear \plot
4.4117 4.705 4.61538 4.61538
/
\setlinear \plot
4.705 4.4117 4.61538 4.61538
/

\setdots
\setlinear \plot
4.4 4.4 4.7 4.7
/

\endpicture
$$ \caption[diagonal]{$\Psi (W_{\ell}\times W_{\ell})$ for $\ell=3$. See also~\eqref{ell=hgeq3odd} for the various points $\Psi(A)$, $\Psi(B)$, $\Psi(C)$, and $\Psi(D)$. The dotted line is the diagonal $\beta=\alpha$ in $\Delta$.}\label{ch4.1figure}
\end{figure}

($i_a$) The case $\ell=1=h$. Note that for $(\alpha,\beta)\in\Psi (W_1\times W_1)$ we have both $\alpha\geq \tfrac{2}{5}$ and $\beta\geq \tfrac{2}{5}$, and
$$
0\leq f(\alpha,\beta) \leq f(\tfrac{3}{5},\tfrac{2}{5}) = \tfrac{2}{5}.
$$
We see that for $(t_n,v_n)\in W_1\times W_1$,
$$
\min \{ \Theta_{n-1},\Theta_n,\Theta_{n+1}\} \leq B_{1,1} = \tfrac{2}{5}.
$$

($i_b$) The case $\ell=h\geq 3$ is \emph{odd}. From~\eqref{ell=hgeq3odd} we see that for $(\alpha,\beta)\in \Psi (W_{\ell}\times W_{\ell})$ both $\alpha \geq \frac{c_{\ell}}{1+c_{\ell -2}c_{\ell}}$ and  $\beta \geq \frac{c_{\ell}}{1+c_{\ell -2}c_{\ell}}$. From the definition of Fibonacci numbers~\eqref{fibonacconumbers}, definition~\eqref{cell} of the numbers $c_{\ell}$, and from the well-known recurrence relations for the sequence $(q_n)$ we find,
\begin{equation}\label{cell2}
c_{\ell} = \frac{1}{1+\displaystyle \frac{1}{1+ c_{\ell -2}}} = \frac{1+c_{\ell -2}}{2+c_{\ell -2}},\quad \text{for $\ell\in\N$}.
\end{equation}
Setting $h(c)=\tfrac{1+c}{c^2+2c+2}$ we have $h'(c)=\tfrac{-c(c+2)}{(c^2+2c+2)^2}<0$, and since for $\ell$ odd we have $c_{\ell}\downarrow g$ as $\ell\to\infty$, \eqref{cell2} yields
$$
\frac{c_{\ell}}{1+c_{\ell -2}c_{\ell}}=h(c_{\ell -2})\uparrow \frac{g}{1+g^2}=\frac{1}{\sqrt{5}},\quad \text{for $\ell$ odd, $\ell\to\infty$}.
$$
For points $(\alpha,\beta)\in \Psi (W_{\ell}\times W_{\ell})$,
$$
\frac{c_{\ell}-c_{\ell -2}+1-c_{\ell -2}c_{\ell}}{1+c_{\ell -2}c_{\ell}} \leq f(\alpha,\beta)\leq \frac{c_{\ell-2}-c_{\ell}+1-c_{\ell -2}c_{\ell}}{1+c_{\ell -2}c_{\ell}}.
$$
But then it follows from~\eqref{cell2}, that
\begin{equation}\label{cell3}
c_{\ell-2}-c_{\ell}+1-c_{\ell -2}c_{\ell} = c_{\ell-2} -\frac{1+c_{\ell-2}}{2+c_{\ell-2}} +1 -c_{\ell-2}\cdot \frac{1+c_{\ell-2}}{2+c_{\ell-2}} = \frac{1+c_{\ell-2}}{2+c_{\ell-2}}=c_{\ell},
\end{equation}
and we see that in this case (i.e., $(\alpha,\beta)\in \Psi (W_{\ell}\times W_{\ell})$) the minimal value for both $\alpha$ and $\beta$ is also the maximum possible value for $f(\alpha,\beta)$. Thus, from ($i_a$) and ($i_b$) the following (partial) result follows.
\begin{proposition}\label{ell=h odd}
Let $\ell=h\geq 1$ be odd. Then for $(t_n,v_n)\in W_{\ell}\times W_{\ell}$,
$$
\min \{ \Theta_{n-1},\Theta_n,\Theta_{n+1}\} \leq B_{\ell,\ell} = \frac{c_{\ell}}{1+c_{\ell -2}c_{\ell}} < \frac{1}{\sqrt{5}}.
$$
Equivalently, if $(\alpha,\beta)\in W_{\ell}\times W_{\ell} $, where $\ell\geq 1$ is odd, then 
$$
\min\left\{ \alpha,\beta,f(\alpha,\beta)\right\} \le B_{\ell,\ell} = \frac{c_{\ell}}{1+c_{\ell -2}c_{\ell}} < \frac{1}{\sqrt{5}}.
$$
These constants are best possible. I.e., they cannot be replaced by smaller ones.
\end{proposition}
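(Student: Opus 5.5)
The bound will come from the geometry of the quadrilateral $Q_\ell=\Psi(W_\ell\times W_\ell)$ described in~\eqref{ell=hgeq3odd}, using the monotonicity of $f$ from~\eqref{derivatesarenegative} together with Lemma~\ref{valuesfon''horizontallines''}. Since $(\Theta_{n-1},\Theta_n,\Theta_{n+1})=(\alpha,\beta,f(\alpha,\beta))$ with $(\alpha,\beta)=\Psi(t_n,v_n)$, it suffices to show that $\min\{\alpha,\beta,f(\alpha,\beta)\}\le B_{\ell,\ell}$ on $Q_\ell$.

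The first step is to locate the extreme values of $\alpha$ and $\beta$ on $Q_\ell$. Because $\Psi$ maps vertical and horizontal segments to line segments (Remark~\ref{remarkonhorizontalandverticallines}), $Q_\ell$ is the convex quadrilateral with vertices $\Psi(A),\Psi(B),\Psi(C),\Psi(D)$. The minimum of $\alpha$ over $Q_\ell$ is therefore attained at a vertex, and comparing the four vertices gives
\[
\min\alpha=\frac{c_\ell}{1+c_{\ell-2}c_\ell},
\]
attained at $\Psi(B)$. By the symmetry in the diagonal $\beta=\alpha$, the same value is the minimum of $\beta$, attained at $\Psi(D)$. For $\ell=1$ one uses the triangle with vertices $(\tfrac{6}{13},\tfrac{6}{13}),(\tfrac25,\tfrac35),(\tfrac35,\tfrac25)$, with $c_{-1}=1$, and this gives $\tfrac25$.

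The second step is to bound $f$. Since $f$ decreases in both variables, its maximum over $Q_\ell$ is attained on the lower-left boundary, which consists of the edges from $\Psi(B)$ to $\Psi(A)$ and from $\Psi(A)$ to $\Psi(D)$. Both edges lie on lines of the form $\beta=-c^2\alpha+c$ or their reflections. On $\beta=-c_\ell^2\alpha+c_\ell$, Lemma~\ref{valuesfon''horizontallines''} shows that $f$ increases in $\alpha$, so the maximum sits at the vertex $\Psi(D)$. On the other edge the monotonicity in $\beta$ pushes the maximum to $\Psi(D)$ as well. Evaluating there, using $f$ linear along the line through $\Psi(D)$ and~\eqref{cell2}, the computation~\eqref{cell3} gives
\[
f\le \frac{c_{\ell-2}-c_\ell+1-c_{\ell-2}c_\ell}{1+c_{\ell-2}c_\ell}=\frac{c_\ell}{1+c_{\ell-2}c_\ell}.
\]
Consequently, at every point of $Q_\ell$ we have $f\le B_{\ell,\ell}$, which already gives the minimum bound. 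This is the step I expect to be the main obstacle: justifying that the maximum of $f$ is at $\Psi(D)$ requires care with the region of validity of Lemma~\ref{valuesfon''horizontallines''} and with the sign of $\partial f/\partial\alpha$ on $Q_\ell$.

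The third step is the strict inequality $B_{\ell,\ell}<1/\sqrt5$. Write $B_{\ell,\ell}=h(c_{\ell-2})$ with $h(c)=\tfrac{1+c}{c^2+2c+2}$. The function $h$ is decreasing, $c_{\ell-2}>g$ for odd $\ell$, and $h(g)=\tfrac{g}{1+g^2}=1/\sqrt5$, so $B_{\ell,\ell}<1/\sqrt5$.

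Finally, for optimality, at $\Psi(D)$ we have $\beta=B_{\ell,\ell}$, $\alpha>B_{\ell,\ell}$, and $f=B_{\ell,\ell}$. Hence the minimum equals $B_{\ell,\ell}$ in the limit at that vertex. By Remark~\ref{RemarkOnBestValue}, points $(t_n,v_n)$ accumulate arbitrarily close to $D$, so no smaller constant works.
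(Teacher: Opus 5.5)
Your proposal is correct and follows the paper's own argument. On $\Psi(W_\ell\times W_\ell)$ both $\alpha$ and $\beta$ are at least $B_{\ell,\ell}$, $f\le f(\Psi(D))=B_{\ell,\ell}$ by~\eqref{cell3}, $B_{\ell,\ell}=h(c_{\ell-2})<h(g)=1/\sqrt{5}$, and sharpness comes from points near $\Psi(D)$.

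The one step to tighten is the edge from $\Psi(B)$ to $\Psi(A)$: there $\alpha$ increases while $\beta$ decreases, so monotonicity in $\beta$ alone does not locate the maximum of $f$. Instead, apply Lemma~\ref{valuesfon''verticallines''} with $d=c_\ell$; its range covers this edge because $c_{\ell-2}\le 1$ and $c_\ell\ge\tfrac12$. That lemma shows $f$ is increasing in $\alpha$ along the edge, so its maximum there is $f(\Psi(A))$, and $f(\Psi(A))\le f(\Psi(D))$ by your argument on the other edge.
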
\medskip\

($ii$) Now assume that $\ell > h\geq 1$, where $\ell$ and $h$ are \emph{odd}. Note that $W_{\ell}\times W_h$ is a rectangle in $\Omega$ ``above'' the diagonal $v=t$ (as $g<c_{\ell}<c_h$), with vertices
$$
A:\ (c_{\ell},c_h),\,\, B:\, (c_{\ell -2},c_h),\,\, C:\, (c_{\ell -2},c_{h-2}),\,\, \text{ and } D:\, (c_{\ell},c_{h-2}),
$$
which are the bottom-left, bottom-right, top-right resp.\ top-left vertices of $W_{\ell}\times W_h$. Under $\Psi$, cf.~\eqref{PsiMap}, we see that $W_{\ell}\times W_h$ is mapped to a quadrilateral ``under'' the diagonal $\beta = \alpha$ in $\Delta$, with vertices
$$
\Psi (A)=\left( \frac{c_h}{1+c_{\ell}c_h}, \frac{c_{\ell}}{1+c_{\ell}c_h}\right),\quad \Psi (D) = \left( \frac{c_{h-2}}{1+c_{\ell}c_{h-2}}, \frac{c_{\ell}}{1+c_{\ell}c_{h-2}}\right),
$$
and
$$
\Psi (C) = \left( \frac{c_{h-2}}{1+c_{\ell -2}c_{h-2}}, \frac{c_{\ell-2}}{1+c_{\ell -2}c_{h-2}}\right),\quad \Psi (B) = \left( \frac{c_h}{1+c_{\ell-2}c_h}, \frac{c_{\ell -2}}{1+c_{\ell-2}c_h} \right),
$$
where $\Psi (A)$, $\Psi (D)$, $\Psi (C)$ resp.\ $\Psi (B)$ are the lower-left, lower-right, top-right resp.\ top-left vertices of $\Psi (W_{\ell}\times W_h)$. Since $\Psi (W_{\ell}\times W_h)$ is ``under'' the diagonal $\beta = \alpha$, we see that for $(\alpha,\beta)\in\Psi (W_{\ell},W_h)$ we have $\beta \leq \alpha$ and $\alpha \geq \frac{c_h}{1+c_{\ell-2}c_h}>\tfrac{1}{\sqrt{5}}$. Furthermore, from~\eqref{derivatesarenegative}, Lemma~\ref{valuesfon''horizontallines''}, and Lemma~\ref{valuesfon''verticallines''} it follows that in $\Psi (W_{\ell}\times W_h)$ the function $f$ takes its maximum in $\Psi (D)$, the maximum being
$$
f(\Psi (D)) = \frac{(1-c_{\ell})(1+ c_{h-2})}{1+c_{\ell}c_{h-2}}.
$$
\begin{example}\label{ex:1}{\rm 
If $\ell = 3$, $h=1$, we have (where we round off some of the fractions)
$$
\Psi (A) = \left( \tfrac{16}{34},\tfrac{15}{34}\right) = (0.47058,0.44117),\quad \Psi (D) = \left( \tfrac{8}{13},\tfrac{5}{13} \right) = (0.61538,0.38461),
$$
and
$$
\Psi (C) = \left( \tfrac{3}{5},\tfrac{2}{5}\right) = (0.6,\, 0.4),\quad \Psi (B) = \left( \tfrac{6}{13},\tfrac{6}{13} \right) = (0.46153,0.46153).
$$
Note that
$$
f(\Psi (D)) = f(\tfrac{8}{13},\tfrac{5}{13}) = \tfrac{6}{13} > \tfrac{5}{13},
$$
where $\tfrac{5}{13}$ is the second coordinate of $\Psi (D)$. We saw in~Lemma~\ref{valuesfon''horizontallines''} that for points $(\alpha ,\beta )$ in the bottom line $\beta = -\tfrac{25}{64}\alpha +\tfrac{5}{8}$of $V_3'$, with $0\leq \alpha\leq \tfrac{8}{13}$, we have
$$
f(\alpha ,\beta ) = \tfrac{9}{64}\alpha + \tfrac{3}{8}.
$$
So in the bottom line $\beta = -\tfrac{25}{64}\alpha +\tfrac{5}{8}$ there is a point $(\alpha^*,\beta^*)$ where the value of $\beta$ is equal to $f(\alpha ,\beta )$; see~\eqref{alpha*beta*}.  This intersection point is here $(\tfrac{8}{17}, \tfrac{15}{34})$, which is $\Psi (A)$. Since $f\left( \tfrac{8}{17}, \tfrac{15}{34} \right) = \tfrac{15}{34}$, we see that if $(t_n,v_n)\in W_3\times W_1$,
$$
\min \{ \Theta_{n-1},\Theta_n,\theta_{n+1}\} \leq B_{3,1} = \tfrac{15}{34}.
$$
}\hfill $\triangle$
\end{example}

Actually, what we find in Example~\ref{ex:1} is not a coincidence! We will use the following lemma.
\begin{lemma}\label{lemmaOnh=ell-20dd}
Let $\ell, h$ be odd, such that $h=\ell -2\geq 1$. Then
\begin{equation}\label{h=ell-20dd}
\frac{c_h}{1+c_{\ell}c_h} = \frac{2c_{\ell}-1}{2c_{\ell}^2-2c_{\ell}+1}. 
\end{equation}
\end{lemma}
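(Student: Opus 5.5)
The plan is to eliminate $c_h$ using the recurrence~\eqref{cell2} and reduce~\eqref{h=ell-20dd} to an algebraic identity in the single variable $c_{\ell}$. Since $h=\ell-2$, relation~\eqref{cell2} reads
$$
c_{\ell}=\frac{1+c_h}{2+c_h}.
$$
This holds for every $\ell\in\N$, because $c_{\ell}=[0;1^{\ell+2}]$ is obtained from $c_{\ell-2}$ by prepending two partial quotients equal to $1$.

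First I will invert this Möbius relation to express $c_h$ through $c_{\ell}$. Clearing denominators gives $c_{\ell}(2+c_h)=1+c_h$, hence $c_h(c_{\ell}-1)=1-2c_{\ell}$, that is
$$
c_h=\frac{2c_{\ell}-1}{1-c_{\ell}}.
$$
Division by $1-c_{\ell}$ is allowed because $\tfrac12\le c_{\ell}<1$ for $\ell\ge 1$, by~\eqref{cell}.

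Next I will substitute this expression into the left-hand side of~\eqref{h=ell-20dd}. Multiplying numerator and denominator by $1-c_{\ell}$ gives
$$
\frac{c_h}{1+c_{\ell}c_h}=\frac{2c_{\ell}-1}{(1-c_{\ell})+c_{\ell}(2c_{\ell}-1)}=\frac{2c_{\ell}-1}{2c_{\ell}^2-2c_{\ell}+1},
$$
which is exactly the right-hand side. The denominator is nonzero, since $2c^2-2c+1=c^2+(1-c)^2>0$.

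There is no genuine obstacle here; the only care needed is that~\eqref{cell2} is applied with the correct index shift ($h=\ell-2$) and that the denominators are nonzero. Parity of $\ell$ and $h$ plays no role in this identity. It matters only for the interpretation: the right-hand side is the value $\alpha^*$ of~\eqref{alpha*beta*}, so the lemma says that for $h=\ell-2$ the point $\Psi(A)$ lies exactly at $(\alpha^*,\beta^*)$ on the bottom line of $V_{\ell}'$. This is what Example~\ref{ex:1} exhibits for $\ell=3$, $h=1$.
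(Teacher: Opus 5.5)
Your proposal is correct and is essentially the paper's argument: both reduce the identity to one variable via \eqref{cell2}. The only difference is direction. The paper sets $c=c_h$ and rewrites both sides as $\frac{c(2+c)}{c^2+2c+2}$, while you invert \eqref{cell2} to $c_h=\frac{2c_{\ell}-1}{1-c_{\ell}}$ (the same inverse the paper uses later) and transform only the left-hand side.
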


\begin{proof}
Setting $c=c_h=c_{\ell -2}$, we have from~\eqref{cell2} that $c_{\ell}=\frac{1+c}{2+c}$. But then we have
\begin{equation}\label{eq1:lemmaOnh=ell-20dd}
\frac{c_h}{1+c_{\ell}c_h} = \frac{c}{1+c\cdot \frac{1+c}{2+c}} = \frac{c(2+c)}{c^2 + 2c + 2}.
\end{equation}
Furthermore,
\begin{eqnarray*}
\frac{2c_{\ell}-1}{2c_{\ell}^2 - 2c_{\ell} +1} &=& \frac{\frac{2+2c}{2+c}-1}{2\left(  \frac{1+c}{2+c}\right)^2 -  \frac{2(1+c)}{2+c} + \frac{(2+c)^2}{(2+c)^2}}\\
&=& \frac{c(c+2)}{2(1+c)^2 -2(1+c)(2+c) + (2+c)^2}\\
&=& \frac{c(2+c)}{c^2 + 2c + 2}.
\end{eqnarray*}
From this and~\eqref{eq1:lemmaOnh=ell-20dd} the result~\eqref{h=ell-20dd} follows.
\end{proof}

Thus we see that $\alpha^*$ from~\eqref{alpha*beta*} is equal to the first coordinate of $\Psi (A)$, and since $(\alpha^*,\beta^*)$ and $\Psi (A)$ are on the same line $\beta = -c_{\ell}^2\alpha+c_{\ell}$, we must have that both points are equal and that, in particular, the second coordinate of both points must be equal. Therefore, an immediate consequence of this lemma is that if $(t_n,v_n)\in W_{\ell}\times W_h$, with $\ell , h$ \emph{odd} and $\ell > h\geq 1$, we find that $\Theta_n> \frac{c_{\ell}(1-c_{\ell})}{2c_{\ell}^2-2c_{\ell}+1}$, but then $\Theta_{n+1} = f(\Theta_{n-1},\Theta_{n+1})\leq \frac{c_{\ell}(1-c_{\ell})}{2c_{\ell}^2-2c_{\ell}+1}$, or $\Theta_n\leq \frac{c_{\ell}(1-c_{\ell})}{2c_{\ell}^2-2c_{\ell}+1}$. Note that in this case, always $\Theta_{n-1}>\tfrac{1}{\sqrt{5}}$. Thus, we find for $n\in\N$ and $(t_n,v_n)\in W_{\ell}\times W_h$, with $\ell, h$ \emph{odd} and $\ell > h\geq 1$,
$$
\min \{ \Theta_{n-1},\Theta_n,\Theta_{n+1}\}\leq B_{\ell,h} = \frac{c_{\ell}(1-c_{\ell})}{2c_{\ell}^2-2c_{\ell}+1} = \beta^* <\frac{1}{\sqrt{5}}.
$$
Note that $\Psi (A)$ is the lower left vertex of $\Psi (W_{\ell}\times W_{\ell -2})$, and the second coordinate of $\Psi (A)$ is given by $\frac{c_{\ell}}{1+c_{\ell}c_{\ell -2}}$. Furthermore, if $\ell\geq 5$ \emph{odd}, the upper left vertex of $\Psi (W_{\ell}\times W_{\ell -4})$ is given by $( \frac{c_{\ell-4}}{1+c_{\ell-2}c_{\ell-4}}, \frac{c_{\ell-2}}{1+c_{\ell-2}c_{\ell-4}})$. We have the following lemma.

\begin{lemma}\label{ComparisonEndpoints}
For $\ell\geq 3$ odd, we have the following,
$$
\frac{c_{\ell}}{1+c_{\ell}c_{\ell -2}} > \frac{c_{\ell-2}}{1+c_{\ell-2}c_{\ell-4}}.
$$
\end{lemma}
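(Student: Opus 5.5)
We reduce everything to a single real variable via the recursion \eqref{cell2}. Set $c=c_{\ell-2}$. Then $c_\ell=\frac{1+c}{2+c}$. For $c_{\ell-4}$ we invert \eqref{cell2}: from $c=\frac{1+c'}{2+c'}$ with $c'=c_{\ell-4}$ we get $c'=\frac{2c-1}{1-c}$. For $\ell=3$ this uses $c_{-1}=1$ and $c_1=\tfrac23$, which is consistent: $\frac{2\cdot 2/3-1}{1/3}=1$. Both sides of the inequality are then rational functions of $c$ alone. Since $\ell-2\geq 1$ is odd, Remark~\ref{remarkonWell} places $c=c_{\ell-2}$ in $(g,\tfrac23]$.

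For the left-hand side we already know from \eqref{eq1:lemmaOnh=ell-20dd} that
$$\frac{c_\ell}{1+c_\ell c_{\ell-2}}=\frac{1+c}{c^2+2c+2}=h(c).$$
For the right-hand side we substitute $c'=\frac{2c-1}{1-c}$:
$$\frac{c}{1+c\cdot\frac{2c-1}{1-c}}=\frac{c(1-c)}{1-c+2c^2-c}=\frac{c(1-c)}{2c^2-2c+1}.$$
This is exactly $\beta^*$ from \eqref{alpha*beta*} with $c$ in place of $c_\ell$, which is a useful consistency check.

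The inequality is therefore equivalent to
$$(1+c)(2c^2-2c+1) > c(1-c)(c^2+2c+2),$$
since all denominators are positive. Expanding, the left side is $2c^3-c+1$, and the right side is $-c^4-c^3+2c$. The difference is
$$P(c)=c^4+3c^3-3c+1,$$
so it remains to show $P(c)>0$ for $c\in(g,\tfrac23]$.

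This sign check is the only real step. I would first try to factor through $c^2+c-1$, whose positive root is $g$. Polynomial division gives
$$P(c)=(c^2+c-1)(c^2+2c-1)+c^2.$$
On $(g,1)$ both quadratic factors are positive: $c^2+c-1>0$ because $c>g$, and $c^2+2c-1>0$ because $c>\sqrt2-1$. Hence $P(c)>c^2>0$. In fact $P(c)>0$ holds for all $c\geq g$, including the limit point, so there is no delicate boundary case.

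If the factorization had failed, the fallback would be direct bounding on $[0.6,0.67]$. There $c^4+3c^3\geq 0.13+0.648$ and $3c\leq 2.01$, which does not immediately give positivity, so the exact factorization is the preferred route. Having it, the strict inequality of the lemma follows.
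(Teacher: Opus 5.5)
Your reduction to the single variable $c=c_{\ell-2}$ is sound. The two sides are $\frac{1+c}{c^2+2c+2}$ and $\frac{c(1-c)}{2c^2-2c+1}$, and the first is $h(c_{\ell-2})$ from case ($i_b$); note that \eqref{eq1:lemmaOnh=ell-20dd} concerns $\frac{c_{\ell-2}}{1+c_\ell c_{\ell-2}}$, not this quantity. Your expansion to $P(c)=c^4+3c^3-3c+1$ is also correct.

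The error is in the step you call the only real one: the identity $P(c)=(c^2+c-1)(c^2+2c-1)+c^2$ is false. Multiplying out gives $(c^2+c-1)(c^2+2c-1)=c^4+3c^3-3c+1$, so in fact
$$P(c)=(c^2+c-1)(c^2+2c-1)$$
with no remainder (check at $c=1$: $P(1)=2$, not $3$). Consequently your bound $P(c)>c^2$ is false, already for $\ell=3$: $P(\tfrac23)=\tfrac{7}{81}<\tfrac49$. Your claim that $P>0$ at the limit point, so that there is ``no delicate boundary case'', is also wrong. We have $P(g)=0$, which is forced because both sides of the lemma tend to $\tfrac{1}{\sqrt5}$ as $\ell\to\infty$. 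The strict inequality rests entirely on $c_{\ell-2}>g$ strictly, and the margin $P(c_{\ell-2})$ tends to $0$. For the same reason your numerical fallback could never work: $P$ is actually negative on $[0.6,g)$, e.g.\ $P(0.6)<0$.

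The repair is immediate. With the correct factorization, your own observation that both factors are positive for $c>g$, together with $c_{\ell-2}>g$ because $\ell-2$ is odd, proves the lemma.

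Once corrected, your route is a legitimate alternative to the paper's. The paper never forms a polynomial. It writes the left side as $s(c_{\ell-2})$ and the right side as $s(c_{\ell-4})$, with $s(x)=\frac{x+1}{x^2+2x+2}$, by applying \eqref{cell2} at $\ell$ and at $\ell-2$. It then concludes from $s'<0$ and $c_{\ell-2}<c_{\ell-4}$.

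Your factorization is that argument made explicit. One has $s(x)-s(y)=\frac{(y-x)(xy+x+y)}{(x^2+2x+2)(y^2+2y+2)}$. With $x=c$ and $y=\frac{2c-1}{1-c}=c_{\ell-4}$, the two factors become $\frac{c^2+c-1}{1-c}$ and $\frac{c^2+2c-1}{1-c}$. So your factor $c^2+c-1$ is, up to a positive factor, the difference $c_{\ell-4}-c_{\ell-2}$ that the paper's monotonicity argument uses.

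The paper's version is shorter and avoids the algebra where your slip occurred. Yours shows that, for $c\in(\sqrt2-1,1)$, the inequality holds if and only if $c>g$, with equality exactly at the fixed point $g$ of the recursion \eqref{cell2}.
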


\begin{proof}
Replacing $\ell$ by $\ell-2$ in~\eqref{cell2}, we find
$$
c_{\ell-2} = \frac{1+c_{\ell -4}}{2+c_{\ell -4}};
$$
in view of this, we define a function $s$ (on an appropriate domain, e.g.\ $[0,1]$) as
$$
s(x) = \frac{\frac{x+1}{x+2}}{1+x\cdot \frac{x+1}{x+2}} = \frac{x+1}{x^2+2x+2}.
$$
Obviously, $s'(x)<0$ for $x>0$, and since $c_{\ell}<c_{\ell-2}$ (recall that $\ell$ is odd!), we see that
$$
\frac{c_{\ell}}{1+c_{\ell}c_{\ell -2}}=s(c_{\ell-2}) > s(c_{\ell-4}) = \frac{c_{\ell-2}}{1+c_{\ell-2}c_{\ell-4}} ,
$$
which is what we set out to prove.
\end{proof}

An immediate consequence of Lemma~\ref{ComparisonEndpoints} is the following (partial) result. Recall that $\frac{c_{\ell -2}}{1+c_{\ell-2}c_h}$ is the second coordinate of the top left vertex of $\Psi (W_{\ell}\times W_h)$.
\begin{proposition}\label{ell>hgeq1odd}\label{ell and h both odd ell lager than h} Let $\ell ,h$ be odd positive integers, where $\ell >h\geq 1$. Let $(t_n,v_n)\in W_{\ell}\times W_h$, i.e., $(\Theta_{n-1},\Theta_n)\in \Psi (W_{\ell}\times W_h)$. Then
\begin{enumerate}
\item[($i$)]
If $h=\ell-2$,
$$
\min \{ \Theta_{n-1},\Theta_n,\Theta_{n+1}\}\leq B_{\ell,\ell-2} = \frac{c_{\ell}(1-c_{\ell})}{2c_{\ell}^2-2c_{\ell}+1} = \beta^* <\frac{1}{\sqrt{5}}.
$$
\item[($ii$)]
If $1\leq h < \ell-2$, 
$$
\min \{ \Theta_{n-1},\Theta_n,\Theta_{n+1}\}\leq B_{\ell,h} = \frac{c_{\ell -2}}{1+c_{\ell-2}c_h} < \frac{c_{\ell}(1-c_{\ell})}{2c_{\ell}^2-2c_{\ell}+1} <\frac{1}{\sqrt{5}}.
$$
\end{enumerate}
These constants are best possible. I.e., they cannot be replaced by smaller ones.
\end{proposition}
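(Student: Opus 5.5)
Throughout, I work in the $(\alpha,\beta)$-coordinates: $(\Theta_{n-1},\Theta_n,\Theta_{n+1})=(\alpha,\beta,f(\alpha,\beta))$, and the quadrilateral $Q=\Psi(W_\ell\times W_h)$ has vertices $\Psi(A),\Psi(B),\Psi(C),\Psi(D)$ as computed above. Since $\ell>h$ are odd, we have $c_\ell<c_h$, so $Q$ lies on or under the diagonal. Moreover $\alpha\geq \frac{c_h}{1+c_{\ell-2}c_h}>\frac1{\sqrt5}$ on $Q$, so $\Theta_{n-1}$ never realizes the minimum. The task therefore reduces to bounding $\min\{\beta,f(\alpha,\beta)\}$ on $Q$.

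For part ($i$), $h=\ell-2$, I follow the discussion after Lemma~\ref{lemmaOnh=ell-20dd}. That lemma shows that $\Psi(A)=(\alpha^*,\beta^*)$, the point on the bottom line $\beta=-c_\ell^2\alpha+c_\ell$ where $\beta=f$. By~\eqref{derivatesarenegative}, $f$ decreases in each coordinate. Every point of $Q$ satisfies $\beta\geq -c_\ell^2\alpha+c_\ell$ and $\alpha\geq$ the $\alpha$-coordinate along $\Psi(\{t=c_\ell\})$.

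The key step is a dichotomy. If $\beta\le\beta^*$ we are done. Otherwise $\beta>\beta^*$, and I move from $(\alpha,\beta)$ downward to the bottom line along a curve on which both coordinates do not decrease in the wrong direction; concretely, I compare with the point on the bottom line having the same $\alpha$. On the bottom line, Lemma~\ref{valuesfon''horizontallines''} gives $f=(1-c_\ell)^2\alpha+1-c_\ell$, which is increasing in $\alpha$. Since $\alpha\ge\alpha^*$ there, we get $f\le f(\text{bottom point})$, and this bound needs care.

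The cleaner route is to check that on $Q$ the region $\{\beta>\beta^*\}$ lies in $\{f\le\beta^*\}$. The level curve $f=\beta^*$ passes through $\Psi(A)$, and $f$ is monotone decreasing in $\beta$. So I verify that the level set $f=\beta^*$ separates correctly, using the explicit values of $f$ on the edge lines through Lemmas~\ref{valuesfon''horizontallines''} and~\ref{valuesfon''verticallines''}. This is the main obstacle: a rigorous justification that $\max_{Q\cap\{\beta\ge\beta^*\}} f\le \beta^*$. I would try to settle it by noting that the horizontal line $\beta=\beta^*$ meets $Q$ only at $\Psi(A)$ (the lowest point in $\beta$, being the image of the bottom-left corner), so in fact $\beta\ge\beta^*$ on all of $Q$. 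Hence on $Q$ it suffices to show $f\le\beta^*$. Since $f$ is decreasing in both variables, its maximum on $Q$ is attained at the vertex with the smallest coordinates, namely $\Psi(A)$, where $f=\beta^*$. If instead the maximum sits at $\Psi(D)$, as the paper states, I compare $f(\Psi(D))$ with $\beta^*$ directly via \eqref{cell2}.

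For part ($ii$), $h<\ell-2$, the bound is the $\beta$-coordinate of $\Psi(B)$, which is the top-left vertex. Here I show that $\beta\le\frac{c_{\ell-2}}{1+c_{\ell-2}c_h}$ wherever $f$ is large, by splitting $Q$ using the vertical images $\Psi(\{v=c_{\ell'}\})$. I observe that $Q\subset\bigcup_{h\le h'\le \ell-2,\ h'\text{ odd}}\Psi(W_\ell\times W_{h'})$ is false; instead $Q$ is refined, and the relevant extremal comparison is Lemma~\ref{ComparisonEndpoints}. That lemma gives $\frac{c_{\ell-2}}{1+c_{\ell-2}c_h}\le\frac{c_{\ell-2}}{1+c_{\ell-2}c_{\ell-4}}<\frac{c_\ell}{1+c_\ell c_{\ell-2}}$, and the latter is the second coordinate of $\Psi(A)$ for $h=\ell-2$, which equals $\beta^*$. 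I then show that the maximum of $f$ on $Q$, attained at $\Psi(D)$, is at most the maximal $\beta$, so the minimum is bounded by $\max_Q\beta$, which is attained at $\Psi(B)$.

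Optimality in both parts follows as in Remark~\ref{RemarkOnBestValue}. At the extremal vertex, the other two quantities exceed the bound, and Jager's genericity (Remark~\ref{RemarkOnJager}) ensures that neighbourhoods of that vertex are visited.
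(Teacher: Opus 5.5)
Your part ($i$) fails at the step you flagged as the main obstacle: both facts you use to dispose of it are false. With $a_{n+1}=1$ one has $t_{n+1}=\frac1t-1$ and $v_{n+1}=\frac1{1+v}$, so by \eqref{thetan}, $\Theta_n=\frac{t}{1+tv}$ and $\Theta_{n+1}=\frac{(1-t)(1+v)}{1+tv}$. The first is increasing in $t$ and decreasing in $v$; the second is decreasing in $t$ and increasing in $v$. Hence on $W_\ell\times W_{\ell-2}$ the smallest $\beta$ and the largest $f$ both sit at the corner $D=(c_\ell,c_{\ell-4})$, not at $A$. Also, $\Psi(A)$ is not componentwise smaller than $\Psi(D)$: it has smaller $\alpha$ but larger $\beta$, so monotonicity of $f$ cannot place the maximum there. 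For $\ell=3$: $\Psi(D)=(\tfrac{8}{13},\tfrac{5}{13})$, with $\beta=\tfrac{5}{13}<\tfrac{15}{34}=\beta^*$ and $f(\Psi(D))=\tfrac{6}{13}>\beta^*$. So $Q$ genuinely straddles the level $\beta^*$. Neither ``$\beta\ge\beta^*$ on $Q$'' nor ``$f\le\beta^*$ on $Q$'' holds, and your fallback comparison of $f(\Psi(D))$ with $\beta^*$ cannot succeed.

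What is needed is the dichotomy you started with: $f\le\beta^*$ on $Q\cap\{\beta>\beta^*\}$. Split by $\alpha$. If $\alpha\le\alpha^*$, drop vertically to the bottom line $\beta_\alpha=-c_\ell^2\alpha+c_\ell$ and get $f(\alpha,\beta)\le f(\alpha,\beta_\alpha)=(1-c_\ell)^2\alpha+1-c_\ell\le\beta^*$. If $\alpha\ge\alpha^*$, then $\beta>\beta^*$ and \eqref{derivatesarenegative} give $f(\alpha,\beta)\le f(\alpha^*,\beta^*)=\beta^*$. You applied the vertical drop in the case $\alpha\ge\alpha^*$, where it only yields a bound $\ge\beta^*$. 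This two-case argument is what the paper's remark after \eqref{alpha*beta*} and the sentence following Lemma~\ref{lemmaOnh=ell-20dd} encode. A shorter route: for fixed $v$, $\Theta_n$ and $\Theta_{n+1}$ coincide at $t=\frac{1+v}{2+v}$ with common value $\frac{1+v}{v^2+2v+2}$. Hence $\min\{\Theta_n,\Theta_{n+1}\}\le\frac{1+v_n}{v_n^2+2v_n+2}$, and this decreasing function of $v_n\ge c_{\ell-2}$ is at most its value at $c_{\ell-2}$, which is $\beta^*$.

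In part ($ii$) the upper bound needs no information about $f$: $\min\le\beta\le\max_Q\beta=\pi_2(\Psi(B))=\frac{c_{\ell-2}}{1+c_{\ell-2}c_h}$. Your chain through Lemma~\ref{ComparisonEndpoints}, using $c_h\ge c_{\ell-4}$, is exactly the paper's comparison with $\beta^*$, so no splitting of $Q$ is needed. However, your claim that $\max_Q f$ is at most $\max_Q\beta$ is false, and it contradicts the sharpness you assert. For $h\le\ell-4$, every $(t,v)\in W_\ell\times W_h$ satisfies $t\le c_{\ell-2}\le c_{h+2}=\frac{1+c_h}{2+c_h}\le\frac{1+v}{2+v}$, which is equivalent to $f\ge\beta$. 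So $f\ge\beta$ on all of $Q$; for $\ell=5$, $h=1$ one gets $f(\Psi(D))=\tfrac{8}{17}>\tfrac{15}{34}=\pi_2(\Psi(B))$ (compare Example~\ref{NotUsingf}). Sharpness of $\pi_2(\Psi(B))$ comes precisely from $f(\Psi(B))\ge\pi_2(\Psi(B))$ together with $\alpha>\tfrac1{\sqrt5}$. Were your inequality strict, $\max_Q f$ would be a smaller valid bound. Similarly, in part ($i$) the value of $f$ at $\Psi(A)$ does not exceed the bound but equals it, so sharpness there follows by continuity.
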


\begin{proof}
Recall that the lower right vertex of $\Psi (W_{\ell}\times W_{\ell})$ has as second coordinate $\beta^*=\frac{c_{\ell}}{1+c_{\ell-2}c_{\ell}}$, and that the lower right vertex of $\Psi (W_{\ell}\times W_{\ell})$ is also the lower left vertex of $\Psi (W_{\ell}\times W_{\ell-2})$. So, on $\Psi (W_{\ell}\times W_{\ell-2})$ we cannot improve our earlier result, Proposition~\ref{ell=h odd} (and this is part (1) of the present Proposition). However, note that the upper left vertex of $\Psi (W_{\ell}\times W_{\ell -2})$ (so $h=\ell -2$) has as second coordinate $\tfrac{c_{\ell-2}}{1+c_{\ell-2}c_{\ell-4}}$, and due to Lemma~\ref{ComparisonEndpoints} we know that this value is \emph{smaller} than $\beta^*$. Since for $1\leq h\leq \ell-2$ the value of the second coordinate of the upper left vertex of $\Psi (W_{\ell}\times W_h)$ decreases as $h$ odd decreases from $\ell-2$ to $1$ (these upper left vertices are all on the same monotonically decreasing line $\beta = -c_{\ell-2}^2\alpha+c_{\ell-2}$), we find that statement ($ii$) holds.
\end{proof}\smallskip\

\begin{example}\label{NotUsingf}{\rm
Let $\ell = 7$ and $h=3$. As we saw before, the upper left vertex of $\Psi (V_7\times W_3)$ is\footnote{Again, we rounded off some of the fractions.} $\left( \tfrac{105}{233}, \tfrac{104}{233}\right) = (0.450643, 0.44635193)$, while the lower left vertex of $\Psi (V_7\times W_3)$ is $\left( \tfrac{55}{122}, \tfrac{136}{305} \right)=(0.4508196, 0.445901639)$. One easily shows that $\beta^*=\tfrac{714}{1597}=0.44708829$, and since
$$
\tfrac{104}{233} = 0.44635193 < 0.44708829 = \tfrac{714}{1597},
$$
using the second coordinate of the upper left vertex of $\Psi (V_7\times W_3)$ given an improvement over $\beta^*$. Due to Proposition~\ref{ell>hgeq1odd} we know that using $f$ on the boundary of $\Psi (V_7\times W_3)$ does not give any improvement. Indeed, as $f\left( \tfrac{105}{233}, \tfrac{104}{233}\right) = \tfrac{104}{233}$, which is the second coordinate of this upper left hand vertex, and on $\Psi (V_7\times W_3)$ this is the lowest value $f$ attains due to Lemmas~\ref{valuesfon''horizontallines''}  and~\ref{valuesfon''verticallines''}. In fact, on $\Psi (V_7\times W_1)$ the situation is even worse! The top left vertex of this set is the top right vertex of $\Psi (V_7\times W_3)$, and is given by 
$$
\Psi \left( \tfrac{34}{55}, \tfrac{2}{3}\right) = \left( \tfrac{110}{233},\tfrac{102}{233}\right) = (0.472,0.43776824).
$$
However, $f\left( \tfrac{110}{233},\tfrac{102}{233}\right) = \tfrac{105}{233} > \tfrac{102}{233}$, so on $\Psi (V_7\times W_1)$ the lowest value $f$ attains on this set is \emph{greater} than the highest possible value of $\beta$ when $(\alpha,\beta)\in\Psi (V_7\times W_1)$. That this would be the case is indicated in the proof of Proposition~\ref{ell>hgeq1odd}.}\hfill $\triangle$
\end{example}

\begin{figure}[h]
$$
\beginpicture
    \setcoordinatesystem units <3.25cm,3.25cm>
    \setplotarea x from 2.4 to 10, y from 3.7 to 6.3
\put {\tiny{$(1,1)$}} at 4.8 4.8
\put {\tiny{$(1,3)$}} at 4.44 4.8
\put {\tiny{$(3,1)$}} at 4.8 4.45
\put {\tiny{$(3,3)$}} at 4.55 4.55
\setlinear \plot
6.3 3.7 3.7 6.3
/
\setlinear \plot
6 4 3.7 5.022222
/

\setlinear \plot
4 6 5.022222 3.7
/
\setlinear \plot
6.1538 3.84615 3.7 4.8046875
/
\setlinear \plot
3.84615 6.1538 4.8046875 3.7
/
\setlinear \plot
6.17647 3.82352941 3.7 4.77256235
/
\setlinear \plot
3.82352941 6.17647 4.77256235 3.7
/
\setlinear \plot
6.17977528 3.82022471 3.7 4.76786776
/
%\setlinear \plot
%3.82022471 6.17977528 4.76786776 3.7
%/

\setdots
\setlinear \plot
3.7 3.7 5 5
/
\endpicture
$$ \caption[triangle]{The sets $\Psi (V_{\ell}\times H_h)$, for $\ell, h\in \{ 1,3,5\}$. For $\ell, h\in \{ 1,3\}$ we indicated these sets by $(\ell,h)$. Still visible (but not indicated) are the sets $(5,5)$, $(5,3)$, $(5,1)$, $(3,5)$, and $(1,5)$. In case $\ell$ or $h$ are equal to 7 (or larger) there is ``no space''. These sets are ``thinner'' than pixel width and become a ``fat line'' in this figure.}\label{ImahesOfPsi(VtimesH}
\end{figure}
\smallskip\

\noindent
($iii$) Now assume that $h>\ell \geq 1$, where $\ell$ and $h$ are odd. As in case $(ii$), note that $W_{\ell}\times W_h$ is a rectangle in $\Omega$, but now  ``under'' the diagonal $v=t$ (as $g<c_h<c_{\ell}$), with vertices $A:\ (c_{\ell},c_h)$, $B:\, (c_{\ell -2},c_h)$, $C:\, (c_{\ell -2},c_{h-2})$, and $D:\, (c_{\ell},c_{h-2})$, which are the bottom-left, bottom-right, upper-right resp.\ upper-left vertices of $W_{\ell}\times W_h$. Under $\Psi$, cf.~\eqref{PsiMap}, we see that $W_{\ell}\times W_h$ is mapped to a quadrilateral ``above'' the diagonal $\beta = \alpha$ in $\Delta$, with vertices similar to the ones before:
$\Psi (A)$, $\Psi (D)$, $\Psi (C)$ resp.\ $\Psi (B)$ are again the lower-left, lower-right, upper-right resp.\ upper-left vertices of $\Psi (W_{\ell}\times W_h)$.

Obviously, we have
$$
\bigcup_{h=\ell+2}^{\infty} \Psi ( [c_{\ell},c_{\ell-2}]\times [c_h,c_{h-2}]) = \Psi ( [c_{\ell},c_{\ell-2}]\times [g,c_{\ell}]),
$$
where $\Psi ( [c_{\ell},c_{\ell-2}]\times [g,c_{\ell}])$ is a quadrilateral with vertices 
$$
\Psi (A^*) = \Psi (c_{\ell},g) = \left( \tfrac{g}{1+c_{\ell}g}, \tfrac{c_{\ell}}{1+c_{\ell}g}\right),\quad \Psi (D^*) = \Psi (c_{\ell},c_{\ell})=\left( \tfrac{c_{\ell}}{1+c_{\ell}^2},\tfrac{c_{\ell}}{1+c_{\ell}^2}\right).
$$
which are the lower left, resp.\ lower right vertices of $\Psi ( [c_{\ell},c_{\ell-2}]\times [g,c_{\ell}])$, and 
$$
\Psi (B^*) = \Psi (c_{\ell-2},g) = \left( \tfrac{g}{1+c_{\ell-2}g}, \tfrac{c_{\ell-2}}{1+c_{\ell-2}g}\right), \quad \Psi (C^*) = \Psi (c_{\ell-2},c_{\ell}) = \left( \tfrac{c_{\ell}}{1+c_{\ell-2}c_{\ell}}, \tfrac{c_{\ell-2}}{1+c_{\ell-2}c_{\ell}}  \right),
$$ 
which are the top left, resp.\ top right vertices of $\Psi ( [c_{\ell},c_{\ell-2}]\times [g,c_{\ell}])$. This quadrilateral is bounded by straight lines $\beta = -c_{\ell}^2\alpha+c_{\ell}$ (bottom), $\beta = -c_{\ell-2}^2\alpha+c_{\ell-2}$ (top),  $\beta = -G^2\alpha +G$ (LHS), and $\beta = -\tfrac{1}{c_{\ell}^2}\alpha +\tfrac{1}{c_{\ell}}$ (RHS).

From~\eqref{derivatesarenegative} and lemmas~\ref{valuesfon''horizontallines''} and~\ref{valuesfon''verticallines''} it is clear that on $\Psi ( [c_{\ell},c_{\ell-2}]\times [g,c_{\ell}])$) the function $f$ is maximal in the lower right-hand vertex $\left( \tfrac{c_{\ell}}{1+c_{\ell}^2},\tfrac{c_{\ell}}{1+c_{\ell}^2}\right)$, and that
$$
f \left( \tfrac{c_{\ell}}{1+c_{\ell}^2},\tfrac{c_{\ell}}{1+c_{\ell}^2}\right) = \frac{1-c_{\ell}^2}{1+c_{\ell}^2}.
$$
Note that for $\ell$, $h$ odd, $h>\ell\geq 1$, this part would be proved and ``don'' if on each of the quadrilaterals $\Psi ([c_{\ell},c_{\ell-2}]\times [c_h,c_{h-2}])$, the first coordinate $\tfrac{g}{1+c_{\ell-2}g}$ of $\Psi (c_{\ell-2},g)$ were larger than $\frac{1-c_{\ell}^2}{1+c_{\ell}^2}$. However, this is not the case. For example, for $\ell=1$, $h=3$ one has
$$
\frac{1-c_{\ell}^2}{1+c_{\ell}^2} = \frac{1-\tfrac{4}{9}}{1+\tfrac{4}{9}} = \tfrac{5}{13} = 0.384615384\cdots\, ,
$$
while
$$
\tfrac{g}{1+c_{\ell-2}g} = \tfrac{g}{1+g} = g^2 = 0.381966011\cdots < \tfrac{5}{13}.
$$
The first coordinate of the top-left-hand vertex of $\Psi ([c_{\ell},c_{\ell-2}]\times [c_h,c_{h-2}])$ in this case (i.e., $\ell =1$, $h=\ell+2=3$) is
$$
\frac{c_h}{1+c_hc_{\ell-2}} = \frac{\tfrac{5}{8}}{1+\tfrac{5}{8}} = \frac{1-c_{\ell}^2}{1+c_{\ell}^2},
$$
so for $\ell = 1 = h-2$, for all $(\alpha,\beta)\in\Psi ([c_{\ell},c_{\ell-2}]\times [c_h,c_{h-2}])$ we have $\alpha\geq \tfrac{5}{13},\,\, \beta \gg \tfrac{5}{13},\,\, \text{and }\, f(\alpha,\beta) \leq \tfrac{5}{13}$, and therefore
$$
\min \{ \alpha,\beta,f(\alpha,\beta)\} \leq B_{1,3} = \tfrac{5}{13} < \tfrac{1}{\sqrt{5}}.
$$
In order to find this result for general $\ell$, $h$ \emph{odd}, with $h>\ell$, we adapt the idea which leads to the point $(\alpha^*,\beta^*)$ from~\eqref{alpha*beta*}.\medskip\

Let $\alpha^{\sharp}\in \left[ 0,\tfrac{1}{\sqrt{5}}\right]$ satisfy the equation
\begin{equation}\label{alphaSharp}
\alpha = f(\alpha ,\beta ),\quad \text{where $\beta = -c_{\ell}^2\alpha + c_{\ell}$}.
\end{equation}
Since $f(\alpha,-c_{\ell}^2\alpha+c_{\ell}) = (1-c_{\ell})^2\alpha + 1-c_{\ell}$, we see that finding a solution to~\eqref{alphaSharp} is equivalent to solving
$$
\alpha = (1-c_{\ell})^2\alpha + 1-c_{\ell},
$$
and we find that 
\begin{equation}\label{alphaSharp2}
\alpha^{\sharp} = \frac{1-c_{\ell}}{c_{\ell}(2-c_{\ell})}.
\end{equation}
For $\alpha < \alpha^{\sharp}$ and $\beta_{\alpha} = -c_{\ell}^2\alpha+c_{\ell}$, we have $\alpha < f(\alpha,\beta_{\alpha})$, while for $\alpha > \alpha^{\sharp}$ and $\beta_{\alpha} = -c_{\ell}^2\alpha+c_{\ell}$, we have $\alpha > f(\alpha,\beta_{\alpha})$. A trivial but tedious calculation, using cross-multiplication and the ``inverse'' of~\eqref{cell2} (i.e., $c_{\ell-2}=\tfrac{2c_{\ell}-1}{1-c_{\ell}}$), yields
\begin{equation}\label{alphasharp}
\frac{1-c_{\ell}}{c_{\ell}(2-c_{\ell})} < \frac{g}{1+gc_{\ell-2}}.
\end{equation}
Recall that $ \frac{g}{1+gc_{\ell-2}}$ is the first coordinate of the top left vertex of $\Psi ( [c_{\ell},c_{\ell-2}]\times [g,c_{\ell}])$, and therefore the smallest value of $\alpha$ in $\Psi ( [c_{\ell},c_{\ell-2}]\times [g,c_{\ell}])$. We have the following result.

\begin{proposition}\label{prop:ellhoddh>ell}
For each $\ell$, $h$ odd, $h>\ell$, and for $(\alpha,\beta)\in \Psi ([c_{\ell},c_{\ell-2}]\times [c_h,c_{h-2}])$,
$$
\min \left\{ \alpha,\beta,f(\alpha,\beta)\right\} \leq B_{\ell,h} = f(\Psi (c_{\ell},c_{h-2})) = \frac{(1-c_{\ell})(1+c_{h-2})}{1+c_{\ell}c_{h-2}}\leq \frac{1-c_{\ell}^2}{1+c_{\ell}^2} \uparrow \tfrac{1}{\sqrt{5}},\quad \text{whenever $\ell\to\infty$}.
$$
Equivalently, for each $\ell$, $h$ odd, $h>\ell$, and for $(t_n,v_n)\in [c_{\ell},c_{\ell-2}]\times [c_h,c_{h-2}]$,
$$
\min \left\{ \Theta_{n-1},\Theta_n, \Theta_{n+1}\right\} \leq B_{\ell,h} = f(\Psi (c_{\ell},c_{h-2})) = \frac{(1-c_{\ell})(1+c_{h-2})}{1+c_{\ell}c_{h-2}}\leq \frac{1-c_{\ell}^2}{1+c_{\ell}^2} \uparrow \tfrac{1}{\sqrt{5}},\quad \text{whenever $\ell\to\infty$}.
$$
These constants are best possible. I.e., they cannot be replaced by smaller ones.
\end{proposition}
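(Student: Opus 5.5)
The plan is to bound $\min\{\alpha,\beta,f(\alpha,\beta)\}$ by $f$ alone, and then to show optimality at the vertex where $f$ is maximal. Put $Q=\Psi([c_{\ell},c_{\ell-2}]\times[c_h,c_{h-2}])$. Since $h>\ell$ are both odd, we have $g<c_h<c_{h-2}\le c_{\ell}<c_{\ell-2}$, so $Q$ lies above the diagonal $\beta=\alpha$. Its bottom side lies on $\beta=-c_{\ell}^2\alpha+c_{\ell}$ (the image of $t=c_{\ell}$). Its right side is the image of $v=c_{h-2}$, i.e. it lies on $\beta=-\tfrac{1}{c_{h-2}^2}\alpha+\tfrac{1}{c_{h-2}}$. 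By~\eqref{derivatesarenegative}, together with Lemmas~\ref{valuesfon''horizontallines''} and~\ref{valuesfon''verticallines''} (which make $f$ increasing along both of these sides), $f$ attains its maximum on $Q$ at the lower-right vertex $\Psi(D)=\Psi(c_{\ell},c_{h-2})$. This is exactly the argument already used for $\Psi([c_{\ell},c_{\ell-2}]\times[g,c_{\ell}])$. Hence $\min\{\alpha,\beta,f(\alpha,\beta)\}\le f(\alpha,\beta)\le f(\Psi(D))$ on $Q$, which is the upper bound.

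Next we compute $B_{\ell,h}$. The point $\Psi(D)$ has first coordinate $\alpha_D=\tfrac{c_{h-2}}{1+c_{\ell}c_{h-2}}$ and lies on the bottom line. Lemma~\ref{valuesfon''horizontallines''} with $c=c_{\ell}$ then gives
$$
f(\Psi(D))=(1-c_{\ell})^2\alpha_D+1-c_{\ell}=\frac{(1-c_{\ell})\bigl((1-c_{\ell})c_{h-2}+1+c_{\ell}c_{h-2}\bigr)}{1+c_{\ell}c_{h-2}}=\frac{(1-c_{\ell})(1+c_{h-2})}{1+c_{\ell}c_{h-2}} .
$$
For the comparison with $\tfrac{1-c_{\ell}^2}{1+c_{\ell}^2}$, note that along the bottom line $f$ is increasing in $\alpha$. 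Moreover $v\mapsto \tfrac{v}{1+c_{\ell}v}$ is increasing, and $c_{h-2}\le c_{\ell}$, with equality exactly when $h=\ell+2$. So $f(\Psi(D))\le f(\Psi(c_{\ell},c_{\ell}))=\tfrac{1-c_{\ell}^2}{1+c_{\ell}^2}$. The map $c\mapsto\tfrac{1-c^2}{1+c^2}$ is decreasing, and $c_{\ell}\downarrow g$ for odd $\ell$. Therefore this quantity increases to $\tfrac{1-g^2}{1+g^2}=\tfrac{g}{1+g^2}=\tfrac{1}{\sqrt5}$.

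Finally, optimality, following Remark~\ref{RemarkOnBestValue}: I must show that at $\Psi(D)$ both $\alpha$ and $\beta$ exceed $f(\Psi(D))$. Because $Q$ lies above the diagonal, $\beta_D>\alpha_D$, so it suffices to show $\alpha_D>f(\Psi(D))$. The point $\Psi(D)$ is on the line $\beta=-c_{\ell}^2\alpha+c_{\ell}$, and on that line $\alpha>f$ holds exactly when $\alpha>\alpha^{\sharp}$ by~\eqref{alphaSharp2}. By~\eqref{alphasharp}, $\alpha^{\sharp}<\tfrac{g}{1+gc_{\ell-2}}$, which is the smallest first coordinate in $\Psi([c_{\ell},c_{\ell-2}]\times[g,c_{\ell}])\supseteq Q$; hence $\alpha_D>\alpha^{\sharp}$. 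So near $\Psi(D)$ the minimum equals $f$, and it takes values arbitrarily close to $B_{\ell,h}$. By continuity and Jager's genericity result (Remark~\ref{RemarkOnJager}), these values are actually approached by $(\Theta_{n-1},\Theta_n)$, so no smaller constant works.

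The main obstacle is the inequality~\eqref{alphasharp}, which the paper states as a ``trivial but tedious'' computation. If it needed verification, I would substitute $c_{\ell-2}=\tfrac{2c_{\ell}-1}{1-c_{\ell}}$, cross-multiply, and reduce the claim to a polynomial inequality in $c_{\ell}$ on $(g,\tfrac23]$, checking its sign there.
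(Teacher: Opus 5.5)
Your proposal is correct and follows essentially the paper's own proof. Both use the maximum of $f$ on $Q$ at $\Psi(c_{\ell},c_{h-2})$, evaluated with Lemma~\ref{valuesfon''horizontallines''}, and the chain $f(\alpha,\beta)\le f(\alpha,\beta_{\alpha})<\alpha\le\beta$ obtained from $\alpha>\alpha^{\sharp}$ and~\eqref{alphasharp}; you rightly note that this chain is needed for sharpness, not for the upper bound itself.

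Three minor touch-ups:
\begin{enumerate}
\item[(1)] The decreasing partials~\eqref{derivatesarenegative} push the maximum of $f$ to the lower-left boundary of $Q$, which consists of the bottom side and the \emph{left} side $v=c_h$, not the right side. More directly, $f(\Psi(t,v))=\frac{(1-t)(1+v)}{1+tv}$ is decreasing in $t$ and increasing in $v$.
\item[(2)] At $\Psi(D)$ you only have $\beta_D\ge\alpha_D$, with equality when $h=\ell+2$, but this suffices.
\item[(3)] Inequality~\eqref{alphasharp} needs no computation: Lemma~\ref{lem:alphasharp1} gives $\alpha^{\sharp}=\frac{c_{\ell+1}}{1+c_{\ell+1}c_{\ell-2}}<\frac{g}{1+gc_{\ell-2}}$, because $c_{\ell+1}<g$ for $\ell+1$ even and $v\mapsto\frac{v}{1+c_{\ell-2}v}$ is increasing.
\end{enumerate}
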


\begin{proof}
Let $(\alpha,\beta)\in\Psi ([c_{\ell},c_{\ell-2}]\times [c_h,c_{h-2}])$, where $\ell$, $h$ are \emph{odd}, and $h>\ell$. Now
$$
\frac{(1-c_{\ell})(1+c_{h-2})}{1+c_{\ell}c_{h-2}}\leq \frac{1-c_{\ell}^2}{1+c_{\ell}^2} < \tfrac{1}{\sqrt{5}}
$$
is an immediate consequence of the fact that if $h$, $\ell$ \emph{odd}, $\ell$ fixed and $h\to \infty$, we have $c_h\downarrow g$, so $(\Psi (c_{\ell},c_{h-2}))_{\text{$h>\ell$, $h$ odd}}$ is a sequence on the line $\beta = -c_{\ell}^2\alpha+c_{\ell}$ of which the first coordinates decrease, and with limit $\Psi (c_{\ell},g)$. So in each $\Psi ([c_{\ell},c_{\ell-2}]\times [c_h,c_{h-2}])$, $f$ takes its maximum in $\Psi (c_{\ell},c_{h-2})$ with value $\frac{(1-c_{\ell})(1+c_{h-2})}{1+c_{\ell}c_{h-2}}$, and due to Lemma~\ref{valuesfon''horizontallines''} these values are smaller than $f(\Psi (c_{\ell},c_{\ell}))=\tfrac{1-c_{\ell}^2}{1+c_{\ell}^2}\uparrow \tfrac{1}{\sqrt{5}}$, as $\ell\to\infty$.\medskip\

Next we show that for $(\alpha,\beta)\in\Psi ([c_{\ell},c_{\ell-2}]\times [c_h,c_{h-2}])$ we have
$$
\min \{ \alpha,\beta,f(\alpha,\beta)\} \leq f(\Psi (c_{\ell},c_{h-2})) = \frac{(1-c_{\ell})(1+c_{h-2})}{1+c_{\ell}c_{h-2}}.
$$
As $\Psi ([c_{\ell},c_{\ell-2}]\times [c_h,c_{h-2}])$ is ``above'' the diagonal $\beta =\alpha$ (it ``touches'' the diagonal in $\Psi (c_{\ell-2},c_{\ell-2})$ when $h-2=\ell$),  we immediately have $\alpha \leq \beta$, so the Borel constant $B_{\ell,h}$ for which $\min \{ \alpha,\beta,f(\alpha,\beta)\} \leq B_{\ell,h}$ cannot be determined by the value of $\beta$. In addition, this Borel constant $B_{\ell,h}$ cannot be determined by the value of $\alpha$. To see this, let $(\alpha,\beta)\in\Psi ([c_{\ell},c_{\ell-2}]\times [c_h,c_{h-2}])$ and set $\beta_{\alpha}=-c_{\ell}^2\alpha+c_{\ell}$. Then by~\eqref{alphasharp} we have $\alpha>\alpha^{\sharp}$, and~\eqref{derivatesarenegative} gives that
$$
f(\alpha,\beta)\leq f(\alpha,\beta_{\alpha})<\alpha \leq \beta .
$$
It follows that the constant $B_{\ell,h}$ for which $\min \{ \alpha,\beta,f(\alpha,\beta)\}\leq B_{\ell,h}$ for $(\alpha,\beta)\in\Psi ([c_{\ell},c_{\ell-2}]\times [c_h,c_{h-2}])$ is determined by the maximum value of $f(\alpha,\beta)$ in $\Psi ([c_{\ell},c_{\ell-2}]\times [c_h,c_{h-2}])$, which is $\tfrac{(1-c_{\ell})(1+c_{h-2})}{1+c_{\ell}c_{h-2}}$.
\end{proof}

\subsection{$(\ell,h)$ is (\emph{odd}, \emph{even})}\label{OddEven} Now we consider that part of $V_{\ell}^{\prime}$ not considered in the previous subsection; we have
$$
\Psi ([c_{\ell},c_{\ell-2}]\times [\tfrac{1}{2},g]) = \bigcup_{\text{$h\geq 2$ even}} \Psi ([c_{\ell},c_{\ell-2}]\times [c_{h-2},c_h]).
$$
Although $\alpha^{\sharp}$ played an important role in part ($iii$) of the previous subsection, Subsection~\ref{OddOdd}, its true ``significance'' will now be used. We first state (and prove) a lemma involving $\alpha^{\sharp}$, and then show why this lemma is instrumental in understanding this case. Recall from~\eqref{alphaSharp2} that $\alpha^{\sharp} = \tfrac{1-c_{\ell}}{c_{\ell}(2-c_{\ell})}$.

\begin{lemma}\label{lem:alphasharp1} For $\ell\in\N$ we have
\begin{equation}\label{eq::alphasharp1}
\alpha^{\sharp} = \frac{c_{\ell-1}}{1+c_{\ell}c_{\ell-1}} = \frac{c_{\ell+1}}{1+c_{\ell+1}c_{\ell-2}}.
\end{equation}
\end{lemma}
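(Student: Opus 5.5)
The plan is to reduce everything to one first-order recurrence for the $c_{\ell}$ and then rewrite both right-hand sides of~\eqref{eq::alphasharp1} as rational functions of $c_{\ell}$ alone. Each will then be compared with $\alpha^{\sharp}=\frac{1-c_{\ell}}{c_{\ell}(2-c_{\ell})}$ from~\eqref{alphaSharp2}.

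First I will record the one-step recurrence. By~\eqref{cell} and~\eqref{fibonacconumbers}, $F_{j+2}=F_{j+1}+F_j$ holds for all $j\geq -1$. Hence, for every $j\geq 0$,
$$
c_j=\frac{F_{j+1}}{F_{j+1}+F_j}=\frac{1}{1+c_{j-1}},
$$
with $c_{-1}=F_0/F_1=1$ and $c_0=\tfrac12$. Applying this to $j=\ell$ and to $j=\ell+1$ gives
$$
c_{\ell-1}=\frac{1-c_{\ell}}{c_{\ell}},\qquad c_{\ell+1}=\frac{1}{1+c_{\ell}}.
$$
Iterating the recurrence once more, exactly as in~\eqref{cell2}, gives $c_{\ell-2}=\frac{2c_{\ell}-1}{1-c_{\ell}}$. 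This is the ``inverse'' of~\eqref{cell2} already used for~\eqref{alphasharp}, and it is valid for all $\ell\geq 1$, including $\ell=1$, where $c_{-1}=1$.

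For the first equality, I substitute $c_{\ell-1}=\frac{1-c_{\ell}}{c_{\ell}}$. The denominator becomes $1+c_{\ell}c_{\ell-1}=2-c_{\ell}$, so
$$
\frac{c_{\ell-1}}{1+c_{\ell}c_{\ell-1}}=\frac{1-c_{\ell}}{c_{\ell}(2-c_{\ell})}=\alpha^{\sharp}.
$$

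For the second equality, I substitute $c_{\ell+1}=\frac{1}{1+c_{\ell}}$ and $c_{\ell-2}=\frac{2c_{\ell}-1}{1-c_{\ell}}$, then multiply numerator and denominator by $(1+c_{\ell})(1-c_{\ell})$. This yields
$$
\frac{c_{\ell+1}}{1+c_{\ell+1}c_{\ell-2}}=\frac{1-c_{\ell}}{(1-c_{\ell}^2)+(2c_{\ell}-1)}=\frac{1-c_{\ell}}{c_{\ell}(2-c_{\ell})},
$$
which is again $\alpha^{\sharp}$.

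There is no real obstacle; the only points needing care are bookkeeping. One must check that the recurrence is valid at the small indices that occur, namely $\ell=1$, where $c_{\ell-2}=c_{-1}=1$ and $c_{\ell-1}=c_0$. One must also note that the denominators $c_{\ell}$, $1-c_{\ell}$ and $2-c_{\ell}$ are nonzero, since $\tfrac12\leq c_{\ell}<1$ for $\ell\geq 0$.
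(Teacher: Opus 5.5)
Your proof is correct and follows essentially the same route as the paper: both use the one-step recurrence $c_j=\frac{1}{1+c_{j-1}}$ together with its two-step inverse $c_{\ell-2}=\frac{2c_{\ell}-1}{1-c_{\ell}}$, and your treatment of the second equality is identical to the paper's. The only cosmetic difference is in the first equality. The paper rewrites both $\alpha^{\sharp}$ and $\frac{c_{\ell-1}}{1+c_{\ell}c_{\ell-1}}$ in terms of $c_{\ell-1}$, obtaining $\frac{c_{\ell-1}(1+c_{\ell-1})}{2c_{\ell-1}+1}$ for each, whereas you substitute $c_{\ell-1}=\frac{1-c_{\ell}}{c_{\ell}}$ and land on $\alpha^{\sharp}$ directly, which is slightly shorter.
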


\begin{proof}
From definition~\eqref{cell} of the numbers $c_{\ell}$ and the definition of the Fibonacci numbers $(F_n)_{n\geq -1}$, we immediately have $c_{\ell}=\frac{1}{1+c_{\ell-1}}$; viz.
$$
c_{\ell} = \frac{F_{\ell+1}}{F_{\ell+2}} = \frac{F_{\ell+1}}{F_{\ell}+F_{\ell+1}} = \frac{1}{1+\displaystyle \frac{F_{\ell}}{F_{\ell+1}}} = \frac{1}{1+c_{\ell-1}}.
$$
But then we have $c_{\ell+1}=\frac{1}{1+c_{\ell}}$, and we find
$$
\alpha^{\sharp} = \frac{1-c_{\ell}}{c_{\ell}(2-c_{\ell})} = \frac{1-\frac{1}{1+c_{\ell-1}}}{\frac{1}{1+c_{\ell-1}}(2-\frac{1}{1+c_{\ell-1}})}
= \frac{c_{\ell-1}(1+c_{\ell-1})}{2c_{\ell-1}+1},
$$
and that
$$
\frac{c_{\ell-1}}{1+c_{\ell}c_{\ell-1}} = \frac{c_{\ell-1}}{1+c_{\ell-1}\cdot \frac{1}{1+c_{\ell-1}}} = \frac{c_{\ell-1}(1+c_{\ell-1})}{2c_{\ell-1}+1}.
$$
Furthermore, as~\eqref{cell2} yields that $c_{\ell-2} = \tfrac{2c_{\ell}-1}{1-c_{\ell}}$,
$$
\frac{c_{\ell+1}}{1+c_{\ell+1}c_{\ell-2}} = \frac{\frac{1}{1+c_{\ell}}}{1+\tfrac{1}{1+c_{\ell}}\cdot \tfrac{2c_{\ell}-1}{1-c_{\ell}}} = \frac{1}{1+c_{\ell} + \frac{2c_{\ell}-1}{1-c_{\ell}}} = \frac{1-c_{\ell}}{1-c_{\ell}^2+2c_{\ell}-1} = \frac{1-c_{\ell}}{c_{\ell}(2-c_{\ell})} = \alpha^{\sharp}.
$$
From these three results, the lemma follows.
\end{proof}

Hence, for $(\ell,h)$ is (\textit{odd}, \textit{even}) we look at the lower right rectangle $[g,1]\times [\tfrac{1}{2},g]$ \textit{below} the diagonal in the domain $[\frac{1}{2},1]\times [\tfrac{1}{2},1]$. Since this lower right rectangle is below the diagonal, $\Psi$ maps this onto a quadrilateral above the diagonal. Thus, $\alpha<\beta$ in the image and hence for $(\alpha,\beta)$ in this quadrilateral we have $\min \{ \alpha,\beta,f(\alpha,\beta)\} = \min \{ \alpha,f(\alpha,\beta)\}$.\smallskip

Let $(\alpha,\beta)\in\Psi ([c_{\ell},c_{\ell-2}]\times [c_h,c_{h-2}])$, where $\ell$, $h$ are \emph{odd} and \emph{even}, respectively. For fixed $\ell$ \emph{odd}, a case distinction will made for the cases $h\geq\ell+3$, $h\leq \ell-1$, and $h=\ell+1$. 
\newline
\newline
For $h=\ell+1$, the map $\Psi$ sends the points $A:\, (c_\ell,c_{\ell-1})$, $B:\, (c_{\ell-2},c_{\ell-1})$, $C:\, (c_{\ell-2},c_{\ell+1})$ and $D:\, (c_\ell, c_{\ell+1})$, to, respectively
$$
\Psi(A)=\left( \frac{c_{\ell-1}}{1+c_{\ell}c_{\ell-1}},\frac{c_{\ell}}{1+c_{\ell}c_{\ell-1}}\right),\qquad \Psi(B)=\left( \frac{c_{\ell-1}}{1+c_{\ell-2}c_{\ell-1}},\frac{c_{\ell-2}}{1+c_{\ell-2}c_{\ell-1}}\right),\, 
$$
and
$$
\Psi(C)=\left( \frac{c_{\ell+1}}{1+c_{\ell-2}c_{\ell+1}},\frac{c_{\ell-2}}{1+c_{\ell-2}c_{\ell+1}}\right),\qquad\Psi(D)=\left( \frac{c_{\ell+1}}{1+c_{\ell}c_{\ell+1}},\frac{c_{\ell}}{1+c_{\ell}c_{\ell+1}}\right).
$$
These are the bottom-left, top-left, top-right and bottom-right vertices, resp., in the image which is a quadrilateral.
From Lemma~\ref{lem:alphasharp1} it follows that the first coordinate of the top-right vertex $\Psi (C)$ and the first coordinate of the bottom-left vertex $\Psi (A)$ are equal to $\alpha^\sharp$. This means that the vertical line $\alpha = \alpha^\sharp$ divides the quadrilateral into two parts (actually two triangles). We first investigate the left part/left triangle. For $(\alpha,\beta)$ in this part it holds that $\alpha \le \alpha^\sharp$. As $\alpha^{\sharp}$ is the solution of $\alpha = f(\alpha, \beta_{\alpha})$, and $\Psi (A)$ is the bottom-right point of this left triangle, and because the first coordinate of $\Psi (A)$ is $\alpha^{\sharp}$, which is equal to $f(\Psi (A))$, it follows from~Lemma~\ref{valuesfon''verticallines''}  and~\eqref{derivatesarenegative} and that on this left part always $f(\alpha,\beta)\leq f(\Psi (A))=\alpha^{\sharp}$. Thus, the minimum of $\alpha$, $\beta$, and $f(\alpha,\beta)$ is for the points $(\alpha,\beta)$ in the left triangle always smaller than or equal to $\alpha^{\sharp}$.\smallskip\

\noindent 
For $(\alpha,\beta)$ in the right-hand part it holds that $\alpha \ge \alpha^\sharp$. For each $\alpha$ it holds that $f(\alpha,\beta_\alpha) \leq \alpha$. Since $f(\alpha,\beta) \leq f(\alpha,\beta_\alpha)\leq f(\Psi(D))=f(c_{\ell},c_{\ell+1})$, it follows that for $(\alpha,\beta)$ in this right-hand part
$$
\min\left\{\alpha,\beta,f(\alpha,\beta)\right\} \le \max \left\{ f(\alpha,\beta)\right\} = B_{\ell,h} := f(\Psi(D)) = f(c_{\ell},c_{\ell+1}).
$$
As the first coordinate of $\Psi (A)$ is $\alpha^{\sharp}$, and $\alpha^{\sharp}=f(\Psi(A))$, the results can be combined for the two parts as follows.
\begin{proposition}\label{prop:ellhoddh=ell+1}
For each $\ell$ odd, $h=\ell+1$, and for $(\alpha,\beta)\in \Psi ([c_{\ell},c_{\ell-2}]\times [c_{\ell-1},c_{\ell+1}])$, 
$$
\min \left\{ \alpha,\beta,f(\alpha,\beta)\right\} \leq B_{\ell,\ell+1} = f(\Psi (c_{\ell},c_{\ell+1})) = \frac{(1-c_\ell)(1+c_{\ell+1})}{c_{\ell} c_{\ell+1}+1} < \tfrac{1}{\sqrt{5}}.
$$
Equivalently, for each $\ell$ odd, $h=\ell+1$, and for $(t_n,v_n)\in [c_{\ell},c_{\ell-2}]\times [c_{\ell-1},c_{\ell+1}]$, 
$$
\min \left\{ \Theta_{n-1},\Theta_n, \Theta_{n+1}\right\} \leq B_{\ell,\ell+1} = f(\Psi (c_{\ell},c_{\ell+1})) = \frac{(1-c_\ell)(1+c_{\ell+1})}{c_{\ell} c_{\ell+1}+1}< \tfrac{1}{\sqrt{5}}.
$$
These constants are best possible. I.e., they cannot be replaced by smaller ones.
\end{proposition}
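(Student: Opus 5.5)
The plan is to formalize the two-part splitting argument sketched just before the statement, and then to check the three claims the proposition makes: the formula for $B_{\ell,\ell+1}$, the strict inequality $B_{\ell,\ell+1}<\tfrac{1}{\sqrt5}$, and optimality. Throughout, $\ell$ is odd and $h=\ell+1$ is even. Then $W_\ell\times W_h=[c_\ell,c_{\ell-2})\times(c_{\ell-1},c_{\ell+1}]$ lies below the diagonal $v=t$, since $c_{\ell+1}<g<c_\ell$. Consequently $\alpha\le\beta$ on its image, and it suffices to bound $\min\{\alpha,f(\alpha,\beta)\}$.

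\emph{The formula for $B_{\ell,\ell+1}$.} I would first compute
$$f(\Psi(c_\ell,c_{\ell+1}))=\frac{(1-c_\ell)(1+c_{\ell+1})}{1+c_\ell c_{\ell+1}}$$
directly from Lemma~\ref{valuesfon''horizontallines''}. The point $\Psi(D)=\Psi(c_\ell,c_{\ell+1})$ lies on $\beta=-c_\ell^2\alpha+c_\ell$ with $\alpha=\tfrac{c_{\ell+1}}{1+c_\ell c_{\ell+1}}$. Substituting this $\alpha$ into $(1-c_\ell)^2\alpha+1-c_\ell$ and simplifying gives the stated expression.

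\emph{The splitting argument.} By Lemma~\ref{lem:alphasharp1}, the first coordinates of $\Psi(A)$ and $\Psi(C)$ both equal $\alpha^\sharp$, so the line $\alpha=\alpha^\sharp$ cuts the quadrilateral into a left and a right triangle.
\begin{itemize}
\item On the left triangle, $\alpha\le\alpha^\sharp$, so $\min\{\alpha,\beta,f(\alpha,\beta)\}\le\alpha^\sharp$.
\item On the right triangle, $\alpha\ge\alpha^\sharp$. Using \eqref{derivatesarenegative} with $\beta\ge\beta_\alpha=-c_\ell^2\alpha+c_\ell$, we get $f(\alpha,\beta)\le f(\alpha,\beta_\alpha)$. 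Lemma~\ref{valuesfon''horizontallines''} then makes this increasing in $\alpha$, with maximum at the right endpoint $\Psi(D)$. So $\min\{\alpha,\beta,f(\alpha,\beta)\}\le f(\alpha,\beta)\le f(\Psi(D))$.
\end{itemize}
To combine the two halves, note that $\alpha^\sharp=f(\Psi(A))$ and that $\Psi(A)$ lies on the same bottom line to the left of $\Psi(D)$. Hence $\alpha^\sharp=f(\Psi(A))\le f(\Psi(D))$ by monotonicity, and the overall bound is $B_{\ell,\ell+1}=f(\Psi(D))$.

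\emph{The strict inequality $B_{\ell,\ell+1}<\tfrac{1}{\sqrt5}$.} I would argue as follows. The point $\Psi(c_\ell,g)$ lies on the same bottom line, to the right of $\Psi(D)$, because $c_{\ell+1}<g$ and the first coordinate $v/(1+c_\ell v)$ is increasing in $v$. Therefore
$$f(\Psi(D))<f(\Psi(c_\ell,g))=\frac{(1-c_\ell)(1+g)}{1+c_\ell g}.$$
The right-hand side is decreasing in $c_\ell$, and $c_\ell>g$. At $c=g$ it equals $\tfrac{(1-g)(1+g)}{1+g^2}=\tfrac{g}{1+g^2}\cdot\tfrac{1-g^2}{g}$. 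Since $1-g^2=g$, this is exactly $\tfrac{g}{1+g^2}=\tfrac{1}{\sqrt5}$, which yields the strict inequality.

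\emph{Optimality (the main obstacle).} To see that the constant cannot be lowered, I would show that near $\Psi(D)$ all three quantities are at least $f(\Psi(D))$. For $f$ itself this holds with equality at $\Psi(D)$. For $\alpha$, it follows from $\alpha\ge\alpha^\sharp$ on the right triangle together with $f(\Psi(D))\le f(\alpha_D,\beta_{\alpha_D})<\alpha_D$, using that $f(\alpha,\beta_\alpha)<\alpha$ for $\alpha>\alpha^\sharp$. For $\beta$, it follows from $\beta\ge\alpha$. By continuity of $f$, every point of the quadrilateral sufficiently close to $\Psi(D)$ then has all three quantities arbitrarily close to or above $B_{\ell,\ell+1}$. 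Since such neighbourhoods have positive measure, Remark~\ref{RemarkOnBestValue} applies and the constant is best possible.
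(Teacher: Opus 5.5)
Your proposal is correct and follows the paper's own argument. You use the vertical cut at $\alpha=\alpha^\sharp$ through $\Psi(A)$ and $\Psi(C)$ (Lemma~\ref{lem:alphasharp1}), bound $f$ by $f(\Psi(D))$ on the right triangle via monotonicity along $\beta=-c_\ell^2\alpha+c_\ell$, and get the strict inequality by comparing with $\Psi(c_\ell,g)$ as in Remark~\ref{specialcaseRHS}. Your two small deviations are improvements: you bound the left triangle directly by $\alpha\le\alpha^\sharp$ instead of bounding $f$ there, and you state explicitly that $\alpha^\sharp=f(\Psi(A))\le f(\Psi(D))$, which is what lets the two halves combine.
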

Now consider the case $h\geq\ell+3$. On the line $\beta = -c_{\ell}^2\alpha+c_{\ell}$ it holds that $\alpha>\alpha^{\sharp}$. 
For such $\alpha$, it holds that $\alpha > f(\alpha,\beta_\alpha)$. For each $(\alpha,\beta)\in \Psi([c_{\ell},c_{\ell-2}]\times[c_{h-2},c_h])$ with $h\geq\ell+3$ it holds that $f(\alpha,\beta) \le f(\alpha,\beta_\alpha) < \alpha$. These $\alpha's,\beta_\alpha's$ do not have to be in the same quadrilateral ($\Psi([c_{\ell},c_{\ell-2}]\times[c_{h-2},c_h])$), but they have to be in the region which $\Psi$ maps the case 
$$
h\geq\ell+1
$$
onto. This gives the following\footnote{Obviously, if we would only change $h\geq \ell+3$ into $h\geq \ell+1$ in Proposition~\ref{prop:ellhoddheven>ell+3}, this new proposition would contain Proposition~\ref{prop:ellhoddh=ell+1} as a special case. We decided to keep these two cases ($h=\ell+1$ and $h\geq \ell+3$) separate, for greater clarity of exposition.} result.
\begin{proposition}\label{prop:ellhoddheven>ell+3}
For each $\ell$ odd, $h\geq \ell+3$ even, and for $(\alpha,\beta)\in \Psi ([c_{\ell},c_{\ell-2}]\times [c_{h-2},c_h])$, 
$$
\min \left\{ \alpha,\beta,f(\alpha,\beta)\right\} \leq B_{\ell,h} = f(\Psi (c_{\ell},c_h)) = \frac{(1-c_\ell)(1+c_h)}{1+c_\ell c_h} < \tfrac{1}{\sqrt{5}}.
$$
Equivalently, for each $\ell$ odd, $h=\ell+1$, and for $(t_n,v_n)\in [c_{\ell},c_{\ell-2}]\times [c_{\ell-1},c_{\ell+1}]$, 
$$
\min \left\{ \Theta_{n-1},\Theta_n, \Theta_{n+1}\right\} \leq B_{\ell,h} = f(\Psi (c_{\ell},c_h))= \frac{(1-c_\ell)(1+c_h)}{1+c_\ell c_h} < \tfrac{1}{\sqrt{5}}.
$$
These constants are best possible. I.e., they cannot be replaced by smaller ones.
\end{proposition}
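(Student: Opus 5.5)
The plan follows the pattern of Proposition~\ref{prop:ellhoddh=ell+1}. Fix $\ell$ odd and $h\geq \ell+3$ even, and set $Q=\Psi([c_{\ell},c_{\ell-2}]\times[c_{h-2},c_h])$. Since $h-2\geq \ell+1>\ell$ is even, we have $c_{h-2},c_h<g<c_\ell$. So the rectangle $[c_{\ell},c_{\ell-2}]\times[c_{h-2},c_h]$ lies strictly below the diagonal $v=t$, and $Q$ lies strictly above $\beta=\alpha$. Hence $\alpha<\beta$ on $Q$, and it suffices to bound $\min\{\alpha,f(\alpha,\beta)\}$.

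\textbf{Step 1: $\alpha>\alpha^{\sharp}$ on $Q$.} The smallest first coordinate on $Q$ occurs at the image of the corner $(c_{\ell-2},c_{h-2})$, namely $\tfrac{c_{h-2}}{1+c_{\ell-2}c_{h-2}}$. For fixed $t=c_{\ell-2}$, this quantity is increasing in $v$. For even indices, $c_{h-2}\geq c_{\ell+1}$, since the even-indexed $c$'s increase towards $g$ and $h-2\geq\ell+1$. Therefore
$$
\alpha\geq \frac{c_{\ell+1}}{1+c_{\ell-2}c_{\ell+1}}=\alpha^{\sharp}
$$
by Lemma~\ref{lem:alphasharp1}. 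Here $\alpha^{\sharp}$ is the first coordinate of the vertex $\Psi(C)$ from the $h=\ell+1$ case.

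\textbf{Step 2: $f<\alpha$ on $Q$.} For $(\alpha,\beta)\in Q$, let $\beta_\alpha=-c_\ell^2\alpha+c_\ell$ be the point on the bottom line of $V_\ell'$, so $\beta\geq\beta_\alpha$. By \eqref{derivatesarenegative}, $f(\alpha,\beta)\leq f(\alpha,\beta_\alpha)$. Lemma~\ref{valuesfon''horizontallines''} gives $f(\alpha,\beta_\alpha)=(1-c_\ell)^2\alpha+1-c_\ell$. Because $\alpha>\alpha^{\sharp}$ and $\alpha^{\sharp}$ is the unique fixed point of this linear map of slope less than $1$, we get $f(\alpha,\beta_\alpha)<\alpha$. (The point $(\alpha,\beta_\alpha)$ need not lie in $Q$, only in $\Psi(V_1)$ within the admissible range $\alpha\leq 1/(1+c_\ell)$, which holds.) Consequently $\min\{\alpha,\beta,f\}=f(\alpha,\beta)$ on $Q$.

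\textbf{Step 3: identifying the maximum of $f$.} Since $f$ decreases in both variables, its maximum on $Q$ is at the lower boundary. The lower boundary lies on $\beta=-c_\ell^2\alpha+c_\ell$, where $f$ increases in $\alpha$ (Lemma~\ref{valuesfon''horizontallines''}). So the maximum is attained at the right end, $\Psi(c_\ell,c_h)=\bigl(\tfrac{c_h}{1+c_\ell c_h},\tfrac{c_\ell}{1+c_\ell c_h}\bigr)$. To double-check, along the right edge (the image of the horizontal segment $v=c_h$) Lemma~\ref{valuesfon''verticallines''} shows $f$ is maximal at this same corner. Plugging in the formula of Lemma~\ref{valuesfon''horizontallines''} gives
$$
B_{\ell,h}=(1-c_\ell)^2\frac{c_h}{1+c_\ell c_h}+1-c_\ell=\frac{(1-c_\ell)(1+c_h)}{1+c_\ell c_h}.
$$

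\textbf{Step 4: $B_{\ell,h}<1/\sqrt5$ and optimality.} Since $c_h<g<c_\ell$, the point $\Psi(c_\ell,c_h)$ lies on the bottom line to the left of $\Psi(c_\ell,c_\ell)$. By monotonicity,
$$
B_{\ell,h}<f(\Psi(c_\ell,c_\ell))=\frac{1-c_\ell^2}{1+c_\ell^2}<\frac1{\sqrt5}.
$$
The last inequality holds because $\tfrac{1-c^2}{1+c^2}$ is decreasing in $c$, equals $1/\sqrt5$ at $c=g$, and $c_\ell>g$. Optimality follows because the bound is the value $f(\Psi(c_\ell,c_h))$, attained at a vertex of $Q$ where $\alpha,\beta>f$, together with Remark~\ref{RemarkOnBestValue}.

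The main obstacle is Step 1: checking that the corner realizing the minimal $\alpha$ is correctly identified and compared with $\alpha^{\sharp}$ through Lemma~\ref{lem:alphasharp1}, which requires getting the orientation of $c_{h-2}$ versus $c_{\ell+1}$ for even indices right.
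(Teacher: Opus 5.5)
Your proposal is correct and follows the paper's own argument: $\alpha<\beta$ on $Q$, $\alpha\ge\alpha^{\sharp}$ gives $f(\alpha,\beta)\le f(\alpha,\beta_\alpha)\le\alpha$ (with $(\alpha,\beta_\alpha)$ allowed to leave $Q$), so the minimum is $f$, maximal at $\Psi(c_\ell,c_h)$; you also actually prove $\alpha\ge\alpha^{\sharp}$ via the corner $\Psi(c_{\ell-2},c_{h-2})$ and Lemma~\ref{lem:alphasharp1}, which the paper only asserts. One small repair in Step~3: the lower-left boundary of $Q$ also contains the edge $v=c_{h-2}$, not only the bottom line, so instead conclude directly from Step~2 that $f(\alpha,\beta)\le(1-c_\ell)^2\alpha+1-c_\ell\le B_{\ell,h}$, because $\alpha\le c_h/(1+c_\ell c_h)$ on $Q$ (and note that Step~1 gives only $\alpha\ge\alpha^{\sharp}$, which is all you need).
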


\begin{remark}\label{specialcaseRHS}
Although the proofs of Propositions~\ref{prop:ellhoddh=ell+1} and~\ref{prop:ellhoddheven>ell+3} yields this, it is not immediately clear why for $\ell$ odd and $h\geq \ell+1$ even, one has
\begin{equation}\label{unusualRHS}
\frac{(1-c_\ell)(1+c_h)}{1+c_\ell c_h} < \tfrac{1}{\sqrt{5}}.
\end{equation}
In order to show that~\eqref{unusualRHS} holds, firstly define for $\ell$ odd and fixed (so the numbers $c_{\ell}$ and $1-c_{\ell}$ are constants) the function $h$ as
$$
h(x) = \frac{(1-c_{\ell})(1+x)}{1+c_{\ell}x},\quad \text{for $x\in [0,1]$}.
$$
Then 
$$
h^{\prime}(x) = \frac{(1-c_{\ell})^2}{(1+c_{\ell}x)^2} > 0,
$$
for $x\in [0,1]$. So $h$ is an increasing function, and as $c_h<g$ (since $h$ is \emph{even}), it can be seen that for each $\ell$ odd, fixed, and each $h\geq \ell+1$ even,
$$
\frac{(1-c_\ell)(1+c_h)}{1+c_\ell c_h} = h(c_h) < \lim_{\text{$h\to\infty$, $h$ odd}} h(c_h) = h(g) = \frac{(1-c_{\ell})(1+g)}{1+c_{\ell}g}.
$$
Now consider the function $k$ as (and recall that $g+1=G$),
$$
k(x) = \frac{G(1-x)}{1+gx},\quad  \text{for $x\in [0,1]$}.
$$
Then
$$
k^{\prime}(x) = G\cdot \frac{-(1+gx) - g(1-x)}{(1+gx)^2} < 0,
$$
for $x\in [0,1]$. So $k$ is monotonically decreasing, and as $c_{\ell}>g$ (since $\ell$ is \emph{odd}), and $c_{\ell}\downarrow g$, as $\ell\to\infty$, it holds that
$$
k(c_{\ell}) < \lim_{\text{$\ell\to\infty$, $\ell$ odd}} k(c_{\ell}) = \frac{1-g^2}{1+g^2}=\frac{1}{\sqrt{5}}.
$$
So,
$$
\frac{(1-c_\ell)(1+c_h)}{1+c_\ell c_h} = h(c_h) < h(g) = k(c_{\ell}) < k(g)=\frac{1}{\sqrt{5}}.
$$\hfill$\triangle$
\end{remark}

\noindent
Lastly, let $h\leq\ell-1$. Define the map $\pi_1:\R^2\mapsto\R$ as the projection map in the first coordinate. Now for $h\leq \ell-1$, $\ell$ odd and $h$ even, it holds that $\alpha<\alpha^{\sharp}$, hence $\alpha<f(\alpha,\beta_\alpha)$. So, as always $\beta>\alpha$ on $\Psi ([c_{\ell},c_{\ell-2}]\times [c_{h-1},c_h])$, it can be seen that the minimum $\min\left\{ \alpha,\beta,f(\alpha,\beta)\right\}$ is smaller than or equal to the largest $\alpha$ for points $(\alpha,\beta)\in \Psi ([c_{\ell},c_{\ell-2}]\times [c_{h-1},c_h])$. Thus the following result obtained.

\begin{proposition}\label{prop:ellhoddheven<ell-1}
For each $\ell$ odd, $h\leq \ell-1$ even, and for $(\alpha,\beta)\in \Psi ([c_{\ell},c_{\ell-2}]\times [c_{h-2},c_h])$, 
$$
\min \left\{ \alpha,\beta,f(\alpha,\beta)\right\} \leq B_{\ell,h} = \pi_1(\Psi (c_{\ell},c_h))  = \frac{c_h}{1+c_{\ell}c_h} < \tfrac{1}{\sqrt{5}}.
$$
Equivalently, for each $\ell$ odd, $h\leq\ell-1$ even, and for $(t_n,v_n)\in [c_{\ell},c_{\ell-2}]\times [c_{h-2},c_h]$, 
$$
\min \left\{ \Theta_{n-1},\Theta_n, \Theta_{n+1}\right\} \leq B_{\ell,h} = \pi_1(\Psi (c_{\ell},c_h)) = \frac{c_h}{1+c_{\ell}c_h} < \tfrac{1}{\sqrt{5}}.
$$
These constants are best possible. I.e., they cannot be replaced by smaller ones.
\end{proposition}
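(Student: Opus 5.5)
The inequality itself is almost immediate; the content lies in showing the bound is $<\tfrac{1}{\sqrt{5}}$ and that it is sharp. I would work throughout with $(t,v)=(t_n,v_n)\in[c_{\ell},c_{\ell-2}]\times[c_{h-2},c_h]$ and $(\alpha,\beta)=\Psi(t,v)=\bigl(\tfrac{v}{1+tv},\tfrac{t}{1+tv}\bigr)$.

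\emph{The inequality.} Since $\min\{\alpha,\beta,f(\alpha,\beta)\}\le\alpha$, it suffices to find the largest $\alpha$ on the quadrilateral. The function $\alpha=\tfrac{v}{1+tv}$ has $\partial_v\alpha=\tfrac{1}{(1+tv)^2}>0$ and $\partial_t\alpha=-\tfrac{v^2}{(1+tv)^2}<0$. Hence its maximum over the rectangle is attained at the corner $(t,v)=(c_{\ell},c_h)$, with value $\pi_1(\Psi(c_{\ell},c_h))=\tfrac{c_h}{1+c_{\ell}c_h}$. This already gives $\min\{\Theta_{n-1},\Theta_n,\Theta_{n+1}\}\le B_{\ell,h}$.

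\emph{Strictness against Borel.} For $\ell$ odd and $h$ even we have $c_h<g<c_{\ell}$, by Remark~\ref{remarkonWell}. The map $(x,y)\mapsto \tfrac{x}{1+yx}$ is increasing in $x$ and decreasing in $y$. Therefore
$$\tfrac{c_h}{1+c_{\ell}c_h}<\tfrac{g}{1+g^2}=\tfrac{1}{\sqrt{5}}.$$

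\emph{Optimality.} This is the only step needing care, and I expect it to be the main obstacle. I would show that at the vertex $P=\Psi(c_{\ell},c_h)$ both other quantities are at least $\alpha$, so that there $\min\{\alpha,\beta,f\}=\alpha=B_{\ell,h}$. Remark~\ref{RemarkOnBestValue} then shows that the bound cannot be lowered, since points near $P$ in a set of positive measure have minimum arbitrarily close to $B_{\ell,h}$ and are visited.

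\begin{itemize}
\item For $\beta$: the rectangle lies below the diagonal $v=t$, because $v\le c_h<g<c_{\ell}\le t$. Its image therefore lies above $\beta=\alpha$, which gives $\beta>\alpha$.
\item For $f$: the point $P$ lies on the line $\beta=-c_{\ell}^2\alpha+c_{\ell}$, by Remark~\ref{remarkonhorizontalandverticallines}. By Lemma~\ref{valuesfon''horizontallines''} and the definition~\eqref{alphaSharp} of $\alpha^{\sharp}$, we have $f(\alpha,\beta_\alpha)\ge\alpha$ exactly when $\alpha\le\alpha^{\sharp}$.
\item It remains to check $\pi_1(P)\le\alpha^{\sharp}$. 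Lemma~\ref{lem:alphasharp1} gives $\alpha^{\sharp}=\tfrac{c_{\ell-1}}{1+c_{\ell}c_{\ell-1}}$. Since $h\le\ell-1$ with both $h$ and $\ell-1$ even, Remark~\ref{remarkonWell} gives $c_h\le c_{\ell-1}$. Monotonicity of $x\mapsto\tfrac{x}{1+c_{\ell}x}$ then yields $\pi_1(P)\le\alpha^{\sharp}$.
\end{itemize}

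Consequently $f(P)\ge\pi_1(P)$, with equality only in the boundary case $h=\ell-1$, which is harmless for sharpness. By continuity, a small neighbourhood of $P$ inside the quadrilateral has minimum as close to $B_{\ell,h}$ as desired. This is the argument in the style of Remark~\ref{RemarkOnBestValue}.
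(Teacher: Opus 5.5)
Your proposal is correct and follows the paper's own argument: bound the minimum by $\alpha$, locate the largest $\alpha$ at the vertex $\Psi(c_\ell,c_h)$, and use $\beta>\alpha$ together with $\alpha\le\alpha^{\sharp}$ (so $f\ge\alpha$ on the line $\beta=-c_\ell^2\alpha+c_\ell$) to see that the bound is attained there and is therefore sharp. You also make explicit two points the paper leaves implicit, namely the comparison $c_h\le c_{\ell-1}$ via Lemma~\ref{lem:alphasharp1} and the verification that $\tfrac{c_h}{1+c_\ell c_h}<\tfrac{1}{\sqrt{5}}$.
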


\subsection{The case $(\ell,h)$ is (\emph{even}, \emph{odd})}\label{EvenOdd2}
Now consider $\ell$ and $h$ to be \textit{even} and \textit{odd}, respectively. Now $[c_{\ell-2},c_{\ell}]\times [c_h,c_{h-1}]$ is a subset of the top left rectangle $[\frac{1}{2},g]\times [g,1]$ in the domain $[\frac{1}{2},1]\times [\tfrac{1}{2},1]$. Since this last rectangle is above the diagonal, $\Psi$ maps this onto a quadrilateral below the diagonal. Thus, $\alpha>\beta$ for points $(\alpha,\beta)$ in this image, and we have $\min \left\{\alpha,\beta,f(\alpha,\beta)\right\} = \min\left\{ \beta,f(\alpha,\beta)\right\}$. Define
$$
A^* = \left(\tfrac{1}{2},g\right), \quad B^* = (g,g), \quad C^* = (g,1), \quad \text{and $D^* = \left(\tfrac{1}{2},1\right)$},
$$
then an easy calculation yields
$$
\Psi(A^*) = \left( \frac{2g}{2+g},\frac{1}{2+g}\right) = (2g^3,g^2),\quad \Psi(B^*) = \left( \frac{1}{\sqrt{5}},\frac{1}{\sqrt{5}}\right),\quad \Psi(C^*) = \left( g,g^2\right),\quad \text{and $\Psi(D^*) = \left( \tfrac{2}{3},\tfrac{1}{3} \right)$}.
$$
Thus, for all $(\alpha,\beta) \in\Psi{([\frac{1}{2},g]\times [g,1]})$ it is seen that $\alpha \geq \frac{1}{\sqrt{5}}$, and $\beta \leq \frac{1}{\sqrt{5}}$. 
Moreover, for these points $(\alpha,\beta)$ it follows from Lemmas~\ref{valuesfon''horizontallines''} and~\ref{valuesfon''verticallines''} that $f(\alpha,\beta) \geq f(\frac{1}{\sqrt{5}},\frac{1}{\sqrt{5}}) = \frac{1}{\sqrt{5}}$. Hence, for $(\alpha,\beta)\in \Psi([c_{\ell-2},c_{\ell}]\times [c_{h},c_{h-2}])\subsetneq \Psi ([\frac{1}{2},g]\times [g,1])$, we see that $\min\left\{ \alpha, \beta, f(\alpha,\beta)\right\}$ for such points $(\alpha,\beta)$ is always less than or equal to the largest value $\beta$ can achieve on $\Psi([c_{\ell-2},c_{\ell}]\times [c_{h},c_{h-2}])$, which is the second coordinate of $\Psi(B)$, where $B=(c_{\ell},c_h)$ (recall that $\ell$ is even). The following result is obtained.
\begin{proposition}\label{prop:ellevenhodd}
For each $\ell$ even, $h$ odd, and for $(\alpha,\beta)\in \Psi ([c_{\ell-2},c_{\ell}]\times [c_h,c_{h-2}])$, we have
$$
\min \left\{ \alpha,\beta,f(\alpha,\beta)\right\} \leq B_{\ell,h} = \frac{c_{\ell}}{1+c_{\ell}c_h} < \tfrac{1}{\sqrt{5}}.
$$
Equivalently, for each $\ell$ even, $h$ odd, and for $(t_n,v_n)\in [c_{\ell-2},c_{\ell}]\times [c_h,c_{h-2}]$, it holds that
$$
\min \left\{ \Theta_{n-1},\Theta_n,\Theta_{n+1}\right\} \leq B_{\ell,h} = \frac{c_{\ell}}{1+c_{\ell}c_h} < \tfrac{1}{\sqrt{5}}.
$$
These constants are best possible. I.e., they cannot be replaced by smaller ones.
\end{proposition}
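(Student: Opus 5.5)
The plan follows the argument sketched just before the statement, made precise on the quadrilateral $Q=\Psi([c_{\ell-2},c_{\ell}]\times[c_h,c_{h-2}])$. Since $\ell$ is even and $h$ is odd, we have $c_{\ell-2}<c_\ell<g<c_h<c_{h-2}$. So the rectangle $R=[c_{\ell-2},c_{\ell}]\times[c_h,c_{h-2}]$ lies in $[\tfrac12,g]\times[g,1]$, strictly above the diagonal $v=t$, and $Q$ lies strictly below $\beta=\alpha$. Hence $\alpha>\beta$ on $Q$. I will first show that $f(\alpha,\beta)\geq \beta$ on $Q$. Then the minimum is $\beta$ itself, and the bound is $\max_Q\beta$.

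For the lower bound on $f$, I will use the computed images $\Psi(A^*)=(2g^3,g^2)$, $\Psi(B^*)=(\tfrac1{\sqrt5},\tfrac1{\sqrt5})$, $\Psi(C^*)=(g,g^2)$ and $\Psi(D^*)=(\tfrac23,\tfrac13)$ of the big rectangle $[\tfrac12,g]\times[g,1]$. By Lemma~\ref{valuesfon''horizontallines''}, $f$ increases along each image line $\beta=-c^2\alpha+c$ (with $t=c$ fixed). By Lemma~\ref{valuesfon''verticallines''}, $f$ increases in $\alpha$ along each line $\beta=-\tfrac1{d^2}\alpha+\tfrac1d$ (with $v=d$ fixed). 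Moving from $\Psi(B^*)$, i.e.\ from $(t,v)=(g,g)$, to an arbitrary $(t,v)\in[\tfrac12,g]\times[g,1]$, decreasing $t$ raises $\alpha$ along the $v$-line, and increasing $v$ raises $\alpha$ along the $t$-line. This gives $f\geq f(\tfrac1{\sqrt5},\tfrac1{\sqrt5})=\tfrac1{\sqrt5}$ on the whole big quadrilateral. I must check that this monotonicity is in the right direction: a smaller $t$ moves the point rightwards along the horizontal-type line, where $f$ increases, and similarly for $v$. Since $\beta\leq\tfrac1{\sqrt5}$ there, we get $f\geq\beta$ on $Q$. This sign bookkeeping is the step I expect to need the most care. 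An alternative is to write $\Theta_{n+1}=f$ directly in $(t,v)$ coordinates, as $\Psi$ of the next point: $\Theta_{n+1}=T(t)/(1+T(t)\cdot 1/(1+v))$, which is explicitly monotone.

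Next I maximize $\beta=t/(1+tv)$ over $R$. It is increasing in $t$ because its $t$-derivative is $1/(1+tv)^2$, and decreasing in $v$. So the maximum is attained at $(t,v)=(c_\ell,c_h)$, giving $B_{\ell,h}=\tfrac{c_\ell}{1+c_\ell c_h}$. This value is less than $\tfrac{g}{1+g^2}=\tfrac1{\sqrt5}$ by the same monotonicity, since $c_\ell<g<c_h$.

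For optimality, in the sense of Remark~\ref{RemarkOnBestValue}, note that at the vertex $\Psi(c_\ell,c_h)$ we have $\alpha>\beta$ and $f\geq\beta$, so the minimum equals the bound there. Points of positive measure near this vertex have minimum arbitrarily close to the bound, and Jager's result then shows these points are visited.
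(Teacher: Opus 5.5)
Your proposal is correct and follows essentially the same route as the paper. The rectangle lies above the diagonal, so $\alpha>\beta$ on its image; the two monotonicity lemmas give $f\ge f(\tfrac{1}{\sqrt{5}},\tfrac{1}{\sqrt{5}})=\tfrac{1}{\sqrt{5}}\ge\beta$; and the bound is the largest value of $\beta$, attained at $\Psi(c_\ell,c_h)$.

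Your sign bookkeeping is right, and your suggested alternative is the cleanest way to confirm it: $f(\Psi(t,v))=\frac{(1-t)(1+v)}{1+tv}$ is explicitly decreasing in $t$ and increasing in $v$.
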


\subsection{The case $(\ell,h)$ is (\emph{even}, \emph{even})}\label{EvenEven}
Now for $\ell$ and $h$ both \textit{even}, rectangles of the form $[c_{\ell-2},c_{\ell}]\times [c_{h-2},c_h]$ inside the bottom left square in the domain $[\frac{1}{2},g]\times[\frac{1}{2},g]$ are being investigated. The vertices of the bottom left square square are
$$
A^*\mathrel{\mathop:} = \left( \tfrac{1}{2},\tfrac{1}{2}\right),\quad  
B^*\mathrel{\mathop:}= \left( g,\tfrac{1}{2}\right),\quad   
C^*\mathrel{\mathop:}=(g,g),\quad   
D^*\mathrel{\mathop:} = \left( \tfrac{1}{2},g\right)  .
$$
Now $\Psi$ maps this square to the quadrilateral with vertices:
$$
\Psi(A^*) = \left( \tfrac{2}{2}, \tfrac{2}{5}\right), \quad \Psi(B^*) =  \left( g^2,2g^3\right), \quad   
\Psi(C^*) = \left( \tfrac{1}{\sqrt{5}},\tfrac{1}{\sqrt{5}}\right), \quad \Psi(D^*) = \left(  2g^3,g^2\right). 
$$
Again, we divide this region into three parts; $\ell$ and $h$ such that the rectangles are on the diagonal $\alpha=\beta$, $\ell$ and $h$ such that the rectangles are above the diagonal, and $\ell$ and $h$ such that the rectangles are below the diagonal. 
\newline
\newline
Case ($i$): 
Starting with $\ell=h\geq 2$ \emph{even}, hence where the rectangles are on the diagonal. Now for $A$, $B$, $C$ and $D$ the vertices of the bottom-left, bottom-right, upper-right and upper-left, resp.\ corresponding to an arbitrary rectangle $[c_{\ell-2},c_{\ell}]\times [c_{\ell-2},c_{\ell}]$ on the diagonal, 
$$
A=(c_{\ell-2},c_{\ell-2}), \quad  
B=(c_{\ell},c_{\ell-2}), \quad  
C=(c_{\ell},c_{\ell}), \quad  
D=(c_{\ell-2},c_{\ell}).
$$
As the first coordinate (and therefore also the second coordinate) of $\Psi(C)$ is smaller than $\tfrac{1}{\sqrt{5}}$, it follows from~\eqref{derivatesarenegative} that $f(\Psi(C)) > f(\Psi(C^*))=\tfrac{1}{\sqrt{5}}$; see~\eqref{eveneven}.

The top-right vertex divides the quadrilateral into three parts. The upper right-vertex corresponds to $\Psi(C)= \Psi\left( c_{\ell},c_{\ell}\right) = \left( \frac{c_\ell}{1+c^2_\ell}, \frac{c_\ell}{1+c^2_\ell}\right)$. The horizontal line, $\beta = \frac{c_\ell}{1+c^2_\ell}$, and the vertical line, $\alpha = \frac{c_\ell}{1+c^2_\ell}$, corresponding to the coordinates of this vertex divide the area into three regions. One region is bounded by both the vertical and horizontal lines, the second region is bounded from below by the horizontal line and the last region is bounded on the left by the vertical line. For all $(\alpha,\beta)$ in the first region we have $\alpha, \beta\le\frac{c_\ell}{1+c^2_\ell}$. For all $(\alpha,\beta)$ on the second region we have $\alpha<\beta$ since this is above the diagonal, and due to the geometry of this part it follows that $\alpha\leq \frac{c_\ell}{1+c^2_\ell}$ and $\beta\ge\frac{c_\ell}{1+c^2_\ell}$. Hence $\min\left\{\alpha,\beta\right\}\le\frac{c_\ell}{1+c^2_\ell}$. Lastly for all $(\alpha,\beta)$ on the last region it holds that $\alpha\ge\frac{c_\ell}{1+c^2_\ell}$ and $\beta\le\frac{c_\ell}{1+c^2_\ell}$. Thus $\min\left\{\alpha,\beta\right\}\le\frac{c_\ell}{1+c^2_\ell}$. Note that $\frac{c_\ell}{1+c^2_\ell}<\tfrac{1}{\sqrt{5}}$.
\newline
\newline
This means that $\min\left\{\alpha,\beta\right\}\le \frac{c_\ell}{1+c^2_\ell}$. Now in order for $\min\left\{ \alpha,\beta\,f(\alpha,\beta) \right\}\le \frac{c_\ell}{1+c^2_\ell}$ to hold (of course this holds, but $\frac{c_\ell}{1+c^2_\ell}$ might be replaced by an even smaller constant), it is enough to show that $f(C) > \frac{c_\ell}{1+c^2_\ell} $. This is indeed the case since:
$$
f(\Psi(C))=f\Big( \frac{c_\ell}{1+c^2_\ell}, \frac{c_\ell}{1+c^2_\ell}\Big) = \sqrt{1-4\frac{c^2_\ell}{(1+c^2_\ell)^2}} \quad
=\sqrt{\frac{(c^2_\ell-1)^2}{(c^2_\ell+1)^2}} = \frac{1-c^2_\ell}{1+c^2_\ell}.
$$
For $\ell$ \emph{even}, we have that
\begin{equation}\label{eveneven}
\frac{1-c^2_\ell}{c^2_\ell+1} > \frac{1}{\sqrt{5}}.
\end{equation}
To see that~\eqref{eveneven} is holds, it suffices to consider the function $s:[0,1]\to\R$, defined by $s(x)=\frac{1-x}{1+x}$. Then $s^{\prime}(x)<0$ for all $x\in [0,1]$, and as $s(g^2)=\frac{1}{\sqrt{5}}$, from $c_{\ell} < g$ for $\ell$ \emph{even}, it immediately follows that
$$
\frac{1-c^2_\ell}{c^2_\ell+1} = s(c_{\ell}^2) > s(g^2) = \frac{1}{\sqrt{5}}.
$$

\begin{remark}{\rm
Note that the above statement is \emph{only} true if $\ell$ is even. For example, take $c_1=\tfrac{2}{3}$, then}
$$
\frac{1-c^2_\ell}{c^2_\ell+1} = \frac{1-\tfrac{4}{9}}{1+\tfrac{4}{9}} = \frac{5}{13} = 0.38\cdots \ll 0.447\cdots = \frac{1}{\sqrt{5}}.
$$
\hfill $\triangle$
\end{remark}
The following result holds.\smallskip\

\begin{proposition}\label{prop:ellhbothevenh=ell}
Let $\ell$ and $h$ both be even and $h=\ell$. Then for $(\alpha,\beta) \in \Psi([c_{\ell-2},c_\ell]\times[c_{\ell-2},c_{\ell}])$, we have
$$
\min\left\{\alpha,\beta,f(\alpha,\beta)\right\} \le B_{\ell,\ell} = \frac{c_{\ell}}{1+ c_{\ell}^2} < \frac{1}{\sqrt5}.
$$
Equivalently, for $\ell$ and $h$ both even, $h=\ell$, and for $(t_n,v_n) \in [c_{\ell-2},c_\ell]\times[c_{\ell-2},c_{\ell}]$, it holds that
$$
\min\left\{\Theta_{n-2},\Theta_n, \Theta_{n+1}\right\} \le B_{\ell,\ell} = \frac{c_{\ell}}{1+ c_{\ell}^2} < \frac{1}{\sqrt5}.
$$
These constants $B_{\ell,\ell}$ are best possible. I.e., they cannot be replaced by smaller ones.
\end{proposition}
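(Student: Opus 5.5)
The plan is to split the quadrilateral $Q_\ell:=\Psi([c_{\ell-2},c_\ell]\times[c_{\ell-2},c_\ell])$ by the horizontal line $\beta=\frac{c_\ell}{1+c_\ell^2}$ and the vertical line $\alpha=\frac{c_\ell}{1+c_\ell^2}$ through $\Psi(C)=\Psi(c_\ell,c_\ell)$, as already set up in Case ($i$). I will verify that $\Psi(C)$ is the vertex of $Q_\ell$ maximizing $\min\{\alpha,\beta\}$. The map $\Psi$ is decreasing in its second coordinate $\alpha$ with respect to $v$ and increasing in $t$, and conversely for $\beta$. On the symmetric square the diagonal image runs from $\Psi(A)=\bigl(\frac{c_{\ell-2}}{1+c_{\ell-2}^2},\frac{c_{\ell-2}}{1+c_{\ell-2}^2}\bigr)$ to $\Psi(C)$. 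Since $x\mapsto x/(1+x^2)$ is increasing on $[0,1]$ and $c_{\ell-2}<c_\ell$ for $\ell$ even, the diagonal point $\Psi(C)$ is the largest diagonal point. Off the diagonal, one coordinate is at most its value at the corresponding diagonal point: below the diagonal $\beta\le\alpha$, and the horizontal line through $\Psi(C)$ bounds $\beta$ from above. I would justify this by noting that $\beta=t/(1+tv)$ is maximized on the square at $t=c_\ell$, $v=c_{\ell-2}$. Hence I expect a small geometric check that $\min\{\alpha,\beta\}\le \frac{c_\ell}{1+c_\ell^2}$ throughout $Q_\ell$.

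For that check, take $(t,v)$ in the square with, say, $t\le v$, so that $\beta\le\alpha$ and $\min\{\alpha,\beta\}=\beta=t/(1+tv)$. This gives $t/(1+tv)\le t/(1+t^2)\le c_\ell/(1+c_\ell^2)$, using $v\ge t$ and the monotonicity of $x/(1+x^2)$. The case $t\ge v$ is symmetric, with $\alpha=v/(1+tv)$. This establishes $\min\{\alpha,\beta,f(\alpha,\beta)\}\le B_{\ell,\ell}$ without even using $f$, which is cleaner than the three-region picture.

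Next I would record $B_{\ell,\ell}<1/\sqrt5$: since $c_\ell<g$ and $x/(1+x^2)$ is increasing, we get $\frac{c_\ell}{1+c_\ell^2}<\frac{g}{1+g^2}=\frac1{\sqrt5}$.

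For optimality (Remark~\ref{RemarkOnBestValue}), I will show that near $\Psi(C)$ all three quantities exceed any constant below $B_{\ell,\ell}$. At $\Psi(C)$ the computation already in the text gives $f(\Psi(C))=\frac{1-c_\ell^2}{1+c_\ell^2}>\frac1{\sqrt5}>B_{\ell,\ell}$ by \eqref{eveneven}. By continuity of $f$, on a small neighbourhood of $\Psi(C)$ in $Q_\ell$ (a set of positive measure) we have $f>B_{\ell,\ell}$, while $\alpha,\beta$ are arbitrarily close to $B_{\ell,\ell}$. By Jager's result these points are visited, so no smaller constant works. The main care point is the endpoint convention: $W_\ell=(c_{\ell-2},c_\ell]$ includes $c_\ell$, and $(c_\ell,c_\ell)$ is attained only in the limit by generic orbits, so the bound is sharp as a supremum.
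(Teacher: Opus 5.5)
Your proposal is correct and follows the paper's overall strategy. You bound $\min\{\alpha,\beta\}$ alone by the common coordinate $\frac{c_\ell}{1+c_\ell^2}$ of $\Psi(c_\ell,c_\ell)$, without using $f$. You then get sharpness from $f(\Psi(c_\ell,c_\ell))=\frac{1-c_\ell^2}{1+c_\ell^2}>\frac{1}{\sqrt5}$.

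The difference is in how you prove the key inequality.
\begin{itemize}
\item The paper cuts the quadrilateral by the horizontal and vertical lines through $\Psi(c_\ell,c_\ell)$ and treats three regions. It asserts ``due to the geometry'' that points above the horizontal line satisfy $\alpha\le B_{\ell,\ell}$. Implicitly this is the claim that no point of the quadrilateral has both coordinates exceeding $B_{\ell,\ell}$.
\item Your computation in $(t,v)$-coordinates proves exactly that claim in two lines. For $t\le v$ you get $\min\{\alpha,\beta\}=\beta=\frac{t}{1+tv}\le\frac{t}{1+t^2}\le\frac{c_\ell}{1+c_\ell^2}$, and the case $t\ge v$ follows by symmetry.
\end{itemize}
So your route is shorter and more rigorous, while the paper's picture gives the geometric intuition. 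Your continuity argument near $\Psi(c_\ell,c_\ell)$, combined with Jager's result, also makes explicit the optimality that the paper only sketches in its remark on best possible constants.

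You should delete two sentences of your first paragraph, since they are wrong and your second paragraph supersedes them anyway.
\begin{itemize}
\item You state the monotonicity of $\Psi$ backwards. The quantity $\alpha=\frac{v}{1+tv}$ is the \emph{first} coordinate; it is increasing in $v$ and decreasing in $t$, and $\beta$ behaves the other way round.
\item The maximum of $\beta$ over the square cannot justify your bound. It is attained at $(t,v)=(c_\ell,c_{\ell-2})$ with value $\frac{c_\ell}{1+c_\ell c_{\ell-2}}>B_{\ell,\ell}$, and its image lies \emph{above} the diagonal. So it cannot show that the horizontal line through $\Psi(c_\ell,c_\ell)$ bounds $\beta$ on the part below the diagonal.
\end{itemize}
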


\noindent
Case ($ii$): Now consider the case $h\ge \ell+2$, so where the rectangle $[c_{\ell-2},c_{\ell}]\times [c_{h-2},c_h]$ is above the diagonal, and $\Psi$ maps this rectangle below the diagonal in the image. Due to this, for all $(\alpha,\beta)\in \Psi ([c_{\ell-2},c_{\ell}]\times [c_{h-2},c_h])$ we have $\beta<\alpha$, and also (and this is due to Lemma~\ref{valuesfon''horizontallines''}, Lemma~\ref{valuesfon''verticallines''} and because of what was seen in part ($i$) that $f(\Psi (c_{\ell},c_{\ell})>\tfrac{1}{\sqrt{5}}$), that $f(\alpha,\beta) > \tfrac{1}{\sqrt{5}}$. As the second coordinate of $\Psi (c_{\ell},c_{h-2})= \left( \frac{c_{h-2}}{1+c_{h-2}c_{\ell}}, \frac{c_{\ell}}{1+c_{h-2}c_{\ell}}\right)$ is the largest $\beta$-value on $\Psi ([c_{\ell-2},c_{\ell}]\times [c_{h-2},c_h])$ (this is because $\Psi ([c_{\ell-2},c_{\ell}]\times [c_{h-2},c_h])$ is bounded by straight lines with negative slopes), it follows for $(\alpha,\beta)\in \Psi([c_{\ell-2},c_{\ell}]\times [c_{h-2},c_h])$ that
$$
\min\left\{ \alpha,\beta,f(\alpha,\beta)\right\}\leq B_{\ell,h} = \frac{c_{\ell}}{1+c_{h-2}c_{\ell}} = \pi_2(\Psi(B)).
$$
where $B=(c_{\ell},c_{h-2})$. Thus we obtained the following (partial) result\smallskip

\begin{proposition}\label{prop:ellhbothevenh>ell}
Let $\ell$ and $h$ both be even, and $h\geq\ell+2$. Then for $(\alpha,\beta) \in \Psi([c_{\ell-2},c_\ell]\times[c_{h-2},c_h])$, it holds that
$$
\min\left\{\alpha,\beta,f(\alpha,\beta)\right\} \le B_{\ell,h} = \frac{c_{\ell}}{1+c_{h-2}c_{\ell}} < \frac{1}{\sqrt5}.
$$
Equivalently, for $\ell$ and $h$ both even, $h\geq \ell+2$, and for $(t_n,v_n) \in [c_{\ell-2},c_\ell]\times[c_{h-2},c_h]$, it holds that
$$
\min\left\{\Theta_{n-2},\Theta_n, \Theta_{n+1}\right\} \le B_{\ell,h} = \frac{c_{\ell}}{1+c_{h-2}c_{\ell}} < \frac{1}{\sqrt5}.
$$
These constants $B_{\ell,h}$ are best possible. I.e., they cannot be replaced by smaller ones.
\end{proposition}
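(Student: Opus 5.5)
We prove Proposition~\ref{prop:ellhbothevenh>ell} by showing that on the quadrilateral $Q=\Psi([c_{\ell-2},c_\ell]\times[c_{h-2},c_h])$ the minimum is always attained by $\beta$. The bound is then simply the maximal $\beta$-value on $Q$.

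\emph{Locating $Q$.} Since $\ell,h$ are even with $h\geq \ell+2$, we have $c_{\ell}\leq c_{h-2}$. The rectangle therefore lies on or above the diagonal $v=t$ in $\Omega$, so $Q$ lies on or below $\beta=\alpha$, which gives $\beta\leq\alpha$ on $Q$. Moreover $Q\subset\Psi([\tfrac12,g]\times[\tfrac12,g])$, and every point of this square satisfies $t\leq g$ and $v\leq g$.

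\emph{$f$ is large on $Q$.} By~\eqref{PsiMap}, lowering $t$ or $v$ moves $\Psi(t,v)$ towards smaller coordinates in the relevant directions. Concretely, $Q$ lies below the line $\beta=-c_\ell^2\alpha+c_\ell$ and to the left of the image of $v=c_h$. By~\eqref{derivatesarenegative}, $f$ decreases in both $\alpha$ and $\beta$, so on $Q$ its minimum is attained at the vertex with largest coordinates in this sense. That vertex is $\Psi(c_\ell,c_h)$, and it is dominated by $\Psi(c_\ell,c_\ell)$ in the partial order used in Lemmas~\ref{valuesfon''horizontallines''} and~\ref{valuesfon''verticallines''}. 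Using these lemmas along the boundary lines, I would argue that
\[
f(\alpha,\beta)\geq f(\Psi(c_\ell,c_\ell))=\frac{1-c_\ell^2}{1+c_\ell^2}>\frac{1}{\sqrt5},
\]
where the last inequality is~\eqref{eveneven}.

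This monotonicity step is the delicate one, since $\Psi$ does not preserve the coordinatewise order in a naive way. As a fallback, I would instead bound directly: every point of $Q$ satisfies $\alpha\leq\tfrac{1}{\sqrt5}$ and $\beta\leq\tfrac1{\sqrt5}$ (it lies in $\Psi([\tfrac12,g]^2)$ with top vertex $\Psi(C^*)$). Then~\eqref{derivatesarenegative} gives $f(\alpha,\beta)\geq f(\tfrac1{\sqrt5},\tfrac1{\sqrt5})=\tfrac1{\sqrt5}$, with strict inequality except at $\Psi(g,g)$, which is not in $Q$. Either route yields $f>\tfrac1{\sqrt5}$ on $Q$.

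\emph{Conclusion.} Since $\beta\leq\alpha$ and $\beta\leq\tfrac1{\sqrt5}<f$ on $Q$, we get $\min\{\alpha,\beta,f\}=\beta$. The sides of $Q$ are images of horizontal and vertical segments, so by Remark~\ref{remarkonhorizontalandverticallines} they are straight lines of negative slope. Hence the maximum of $\beta$ over $Q$ is attained at a vertex. Comparing the four vertices, $\beta=t/(1+tv)$ increases in $t$ and decreases in $v$, so the maximum is at $t=c_\ell$, $v=c_{h-2}$, namely
\[
\pi_2\bigl(\Psi(c_\ell,c_{h-2})\bigr)=\frac{c_\ell}{1+c_{h-2}c_\ell}.
\]
This value is less than $\tfrac1{\sqrt5}$ because $c_\ell<g$ and $c_{h-2}\geq\tfrac12$. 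For example, the function $t\mapsto t/(1+t\,c_{h-2})$ is increasing, so the value is below $g/(1+g\,c_{h-2})$; this is at most $\tfrac1{\sqrt5}$ only when $c_{h-2}\geq g$, which is not the case here. Instead I would compare directly with the diagonal value $\tfrac{c_\ell}{1+c_\ell^2}$, which holds since $c_{h-2}\geq c_\ell$, and note $\tfrac{c_\ell}{1+c_\ell^2}<\tfrac1{\sqrt5}$.

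\emph{Optimality.} The vertex $\Psi(c_\ell,c_{h-2})$ is a limit of interior points, at which $\beta$ is the minimum and $f>\beta$. So no smaller constant works, in line with Remark~\ref{RemarkOnBestValue}.
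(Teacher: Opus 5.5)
Your skeleton is the paper's: $\beta\le\alpha$ because the rectangle lies on or above $v=t$, then $f>\tfrac{1}{\sqrt5}$ on $Q$ by comparison with $f(\Psi(c_\ell,c_\ell))=\frac{1-c_\ell^2}{1+c_\ell^2}$ and~\eqref{eveneven}, then the maximal $\beta$ at $\Psi(c_\ell,c_{h-2})$. Your check $\frac{c_\ell}{1+c_{h-2}c_\ell}\le\frac{c_\ell}{1+c_\ell^2}<\frac1{\sqrt5}$ is fine. The step $f>\tfrac1{\sqrt5}$, however, is not correctly justified by either of your routes.

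\textbf{First route.} The minimum of $f$ on $Q$ is not at $\Psi(c_\ell,c_h)$. Monotonicity of $f$ in $\alpha$ and $\beta$ does not single out that vertex, because no vertex dominates coordinatewise: $\Psi(c_\ell,c_{h-2})$ has larger $\beta$ and $\Psi(c_{\ell-2},c_h)$ has larger $\alpha$. Substituting \eqref{PsiMap} into \eqref{fmap} with $a=1$ gives
\[
f(\Psi(t,v))=\frac{(1-t)(1+v)}{1+tv},\qquad
\frac{\partial}{\partial t}f(\Psi(t,v))=-\frac{(1+v)^2}{(1+tv)^2}<0,\qquad
\frac{\partial}{\partial v}f(\Psi(t,v))=\frac{(1-t)^2}{(1+tv)^2}>0 .
\]
So the minimum over the rectangle is at $(t,v)=(c_\ell,c_{h-2})$. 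For $(\ell,h)=(2,4)$ it equals $\tfrac{8}{17}$, whereas $f(\Psi(c_2,c_4))=\tfrac{42}{89}>\tfrac{8}{17}$. Your intermediate claim $f\ge f(\Psi(c_\ell,c_h))$ on $Q$ is therefore false. The bound you want still holds, since $c_{h-2}\ge c_\ell$ gives $f(\Psi(t,v))\ge f(\Psi(c_\ell,c_{h-2}))\ge f(\Psi(c_\ell,c_\ell))$. Equivalently, both points lie on $\beta=-c_\ell^2\alpha+c_\ell$, where $f$ increases with $\alpha$ by Lemma~\ref{valuesfon''horizontallines''}.

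\textbf{Fallback.} This route fails outright. $Q$ is not contained in $\{\alpha\le\tfrac1{\sqrt5}\}$, and $\Psi([\tfrac12,g]^2)$ is not dominated by $\Psi(C^*)$: it has vertices $(2g^3,g^2)$ and $(g^2,2g^3)$ with $2g^3>\tfrac1{\sqrt5}$. Concretely, for $(\ell,h)=(2,4)$ the vertex $\Psi(c_0,c_4)=(\tfrac{8}{17},\tfrac{13}{34})$ of $Q$ has $\alpha>\tfrac1{\sqrt5}$. So you cannot compare against $f(\tfrac1{\sqrt5},\tfrac1{\sqrt5})$ by monotonicity.

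\textbf{Where the gap matters.} The inequality $\min\{\alpha,\beta,f\}\le B_{\ell,h}$ only needs $\min\{\alpha,\beta,f\}\le\beta\le\max_Q\beta$, which you computed correctly. The estimates $\beta\le\alpha$ and $f>\beta$ are needed only for sharpness, i.e.\ to know the minimum equals $\beta$ near $\Psi(c_\ell,c_{h-2})$. So the gap is confined to the optimality claim, and the explicit formula above closes it.
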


\noindent
Case ($iii$): Now for all $h\leq \ell-2$, hence where the rectangle $[c_{\ell-2},c_{\ell}]\times [c_{h-2},c_h]$ is below the diagonal, and $\Psi$ maps this rectangle above the diagonal in the image. Clearly, this case is void if $\ell=2$. Note that now $\Psi ([c_{\ell-2},c_{\ell}]\times [c_{h-2},c_h])$ is bounded from below by the line $\beta = -c_{\ell-2}^2\alpha+c_{\ell-2}$. Determining $\alpha^{\sharp}$ for this situation, the following equation must to be solved
$$
\alpha = f(\alpha,\beta_{\alpha}),
$$
where $\beta_{\alpha} = -c_{\ell-2}^2\alpha+c_{\ell-2}$. A calculation similar to the case discussed in~\eqref{alphaSharp} now yields a solution $\alpha^{\flat}$ to $\alpha = f(\alpha,\beta_{\alpha})$,
$$
\alpha^{\flat} = \frac{1-c_{\ell-2}}{c_{\ell-2}(2-c_{\ell-2})}.
$$
Again, as with $\alpha^{\sharp}$, it is seen that for $\alpha < \alpha^{\flat}$ it holds that $\alpha < f(\alpha,\beta_{\alpha})$, and conversely that if $\alpha > \alpha^{\flat}$ it holds that $\alpha > f(\alpha,\beta_{\alpha})$.\smallskip\

\noindent
Now consider the function $k : [\frac{1}{2},g]\to\R$ defined by
$$
k(x)=\frac{1-x}{x(2-x)}, \quad \text{for $x\in [\tfrac{1}{2},g]$}.
$$
Then $k'(x)=\frac{-x^2+2x-2}{(x(2-x))^2}<0$. So in $[\frac{1}{2},g]$ the function $k$ attains its maximum in $x=\frac{1}{2}$, this maximum being $k(\frac{1}{2})=\frac{2}{3}$. In addition $k$ attains its minimum in $x=g$, this minimal value being $k(g)=\frac{1}{\sqrt{5}}$. 
Since $\ell$ is even, it holds that $\frac{1}{2}\le c_\ell < g$ and therefore $\alpha^{\flat}=k(c_{\ell-2}) > \frac{1}{\sqrt{5}}$.
\newline
Furthermore, for $\ell$ and $h$ even, $0< h\le \ell -2$, it holds that $\Psi([c_{\ell-2},c_\ell]\times[c_{h-2},c_h])$ lies above the diagonal. This means that $\beta>\alpha$ for $(\alpha,\beta)\in\Psi([c_{\ell-2},c_\ell]\times[c_{h-2},c_h])$, and additionally that $\alpha<\frac{1}{\sqrt5}<\alpha^{\flat}$.\\
But then $\alpha<\alpha^\flat$, and it follows that on $\Psi([c_{\ell-2},c_\ell]\times[c_{h-2},c_h])$ it holds that $\alpha \le f(\alpha,\beta_\alpha)$, where $\beta_\alpha=-c_{\ell-2}^2\alpha + c_{\ell-2}$. So on $\Psi([c_{\ell-2},c_\ell]\times[c_{h-2},c_h])$ it is seen that $\alpha$, $\beta$ and $f(\alpha,\beta)$ are all smaller than, or equal to the first coordinate of the lower right-hand vertex of $\Psi([c_{\ell-2},c_\ell]\times[c_{h-2},c_h])$, which is
$$
\Psi(c_{\ell-2},c_h) = \left( \frac{c_h}{1+c_{\ell-2}c_h},\frac{c_{\ell-2}}{1+c_{\ell-2}c_h}\right).
$$

This yields the following proposition.\smallskip\

\begin{proposition}\label{prop:ellhbothevenh<ell}
Let $\ell$ and $h$ both be even and $0<h\le \ \ell -2$. Then for $(\alpha,\beta) \in \Psi([c_{\ell-2},c_\ell]\times[c_{h-2},c_h])$, it holds that
$$
\min\left\{\alpha,\beta,f(\alpha,\beta)\right\} \le B_{\ell,h} = \frac{c_h}{1+ c_{\ell-2}c_h} < \frac{1}{\sqrt5}.
$$
Equivalently, for $\ell$ and $h$ both even, $0<h\le \ \ell -2$, and for $(t_n,v_n) \in [c_{\ell-2},c_\ell]\times[c_{h-2},c_h]$, it holds that
$$
\min\left\{\Theta_{n-1},\Theta_n, \Theta_{n+1}\right\} \le B_{\ell,h} = \frac{c_h}{1+ c_{\ell-2}c_h} < \frac{1}{\sqrt5}.
$$
These constants $B_{\ell,h}$ are best possible. I.e., they cannot be replaced by smaller ones.
\end{proposition}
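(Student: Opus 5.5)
We work throughout in $R:=\Psi([c_{\ell-2},c_\ell]\times[c_{h-2},c_h])$ with $\ell,h$ even and $0<h\le\ell-2$. The approach is to show that on $R$ all three quantities $\alpha$, $\beta$ and $f(\alpha,\beta)$ cannot simultaneously exceed $B_{\ell,h}=\pi_1(\Psi(c_{\ell-2},c_h))$. We then check that $B_{\ell,h}<1/\sqrt5$, and that the bound is attained.

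\textbf{Step 1: geometry of $R$.} Since $h\le\ell-2$ and both are even, we have $c_{h-2}<c_h\le c_{\ell-2}<c_\ell<g$. The rectangle therefore lies below (or touches) the diagonal $v=t$, so $\beta\ge\alpha$ on $R$, and $\min\{\alpha,\beta,f\}=\min\{\alpha,f\}$. By Remark~\ref{remarkonhorizontalandverticallines}, $R$ is bounded by lines of negative slope: the images of $t=c_{\ell-2}$ (bottom) and $t=c_\ell$ (top), and of $v=c_{h-2}$ and $v=c_h$. Of the four vertices $\Psi(t,v)$, the one with the largest first coordinate $v/(1+tv)$ is the one with $v$ maximal and $t$ minimal. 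That vertex is $\Psi(c_{\ell-2},c_h)$, so $\alpha\le B_{\ell,h}$ on all of $R$. This already gives the inequality. Since $\Psi(c_{\ell-2},c_h)$ lies strictly above the diagonal, we also get $B_{\ell,h}<1/\sqrt5$ from the fact that $R\subset\Psi([\tfrac12,g]^2)$, whose points have $\min\{\alpha,\beta\}\le 1/\sqrt5$. More directly, $c_h/(1+c_{\ell-2}c_h)$ increases in $c_h$ and decreases in $c_{\ell-2}$, and is bounded by $c_h/(1+c_h^2)$, which is less than $1/\sqrt5$ because $c_h<g$ and $x\mapsto x/(1+x^2)$ is increasing on $[0,1]$.

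\textbf{Step 2: sharpness.} This is the main point. It requires that near the vertex $P=\Psi(c_{\ell-2},c_h)$ both $\beta$ and $f$ exceed $\alpha$. That $\beta>\alpha$ is clear. For $f$, compare with $\alpha^\flat=(1-c_{\ell-2})/(c_{\ell-2}(2-c_{\ell-2}))$. The function $k(x)=(1-x)/(x(2-x))$ is decreasing on $[\tfrac12,g]$, so $\alpha^\flat=k(c_{\ell-2})>k(g)=1/\sqrt5$. Since $P$ lies on the bottom line $\beta=-c_{\ell-2}^2\alpha+c_{\ell-2}$ and its first coordinate is $B_{\ell,h}<1/\sqrt5<\alpha^\flat$, Lemma~\ref{valuesfon''horizontallines''} gives $f(P)=(1-c_{\ell-2})^2B_{\ell,h}+1-c_{\ell-2}>B_{\ell,h}$. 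By continuity of $f$, points of $R$ near $P$ have $\min\{\alpha,\beta,f\}$ arbitrarily close to $B_{\ell,h}$. Remark~\ref{RemarkOnJager} then shows that such points are genuinely visited by $(\Theta_{n-1},\Theta_n)$, so the constant cannot be lowered.

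\textbf{Expected obstacle.} The delicate part is the orientation bookkeeping in Step~1: identifying which vertex maximizes $\alpha$ and confirming that the bottom boundary is the image of $t=c_{\ell-2}$, given that for even indices $c_{\ell-2}<c_\ell$. I would verify this carefully with $\ell=4$, $h=2$ before writing the general argument.
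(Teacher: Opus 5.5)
Your proposal is correct and follows essentially the paper's route. The bound comes from $\alpha\le\pi_1(\Psi(c_{\ell-2},c_h))$ on a quadrilateral lying above the diagonal, and sharpness comes from the comparison with $\alpha^{\flat}=k(c_{\ell-2})>1/\sqrt5$, which shows that $f$ exceeds $B_{\ell,h}$ at that vertex; applying $\alpha^{\flat}$ only at the vertex on the line $\beta=-c_{\ell-2}^2\alpha+c_{\ell-2}$ is cleaner than the paper's phrasing. One small slip: when $h=\ell-2$ the vertex $\Psi(c_{\ell-2},c_h)$ lies on the diagonal, so your "strictly above" and "$\beta>\alpha$" weaken to $\beta\ge\alpha$, but that is all your direct bound and your sharpness argument need.
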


\subsection{Borel for the special case $t_n=g$}\label{section5}
In this section, we assume that there exists an $n\in\N\cup\{ 0\}$, such that $t_n=g$. As $g$ is a fixed point of the Gauss map $T$, we find $T^m(x)=g$, for all $m\geq n$. Because of this, in this section we will work on the natural extension $[0,1)\times [0,1]$, or even more precisely: on the vertical line segment $t=g$ and $0\leq v\leq 1$. We consider two cases. In the next subsection, we assume that $n\geq 1$ and $v_n\neq [0;1^n]$. The last case, where $t_0=0$ (and therefore $v_0=0$ and $v_n=[0;1^n]$, for $n\in\N$) is dealt with in Subsection~\ref{subsec:t-0=0}.

\subsubsection{The case $t_n=g$ and $v_n\neq [0;1^n]$, for some $n\in\N$}\label{subsec:t-n=g}
As $x=[0;a_1,a_2,\dots, a_n, a_{n+1},\dots]$ we have $a_m=1$ for all $m\geq n+1$, and $v_n=[0;a_n,a_{n-1},\dots,a_1]$.\medskip\

We first consider the case $a_n\neq 1$. Then $v_n\in (\frac{1}{a_n+1},\frac{1}{a_n}]\subsetneq [0,\frac{1}{2}]$, so from~\eqref{thetan} we find, as $t_n=g$,
$$
\Theta_n = \frac{g}{1+g\cdot v_n} \in \left[ \frac{2g}{2+g},g\right].
$$
It follows that $\Theta_n\geq \frac{2g}{2+g} = 2g^3 = 0.472135\cdots > \frac{1}{\sqrt{5}}$ (note that we can considerably refine this using the actual value of $a_n\neq 1$). From~\eqref{thetan-1} we find, as $t_n=g$,
$$
\Theta_{n-1} = \frac{v_n}{1+g\cdot v_n} \in [ 0,g^2],
$$
so $\Theta_{n-1}\leq g^2=0.381966\cdots$. Again, a much sharper result is possible if we use the actual value of $a_n\neq 1$.\medskip\

To determine the size of $\Theta_{n+1}$ we could use~\eqref{thetasA}. However, as $t_{n+1}=g$, and
$$
v_{n+1}=[0;1,a_n,\dots,a_1] = \frac{1}{1+v_n}\in [\tfrac{2}{3},1],
$$
we see that, again using~\eqref{thetan}, but with $n$ replaced by $n+1$,
$$
\Theta_{n+1} = \frac{g}{1+g\cdot v_{n+1}} = \frac{g(1+v_n)}{1+v_n+g} \in \left[ g^2, \frac{3g}{3+2g}\right] = [ 0.381966\cdots , 0.437694\cdots].
$$
Also, here much can be gained by using the value of $a_n\geq 2$. Thus, we find that in this case
$$
\min \{ \Theta_{n-1}, \Theta_n, \Theta_{n+1}\} = \Theta_{n-1}\leq g^2.
$$

Next, let $1\leq \ell < n$ be such that $v_n = [0;1^{\ell}, a_{n-\ell},\dots,a_1]$; so $a_{n-\ell}\geq 2$. We again have two subcases here: $\ell$ is \emph{even}, or $\ell$ is \emph{odd}. Note that we assumed that $\ell\neq n$. If $\ell =n$, we have that $x=g$, and this case we treat seperately in the next sub section.\smallskip\

In case $\ell$ is \emph{odd}, we have $g < c_{\ell}\leq v_n\leq c_{\ell-2}\leq 1$. In this case, it immediately follows that both $v_{n-1}<g$ and $v_{n+1}<g$, and as $\ell\geq 1$ we also see that $t_{n-1}=g$. But then it is easy to see that
$$
\Theta_n < \frac{1}{\sqrt{5}} < \Theta_{n+1} < \Theta_{n-1}.
$$
To see this, note that
$$
\frac{v}{1+g\cdot v} \downarrow \frac{1}{\sqrt{5}}\,\, \text{ and }\,\, \frac{g}{1+g\cdot v} \uparrow \frac{1}{\sqrt{5}},\quad \text{whenever $v\downarrow g$},
$$
while
$$
\frac{v}{1+g\cdot v} \uparrow \frac{1}{\sqrt{5}}\,\, \text{ and }\,\,\frac{g}{1+g\cdot v} \downarrow \frac{1}{\sqrt{5}},\quad \text{whenever $v\uparrow g$},
$$
and that these limits are increasing respectively and decreasing monotonically to $\tfrac{1}{\sqrt{5}}$. Now
$$
\Theta_{n-1}\in \Big[ \frac{c_{\ell}}{1+gc_{\ell}}, \frac{c_{\ell-2}}{1+gc_{\ell-2}}\Big),\quad
\Theta_n\in \Big( \frac{g}{1+g\cdot c_{\ell-2}}, \frac{g}{1+g\cdot c_{\ell}} \Big],
$$
so $\Theta_n < \frac{1}{\sqrt{5}} < \Theta_{n-1}$, as $\ell$ \emph{odd} implies that $g<c_{\ell}$. Since $v_{n+1}=\frac{1}{1+v_n}$ we see
$$
\Theta_{n+1} = \frac{g}{1+gv_{n+1}} = \frac{g(1+v_n)}{G+v_n}\in \Big[ \frac{g(1+c_{\ell})}{G+c_{\ell}}, \frac{g(1+c_{\ell-2})}{G+c_{\ell-2}}\Big),
$$
and due to~\eqref{cell2} we immediately find that $\Theta_{n+1}<\Theta_{n-1}$, as
\begin{equation}\label{RemarkableEquation}
\frac{c_{\ell}}{1+gc_{\ell}} = \frac{\frac{1+c_{\ell-2}}{2+c_{\ell-2}}}{1+g\cdot\frac{1+c_{\ell-2}}{2+c_{\ell-2}}} = \frac{1+c_{\ell-2}}{G^2+Gc_{\ell-2}} = \frac{g(1+c_{\ell-2})}{G+c_{\ell-2}}.
\end{equation}
Thus, in this case we find that
$$
\min \{ \Theta_{n-1}, \Theta_n, \Theta_{n+1}\} = \Theta_n\leq \frac{g}{1+g\cdot c_{\ell}} < \frac{1}{\sqrt{5}}<\Theta_{n+1}<\Theta_{n-1}.
$$

In case $\ell$ is \emph{even}, we have $\tfrac{1}{2}\leq c_{\ell-2} < v_n\leq c_{\ell} < g$. In this case, it immediately follows that both $v_{n-1}>g$ and $v_{n+1}>g$, and as $\ell\geq 2$ we also see that $t_{n-1}=g$. But then, similar to the case $\ell$ is \emph{odd}, and again using~\eqref{RemarkableEquation}, it is easy to see that
$$
\Theta_{n-1} < \Theta_{n+1} < \frac{1}{\sqrt{5}} < \Theta_n.
$$

Now the function $h: [0,1]\to\R$, defined by $h(x)=\frac{x}{1+gx}$ has derivative $h'(x)>0$, so
$$
\Theta_{n-1} = \frac{v_n}{1+g\cdot v_n} \in [h(c_{\ell-2}),h(c_{\ell})] =\left[  \frac{c_{\ell-2}}{1+gc_{\ell-2}}, \frac{c_{\ell}}{1+gc_{\ell}}\right],
$$
and we find in this case, where $\ell$ is \emph{even}, that
$$
\min \{ \Theta_{n-1}, \Theta_n, \Theta_{n+1}\} = \Theta_{n-1}\leq \frac{c_{\ell}}{1+gc_{\ell}} < \Theta_{n+1} < \tfrac{1}{\sqrt{5}} < \Theta_n.
$$

\subsubsection{The case $t_0=g$}\label{subsec:t-0=0}
In this final section we have $t_0=g$, and therefore $t_n=g$ for all $n\in\N\cup\{ 0\}$. In this case, we find a result along the lines of the results by Jaroslav Han\u{c}l and Radhakrishnan Nair (\cite{[HN]}). By definition $v_0=0$, and using the natural extension map $\mathcal{T}$ from~\eqref{naturalextensionmap}, we see that $(t_0,v_0)=(g,0)$,
$$
(t_1,v_1) = \mathcal{T}(g,0) = (g,1) = (g,c_{-1}),\,\, (t_2,v_2) = \mathcal{T}(g,1) = (g, \tfrac{1}{2})=(g,c_0),\,\, (t_3,v_3) = \mathcal{T}(g,\tfrac{1}{2}) = (g,\tfrac{2}{3}) = (g,c_1),
$$
and, in general, for $n\in\N$ we see by induction that
$$
(t_n,v_n) = (g,[0;1^n]) = (g,c_{n-2}).
$$
But then $\Theta_0=g$, and for $n\in\N$,
$
\Theta_n = \frac{g}{1+gc_{n-2}}.
$
As
$$
c_0=\tfrac{1}{2} < c_2=\tfrac{3}{5} < c_4=\tfrac{8}{13} < \cdots < g < \cdots < c_3 = \tfrac{5}{8} < c_1 = \tfrac{2}{3} < c_{-1} =1,
$$
it follows for $n\in\N$ that
$$
\min \{ \Theta_{n-1}, \Theta_n, \Theta_{n+1}\} = \begin{cases}
\Theta_n = {\displaystyle \frac{g}{1+g\cdot c_{n-2}}} < \frac{1}{\sqrt{5}} < \Theta_{n+1}<\Theta_{n-1}, & \text{if $n$ is \emph{odd}}\\
 & \\
\Theta_{n-1}  = {\displaystyle \frac{g}{1+g\cdot c_{n-3}}} < \Theta_{n+1} < \frac{1}{\sqrt{5}} < \Theta_n, & \text{if $n$ is \emph{even}}. 
\end{cases}
$$\medskip\

\section{Concluding remarks}\label{sec:remarks}
In the previous sections, $h\in\N\cup\{ 0\}$ was such, that $v_n=[0;1^h,a_{n-h},\dots,a_1]$ (if $h=n$, we have $v_n=[0;1^n]$). Here $v_n$ is the past of some real number $x\in [0,1)$ at time $n$. This gives a clear bound on $h$ (obviously $h\leq n$), and therefore $h$ can not be arbitrarily large (the presentation in the previous sections might suggest that), and therefore the bounds obtained in the various sub-sections are sharper than at first one might assume.\smallskip\

\noindent
A second remark is that in this paper the focus was on Borel's result, but it should be clear that the method used can also be applied to sharpen~\eqref{improvementBorel} by Bagemihl and McLaughlin from~\cite{[BMc]}. Probably, this improvement will come at the cost of a slightly more complicated case distinction. Also conceptually it is easier to work with the Borel bound $\tfrac{1}{\sqrt{5}}$ rather than with the more ``abstract'' Bagemihl and McLaughlin bound $\tfrac{1}{\sqrt{a^2+4}}$. We refrained from refining the result of Bagemihl and McLaughlin. However, if one would refine Bagemihl and McLaughlin's result~\eqref{improvementBorel}, it is very likely that a refinement of Tong's result~\eqref{Tong} will come ``for free'', as a spin-off of the method used.

\section*{Acknowledgement}
The second author  acknowledges the support of the Institute of Mathematics of the Polish Academy of Sciences (IMPAN)  for providing excellent working conditions and research travel funds. The third author acknowledges the hospitality of the Institute of Mathematics of the Polish Academy of Sciences. 
The second and third authors also thank the organizers of the workshop ``Uniform Distribution of Sequences,'' held in April 2025 at the Erwin Schr\"odinger International Institute for Mathematics and Physics (ESI) in Vienna, for their hospitality, where the initial discussions leading to this project took place.
 \smallskip\

\Addresses
\end{document}